\newcounter{thm}
\newtheorem{theorem}[thm]{Theorem}
\newtheorem*{theorem*}{Theorem}
\newtheorem*{maintheoremThm}{Main Theorem}
\newtheorem{lemma}[thm]{Lemma}
\newtheorem*{conjecture*}{Conjecture}
\theoremstyle{remark}
\newtheorem{remark}[thm]{Remark}
\newtheorem*{remark*}{Remark}
\theoremstyle{definition}
\newtheorem*{definition*}{Definition}
\newtheorem{definition}[thm]{Definition}
\renewcommand{\Re}{\mathop{\mathrm{Re}}}
\renewcommand{\Im}{\mathop{\mathrm{Im}}}
\newcommand{\eps}{\varepsilon}
\newcommand{\bbC}{\mathbb C}
\newcommand{\bbR}{\mathbb R}
\newcommand{\bbQ}{\mathbb Q}
\newcommand{\bbH}{\mathbb H}
\newcommand{\bbN}{\mathbb N}
\newcommand{\bbZ}{\mathbb Z}
\newcommand{\rot}{\mathrm {rot}\,}
\DeclareMathOperator{\dist}{dist}
\title{Self-similarity of bubbles}
  \subjclass[2010]{37E10, 37E45}
 \keywords{complex tori, rotation numbers, diffeomorphisms of the circle}
\author{Nataliya Goncharuk}
\email{ng432@cornell.edu}
\address{%
Cornell University\\
Department of Mathematics\\
310 Malott Hall\\
Ithaca, NY 14853-4201 USA}
\thanks{Supported by RFBR project 16-01-00748-a and Laboratory Poncelet.}
\thanks{Cornell University, Department of Mathematics}
\authors,
\begin{document}

\begin{abstract}
Bubbles is a fractal-like set related to a circle diffeomorphism; they are a complex analogue to Arnold tongues. In this article, we prove an approximate self-similarity of bubbles.
\end{abstract}
\maketitle

\section{Introduction}

\subsection{Complex rotation numbers. Arnold's construction}
In what follows,  $f\colon \bbR/\bbZ \to \bbR/\bbZ$ is an analytic orientation-preserving circle diffeomorphism. Let $F\colon \bbR\to\bbR$ be a lift of $f$ to the real line.

In 1978, V.Arnold \cite[Sec. 27]{Arn-english} suggested the following construction. Given $\omega\in \bbH$ and a small positive $\eps \in\bbR$, consider the strip
$$
   \Pi^{\eps}:=\{z \in \bbC \mid -\eps <\Im z < \Im\omega+\eps\}.\\
$$
Extend $F$ analytically to the $\eps$-neighborhood of the real axis; the analytic extensions of $f$ and $F$  are still denoted by $f$ and $F$ respectively. Put
$$E(F+\omega):= \Pi^{\eps} / (z\sim z+1, z \sim F(z)+\omega).
$$
For a small $\eps$, the quotient space $E(F+\omega)$ is a torus, it inherits the complex structure from $\bbC$ and does not depend on $\eps$.

The complex torus $E(F+\omega)$ has two naturally distinguished generators of the first homology group, namely
$\bbR/\bbZ$ and the class of $[0, F(0)+\omega]$. Thus  the modulus  of $E(F+\omega)$ is well-defined: for a unique   $\tau$ in the upper half-plane, there exists a biholomorphism
\begin{equation}
\label{eq-Hw}
 H_{\omega} \colon E(F+{\omega}) \to \bbC / (\bbZ+\tau \bbZ)
\end{equation}
that takes the first generator of $E(F+\omega)$ to   the class of $\bbR/\bbZ$, and the second generator to the class of $\tau\bbR/\tau\bbZ$.

Here and below $\bbH, \bbH\subset \bbC$, stands for the open upper half-plane.
\begin{definition}

The   modulus of the complex torus $E(F+{\omega})$ is called the \emph{complex rotation number} of $F+{\omega}$ and denoted by $\tau({F+\omega}):=\tau \in \bbH$.

\end{definition}
 We  also use the notation $\tau_F(\omega) := \tau(F+\omega)$ if we want to stress the dependence on $\omega$.

The term ``complex rotation number'' is due to E. Risler, \cite{Ris}.
Complex rotation number $\tau(F+\omega)$ depends holomorphically on $\omega\in \bbH$, see \cite[Sec. 2.1, Proposition 2]{Ris}.
\subsection{Dependence on the lift}
\label{sec-lift}
The second generator of $E(F+\omega)$, and thus the complex rotation number $\tau(F+\omega)$, depends on the choice of a lift $F$ of the circle diffeomorphism $f$. Namely, $\tau(F+\omega+1) =\tau(F+\omega)+1$. So the class of $\tau(F+\omega)$ in $\bbH/\bbZ$ depends on $f$ and  $\omega\in\bbH/\bbZ$ only. We denote it by $$\tau(f+\omega):=\tau(F+\omega) \mod 1,\quad \tau(f+\omega) \in \bbH/\bbZ.$$ We also use the notation $\tau_f(\omega):=\tau(f+\omega)$. Clearly, $\tau_F$ is a lift of $\tau_f\colon \bbH/\bbZ \to \bbH/\bbZ$ to $\bbH$.

\subsection{Rotation numbers of circle diffeomorphisms}
Here we list some well-known facts about rotation numbers, see \cite[Sections 3.11, 3.12]{KaHa} for the proofs.
\begin{definition}
For a circle diffeomorphism $f$, let $F$ be a lift of $f$ to the real line. The \emph{rotation  number} of $F$ is
 \begin{equation}
 \label{eq-rot}
 \rot F := \lim_{n\to \infty} \frac {F^n(x)}{n}.
 \end{equation}
 The limit in \eqref{eq-rot} exists and does not depend on $x$.
 The rotation number of a circle diffeomorphism $f$ is $\rot f :=\rot F \mod 1, \rot f \in \bbR/\bbZ$; it does not depend on the choice of the lift.
\end{definition}
The rotation number is invariant under continuous conjugacies; it is rational if and only if $f$ has periodic orbits. If $\rot f $ is irrational, then all the orbits of $f$ on the circle are ordered in the same way as the orbits of the irrational rotation $T_{\rot f} \colon x \mapsto x+\rot f$ on the circle.

\subsection{Extension of the complex rotation number to the real axis}
 Recall that a periodic orbit of a circle diffeomorphism is called \emph{parabolic} if its multiplier is one, and \emph{hyperbolic} otherwise. A diffeomorphism with periodic orbits such that all of them are hyperbolic is called \emph{hyperbolic}.

The question due to Arnold (see \cite[Sec. 27]{Arn-english}) was to investigate the complex rotation number $\tau(F+\omega)$ as $\omega$ approaches the real axis. He conjectured that if for a real $\omega_0$, $\rot(F+\omega_0)$ is Diophantine, then $\lim_{\omega\to \omega_0}\tau(F+\omega) = \rot(F+\omega_0)$. Two independent proofs of this conjecture were given in \cite{Ris} and \cite{M-en}. This statement does not hold if $f+{\omega_0}$ is hyperbolic, as was proved in \cite{YuIM}; this result was strengthened in \cite{NG2012-en}. The case of a diffeomorphism with parabolic cycles  was studied by J.Lacroix (unpublished) and in \cite{NG2012-en}.

The following result gives the description of the limit behaviour of $\tau_f$ near the real line, including  the case when $\rot(f+\omega_0)$ is a Liouville number.  The analyticity of $\bar \tau_f$ in the last subcase below is not included in  \cite[Main theorem]{XB_NG}, but it constitutes  \cite[Theorem 1.2]{NG2012-en}, see also \cite[Theorem 2]{XB_NG}.

\begin{theorem}[X. Buff, N. Goncharuk \cite{XB_NG}]
\label{th-XB_NG}
Let $f\colon \bbR/\bbZ \to \bbR/\bbZ$ be an orientation-preserving analytic circle diffeomorphism.  Then the holomorphic function $\tau_f \colon \bbH/\bbZ \to \bbH/\bbZ $ has a continuous extension $\bar \tau_f $ to $ \overline{\bbH}/\bbZ $. Assume $\omega \in \bbR/\bbZ $.
\begin{enumerate}
 \item If $\rot (f+{\omega})$ is irrational, then $\bar \tau_f(\omega)=\rot(f+{\omega})$.
 \item If $\rot (f+{\omega})$ is rational and $f+{\omega}$ has a parabolic periodic orbit, then again $\bar \tau_f(\omega)=\rot (f+{\omega})$.
 \item If $f+{\omega}$ is hyperbolic with rotation number $p/q$  on an open interval  $\omega\in I\subset \bbR/\bbZ $, then $\bar \tau_f$ is analytic on $I$ and $\bar \tau_f(\omega)\in \bbH/\bbZ $ for $\omega \in I$.

 Moreover, $\bar \tau_f(\omega)$ belongs to the closed disk of radius $D_f/(4\pi q^2)$ tangent to $\bbR/\bbZ$ at $p/q$. Here $D_{f} := \int_{\bbR/\bbZ} \left|\frac{f''(x)}{f'(x)}\right| dx$ is the distortion of $f$, and $p/q$ is assumed to be irreducible.
\end{enumerate}
\end{theorem}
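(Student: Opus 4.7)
The plan is to treat the three cases separately, since they correspond to qualitatively different dynamics of $f+\omega_0$ on the real axis. In each case the goal is to identify $\bar\tau_f(\omega_0)$ explicitly and to show that $\tau_f(\omega)\to\bar\tau_f(\omega_0)$ as $\omega\to\omega_0$ from $\bbH/\bbZ$. Since $\tau_f$ is already holomorphic in the interior by Risler, it suffices to construct the boundary extension and verify continuity at each real point.

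I would start with the hyperbolic case (3), which is the most explicit. On the interval $I$ where $f+\omega$ has a hyperbolic $p/q$-cycle, the equation $(F+\omega)^q(x)-p=x$ has an attracting solution $a(\omega)$ with multiplier $\mu(\omega)=\bigl((F+\omega)^q\bigr)'(a(\omega))$, $|\mu(\omega)|<1$; both extend analytically to a complex neighborhood of $I$ by the implicit function theorem. Koenigs linearization near the attracting cycle conjugates the return map to $w\mapsto \mu w$, and reading off the two generators of $E(F+\omega)$ in this linearizing coordinate yields the formula
\[
\tau_f(\omega) \equiv \frac{p}{q} + \frac{\log\mu(\omega)}{2\pi i q} \pmod{\bbZ},
\]
with the branch of $\log$ chosen so that $\Im\log\mu<0$, forcing $\Im\tau>0$. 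The right-hand side extends analytically across $I$ and therefore supplies $\bar\tau_f$ there. For the disk estimate, I would bound $|\log|\mu||$ by the distortion: since the $q$ points of the cycle are ordered on the circle as the orbit of $T_{p/q}$ and are thus $O(1/q)$-spaced, summing $\log f'(a_i)$ around the cycle and comparing with $\int |f''/f'|$ gives a bound of order $D_f/q$, and the factor $1/(2\pi q)$ in the formula produces the stated radius $D_f/(4\pi q^2)$.

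For parts (1) and (2), I would follow the Risler–Moldavskis strategy of comparing $E(F+\omega)$ with the flat torus of the rigid rotation $T_{\rot(f+\omega_0)}$. If $\rot(f+\omega_0)$ is Diophantine, Herman–Yoccoz provides an analytic circle diffeomorphism $h$ that conjugates $f+\omega_0$ to $T_{\rot(f+\omega_0)}$ and extends holomorphically to a strip; then $h$ identifies $E(F+\omega_0)$ with $\bbC/(\bbZ+\rot(f+\omega_0)\bbZ)$, giving $\bar\tau_f(\omega_0)=\rot(f+\omega_0)$, and a perturbation argument propagates the equality to a neighborhood in $\bbH/\bbZ$. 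In the parabolic case (2), Fatou coordinates at each parabolic orbit provide the local substitute for Herman's conjugacy, and the same identification of tori yields the same limit.

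The main obstacle is the Liouville subcase of (1), where no smooth linearization is available near $\omega_0$. The approach has to be indirect: continuity and the equality $\bar\tau_f(\omega)=\rot(f+\omega)$ are already in hand on the dense set of Diophantine points, $\tau_f$ is a bounded holomorphic function on $\bbH/\bbZ$, and $\rot(f+\cdot)$ is continuous on $\bbR/\bbZ$. I would upgrade pointwise agreement on the Diophantine set to the full statement at every irrational $\omega_0$ by combining a boundary-value and normal-families argument with the monotonicity of $\tau_f$ in horizontal slices and an explicit modulus-of-continuity estimate for $\tau_f(\omega)$ as $\Im\omega\to 0$; this is the delicate part of the Buff–Goncharuk proof. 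The analyticity claim in (3) is immediate from the explicit formula above and matches \cite[Theorem 1.2]{NG2012-en}.
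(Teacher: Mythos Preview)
This theorem is not proved in the present paper; it is quoted from \cite{XB_NG}, with the analyticity assertion in case~(3) attributed separately to \cite{NG2012-en}. There is therefore no proof here against which to compare your proposal.

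That said, your treatment of case~(3) contains a genuine error worth flagging. The formula
\[
\tau_f(\omega)\equiv \frac{p}{q} + \frac{\log\mu(\omega)}{2\pi i q}\pmod{\bbZ}
\]
is not correct: the complex rotation number of a hyperbolic circle diffeomorphism is \emph{not} determined by the multiplier of a single attracting cycle. A hyperbolic map with rotation number $p/q$ has in general $m$ attracting and $m$ repelling $q$-cycles, and the torus $\mathcal E(g)$ (whose construction is recalled in Section~\ref{sec-Buff}) depends on all of them. Koenigs linearization at one cycle does not uniformize the entire fundamental annulus $\Pi$, which threads through the basins of all $2mq$ periodic points. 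The paper's own approximate description of $\bar\tau$ (Lemma~\ref{lemma-qctwist}, taken from \cite{XB_NG}) involves a quantity $\sigma(f)$ built from the multipliers and linearizing charts at \emph{every} periodic point, and the disk estimate in case~(3) is obtained in \cite{XB_NG} from that global object, not from a single $\log\mu$. Even in the simplest situation $q=1$, $m=1$, the modulus of $\mathcal E(g)$ depends on both multipliers and on the transition between the two linearizing charts; compare the explicit computation in Remark~\ref{rem-discont}. Your distortion bound also relies on the claim that the $q$ cycle points are $O(1/q)$-spaced, but the combinatorial ordering inherited from $T_{p/q}$ gives no such metric control. Both the explicit formula and the radius argument therefore need the full machinery of \cite{XB_NG} rather than a single Koenigs chart.
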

The number $\bar \tau_f(\omega)$ depends on $f+\omega$ only. In the first two cases of Theorem \ref{th-XB_NG}, this follows from $\bar \tau_f(\omega) = \rot (f+\omega)$. In the third case, this follows from the direct construction of $\bar \tau_f$ presented in Sec. \ref{sec-Buff} and in \cite[Sec. 5]{XB_NG}, see Remark~\ref{rem-depend-on-g+omega} below. This motivates the following notation: $\bar \tau (f+\omega):=\bar \tau_f(\omega)$.

The value of $\bar \tau(f+{\omega})$ at $\omega \in \bbR/\bbZ$ is also called the complex rotation number of $f+\omega$.
So each hyperbolic circle diffemorphism possesses the complex rotation number in the upper half-plane.
The complex rotation number depends analytically on a hyperbolic diffeomorphism, see Remark \ref{rem-analyticity} below.

In this article, we will need the analogous result for $\tau_F(\omega)$ instead of $\tau_f(\omega)$; recall that  $\tau_F$ is the lift of $\tau_f \colon \bbH/\bbZ \to \bbH/\bbZ $ to $\bbH$. The proof of the following result literally repeats the proof of Theorem \ref{th-XB_NG} in \cite{XB_NG}.

\begin{theorem}
\label{th-XB_NG-1}
In assumptions of Theorem \ref{th-XB_NG}, the holomorphic function $\tau_F \colon \bbH \to \bbH $ has a continuous extension $\bar \tau_F $ to $ \overline{\bbH}$. For $\omega \in \bbR$,
\begin{enumerate}
 \item If $\rot (f+{\omega})$ is irrational, then $\bar \tau_F(\omega)=\rot (F+{\omega})$.
 \item \label{it-parab}If $\rot (f+{\omega})$ is rational and $f+{\omega}$ has a parabolic periodic orbit, then again $\bar \tau_F(\omega)=\rot (F+{\omega})$.
 \item \label{it-hyp}If $f+{\omega}$ is hyperbolic with rotation number $p/q$  on an open interval  $\omega\in I\subset \bbR $, then $\bar \tau_F$ is analytic on $I$ and $\bar \tau_F(\omega)\in \bbH$ for $\omega \in I$.

 Moreover, $\bar \tau_F(\omega)$ belongs to the closed disk of radius $D_f/(4\pi q^2)$ tangent to $\bbR$ at $p/q$.
\end{enumerate}
\end{theorem}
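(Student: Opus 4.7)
The plan is to deduce Theorem~\ref{th-XB_NG-1} from Theorem~\ref{th-XB_NG} via the covering $\pi\colon\overline{\bbH}\to\overline{\bbH}/\bbZ$, and then to verify the exact boundary values case by case, tracking integer shifts that a naive lifting argument would leave ambiguous.

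\emph{Step 1 (continuous extension by lifting).} Let $\bar\tau_f\colon \overline{\bbH}/\bbZ \to \overline{\bbH}/\bbZ$ be the continuous extension granted by Theorem~\ref{th-XB_NG}. The composition $\bar\tau_f\circ\pi\colon \overline{\bbH}\to\overline{\bbH}/\bbZ$ is continuous. Since $\overline{\bbH}$ is simply connected and $\pi$ is a covering map, standard covering-space theory produces a continuous lift $\bar\tau_F\colon \overline{\bbH}\to \overline{\bbH}$ satisfying $\pi\circ \bar\tau_F = \bar\tau_f\circ\pi$, unique once its value at one point is prescribed. I fix the lift by demanding $\bar\tau_F|_{\bbH}=\tau_F$; this is consistent because $\tau_F$ is itself a continuous lift of $\tau_f\circ\pi|_{\bbH}$ through~$\pi$.

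\emph{Step 2 (boundary values in cases (1) and (2)).} In these two cases Theorem~\ref{th-XB_NG} asserts $\bar\tau_f(\omega)=\rot(f+\omega)\bmod 1$, so $\bar\tau_F(\omega)\equiv \rot(F+\omega)\pmod{1}$ with an a priori undetermined integer shift. To pin the shift down, I would replay on the level of lifts the quasi-conformal construction of~\cite{Ris} or~\cite{M-en}: for $\omega\in\bbH$ approaching a real $\omega_0$, those proofs produce explicit uniformizations of $E(F+\omega)$ whose moduli, taken as elements of~$\bbH$ rather than~$\bbH/\bbZ$, converge to $\rot(F+\omega_0)$. Equivalently, a single asymptotic anchor is enough: as $\Im\omega\to+\infty$ the torus $E(F+\omega)$ converges (in the sense of complex tori) to $\bbC/(\bbZ+\omega\bbZ)$, forcing $\tau_F(\omega)-\omega\to 0$. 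This identifies the lift on all of~$\bbH$ by uniqueness, and then continuity propagates the correct value $\rot(F+\omega)$ to the boundary.

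\emph{Step 3 (case (3)).} Here I would repeat verbatim the direct construction of $\bar\tau_f$ on hyperbolicity intervals from~\cite[Sec.~5]{XB_NG}, working with the lift $F$ throughout: it yields an analytic continuation of $\tau_F$ across any lifted interval $I\subset\bbR$, with values in $\bbH$. The closed disk of radius $D_f/(4\pi q^2)$ tangent to $\bbR/\bbZ$ at $p/q$ lifts isometrically to a closed disk in $\overline{\bbH}$ tangent to $\bbR$ at the appropriate lift of $p/q$; which lift is singled out by the continuity of $\bar\tau_F$ fixed in Step~1.

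The main obstacle is Step~2: guaranteeing that the covering-theoretic lift selected in Step~1 hits $\rot(F+\omega)$ exactly, rather than a translate by a nonzero integer. Once that is resolved at a single point (conveniently at $i\infty$, via the asymptotics of $\tau_F$ for large imaginary parts), continuity and uniqueness of lifts propagate the correct identification over all of $\overline{\bbH}$, and Steps~1 and~3 then become essentially formal consequences of Theorem~\ref{th-XB_NG}.
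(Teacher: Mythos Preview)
The paper does not give a proof; it only says that ``the proof of the following result literally repeats the proof of Theorem~\ref{th-XB_NG} in~\cite{XB_NG}''. In other words, the intended argument is to redo the entire construction of~\cite{XB_NG} (the torus $\mathcal E(g)$, the quasiconformal estimates, the parabolic limits) with a fixed lift $F$ throughout; those constructions are naturally carried out in $\bbH$ and produce a specific $\tau\in\bbH$, so no integer ambiguity ever arises.

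Your route is genuinely different: you try to deduce the lifted statement formally from the quotient statement. Step~1 and Step~3 are fine, but Step~2 has two real problems. First, the asymptotic $\tau_F(\omega)-\omega\to 0$ is false: for the pure translation $F=T_{1/2}$ one has $E(F+\omega)=\bbC/(\bbZ+(\omega+\tfrac12)\bbZ)$, so $\tau_F(\omega)=\omega+\tfrac12$ identically. In general the limit is a constant depending on $F$, and knowing it only identifies the lift on $\bbH$, which is already fixed by definition; it says nothing about the integer on~$\partial\bbH$. Second, ``continuity propagates'' is not automatic. The set $S\subset\bbR$ on which cases (1)--(2) apply is the complement of the open hyperbolicity intervals, hence disconnected; the defect $h(\omega):=\bar\tau_F(\omega)-\rot(F+\omega)$ is continuous on $\bbR$ and integer-valued on $S$, but could a~priori jump across a hyperbolicity interval $(a,b)$. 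Indeed $\bar\tau_F([a,b])$ lies in $\pi^{-1}(D)$ for the tangent disk $D$ of Theorem~\ref{th-XB_NG}(3), and once $D$ has radius $\ge 1/2$ (i.e.\ $D_f\ge 2\pi q^2$) the lifted disks overlap, so connectedness no longer forces $\bar\tau_F(a)=\bar\tau_F(b)$. Anchoring via Risler/Moldavskis at a single Diophantine point does not by itself bridge these gaps; you would need an additional argument controlling the finitely many small-$q$ plateaus, or else you are back to reproving the boundary behavior from scratch --- which is exactly the paper's approach.
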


We will also use the following estimate on $\bar \tau(F)$.
\begin{lemma}[see {\cite[Lemma 4]{XB_NG}}]
\label{lem-disc-radius}
Assume that $f$ is a hyperbolic map with rational rotation number
$\rot F =p/q$. Then $\bar \tau (F)$ belongs to the disk tangent to $\bbR$ at $p/q$ with radius
$$R=\left(2\pi q \cdot \sum_{x\in Per( f)} \frac{1}{| \log \rho_x|}\right)^{-1}$$
where  $\rho_x$ is the multiplier of $x$ as a fixed point of $f^q$.
\end{lemma}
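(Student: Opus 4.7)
My plan is to translate the disk-inclusion assertion into a conformal modulus inequality and then realize the required lower bound by disjoint sub-annuli coming from the Koenigs linearizations at the periodic points of $f$.

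First I reduce to the case of rotation number zero. Set $G := F^q - p$, a lift of $g = f^q$ with $\rot G = 0$. The $q$-fold unwrapping of $E(F+\omega)$ in the vertical direction is naturally $E(F^q+q\omega) = E(G+q\omega)$, after shifting the vertical generator by $-p$ copies of the horizontal one; hence $\bar\tau(G) = q\,\bar\tau(F) - p$. The affine map $\zeta\mapsto q\zeta-p$ carries the disk of radius $R$ tangent to $\bbR$ at $p/q$ to the disk of radius $qR$ tangent to $\bbR$ at $0$. The M\"obius involution $\zeta\mapsto -1/\zeta$ preserves $\bbH$ and sends this disk to the horizontal half-plane $\{\Im w\ge 1/(2qR)\}$. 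Substituting the value of $R$, the lemma reduces to the modulus inequality
\[
\Im\!\left(-1/\bar\tau(G)\right) \;\ge\; \pi \sum_{x\in\mathrm{Per}(f)} \frac{1}{|\log\rho_x|}.
\]
The left-hand side equals the conformal modulus of the cylinder obtained from $E(G)$ by cutting along a curve in the homotopy class of its vertical generator $[0,G(0)]$.

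The heart of the argument is to exhibit, inside that cylinder, a disjoint family $\{A_x\}_{x\in\mathrm{Per}(f)}$ of embedded sub-annuli whose core curves lie in the homotopy class of the horizontal $\bbR/\bbZ$ cycle and with $\mathrm{mod}(A_x) = \pi/|\log\rho_x|$. For each hyperbolic fixed point $x$ of $g=f^q$ with multiplier $\rho_x$, Koenigs' theorem supplies a local conformal coordinate $\phi_x$ that conjugates $g$ (or $g^{-1}$, in the repelling case) to multiplication by $\rho_x$ (resp.\ $\rho_x^{-1}$). In the logarithmic coordinate $u=\log\phi_x$, the upper half-basin becomes the horizontal strip $\{0<\Im u<\pi\}$ modulo translation by $\log\rho_x$, which is a cylinder of modulus $\pi/|\log\rho_x|$. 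The explicit construction of the limit torus $E(G)$ in the hyperbolic case, described in Section~\ref{sec-Buff} (following \cite[Sec.~5]{XB_NG}), embeds each such local cylinder into the cylinder obtained by cutting $E(G)$; disjointness of the $A_x$ follows from disjointness of the basins of distinct periodic points.

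The conclusion then follows from Gr\"otzsch's inequality: disjoint sub-annuli with parallel core curves in an ambient annulus have total modulus bounded above by the ambient modulus. Applied to $\{A_x\}$ inside our cylinder, this gives
\[
\Im\!\left(-1/\bar\tau(G)\right) \;\ge\; \sum_{x\in\mathrm{Per}(f)} \mathrm{mod}(A_x) \;=\; \pi \sum_{x\in\mathrm{Per}(f)} \frac{1}{|\log\rho_x|},
\]
which is the required inequality. The main obstacle is the middle step: precisely constructing the embeddings $A_x\hookrightarrow E(G)$ with the correct homotopy class and carefully tracking the numerical factors (the $2\pi$ in the Koenigs linearization, the factor $q$ from the $q$-fold cover, and the upper/lower-half-basin bookkeeping arising from the real-analyticity of $g$). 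This rests on the explicit description of the limit torus given in \cite[Sec.~5]{XB_NG}.
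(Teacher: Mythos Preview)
The paper does not prove this lemma; it is quoted from \cite[Lemma~4]{XB_NG} with a pointer to that reference for details. Your Gr\"otzsch strategy---reduce to rotation number zero, reinterpret the disk inclusion as a modulus lower bound, and realize the bound by disjoint Koenigs annuli---is the natural one and is essentially what the cited reference does.

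There is one slip worth correcting. When you cut the torus $\mathcal E(g)$ (with $g=f^q$) along a curve $c$ in the class of the \emph{vertical} generator $[\delta]$ (from $x$ to $G(x)$), the resulting cylinder has core class equal to $[c]=[\delta]$, not the horizontal class $[\gamma]$ as you write: the boundary circles of the cut annulus are copies of $c$, and the core is homotopic to them. Fortunately this is precisely what you need, because the Koenigs annuli you construct also have vertical core class. A radial segment in the Koenigs half-disk at $\alpha_j$ runs from $w$ to $g(w)$; in the quotient $\mathcal E(g)=\Pi_g/g$ this closes up to a loop of class $[\delta]$, not $[\gamma]$. So Gr\"otzsch applies, just with the opposite bookkeeping from what you stated.

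On the embedding and disjointness you flag as the main obstacle: the decisive point is the choice of half-plane. Take the lower half-disk at each attractor and the upper half-disk at each repeller (or uniformly the reverse). Adjacent periodic points alternate type, so adjacent half-disks lie on opposite sides of $\bbR$ and their $g$-quotients are disjoint in $\mathcal E(g)$; if you used the same half at an attractor $\alpha_j$ and the neighbouring repeller $\alpha_{j+1}$, the two quotient annuli would coincide over $(\alpha_j,\alpha_{j+1})$ and you would double-count. With the alternating choice you obtain exactly one embedded annulus of modulus $\pi/|\log\rho_j|$ per periodic point, and the cut $c$ can be placed in any of the complementary gaps. This yields the inequality with the stated constant.
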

 This lemma gives a more accurate estimate on $\bar \tau (F)$ than the last subcase of Theorem \ref{th-XB_NG}; see \cite[Lemma 4]{XB_NG} for details.

\subsection{Bubbles}
Let $J\subset \bbR$ be an open segment. Consider a family $f_{\omega}$ of circle diffeomorphisms parametrized by $\omega \in J$. The family is called \emph{monotonic} if  $f_{\omega}(x)$ strictly increases on $\omega$ for each fixed $x$. We do not consider monotonically decreasing families because the change of variable $\omega \to -\omega$ turns decreasing families into increasing ones.  From now on, we only consider analytic families of analytic circle diffeomorphisms.
Let $F_{\omega}$ be a lift of $f_{\omega}$ to the real axis.

Put $I_{\frac pq, F_{\omega}}:=\{\omega\in J \mid \rot(F_{\omega})=\frac pq\}$ where $p/q\in \bbQ$. This interval contains several  open \emph{intervals of hyperbolicity},  where $f_{\omega}$ is hyperbolic. The complement to these intervals in $I_{\frac pq, F_{\omega}}$ is formed by several isolated points $\omega \in I_{p/q, F_{\omega}}$ such that $f_{\omega}$ has parabolic periodic orbits; this is clear because $f_{\omega}$ is strictly monotonic.

The following definition was introduced in \cite{XB_NG} for $f_{\omega}=f+\omega$.
  \begin{definition}
  The image of the segment $I_{\frac pq, F_{\omega}}$ under the map $\omega \mapsto \bar \tau(F_\omega)$ is called the \emph{$\frac pq$-bubble} of the family $f_{\omega}$ and denoted by $B_{p/q, F_{\omega}}$.
 \end{definition}
 Lemmas \ref{lem-parab}, \ref{lem-hyp} below imply that the map  $\omega \mapsto \bar \tau(F_\omega)$ is continuous on the segment $I_{\frac pq, F_{\omega}}$, thus the $p/q$-bubble of a monotonic family is a continuous curve.
 Due to Remark \ref{rem-analyticity}, $\bar\tau(f_{\omega})$ is analytic on $\omega$. This remark and Theorem \ref{th-XB_NG-1} (case \ref{it-parab}) show that the images of the intervals of hyperbolicity are analytic curves, and the images of their endpoints are at $p/q$. So the $p/q$-bubble is a union of several analytic curves in the upper half-plane ``growing'' from $p/q$ and the point $p/q$ itself. It is possible that $I_{\frac pq, F_{\omega}}$ is just one point; then $B_{\frac pq, F_{\omega}}$ is also a point, $B_{\frac pq, F_{\omega}}=\{p/q\}$.

 Each monotonic family of circle diffeomorphisms $f_{\omega}$ gives rise to the ``fractal-like'' set $\bigcup_{p/q\in \bbQ} B_{p/q, F_{\omega}}$ (bubbles) in $\bbH$, containing countably many analytic curves ``growing'' from rational points.
These curves may intersect and self-intersect as shown in  \cite{NG-inters}; some pictures of bubbles are also presented there.
The aim of this article is to prove that the bubbles of monotonic families are approximately self-similar near rational points, and to describe the ``limit shapes'' of bubbles.

\subsection{Self-similarity of bubbles near zero}

Let $f_{\eps}$, $\eps \in J$, be an analytic monotonic family of circle diffeomorphisms; here $J\subset \bbR$ is an open interval. Let $F_{\eps}$ be a lift of $f_{\eps}$. We consider the bubbles near zero, i.e. $B_{r, F_{\eps}}$ as $r\to 0, r\in \bbQ^+$ (for $r<0$, the consideration is analogous). The corresponding intervals $I_{r, F_{\eps}}$ accumulate to the right endpoint of $I_{0,F_{\eps}}$. We assume that this right endpoint is $\eps=0$ and write $f:=f_0$, $F:=F_0$.

Since $0$ is the right endpoint of $I_{0,F_{\eps}}$, we have ${\rot F = 0}$, $F$ has only parabolic fixed points, and we must have $F(x)\ge x$ for all $x\in \bbR$, so that the parabolic fixed points of $F$ disappear as $\eps$ increases.

We impose additional genericity assumptions on $f_{\eps}$. We assume that  $f$ has the only parabolic fixed point, and shift it to $0$, so that $F(0)=0$, $f'(0)=1$. We assume that this fixed point has  multiplicity 2, i.e. $f''(0)\neq 0$. We also assume $\frac{\partial f}{\partial \eps} (0)>0$.

In Sec. \ref{sec-Fatou}, we present the construction of a circle diffeomorhpism ${\mathbf K}$; in an appropriate chart, the family ${\mathbf K}+c$ becomes  the family of Lavaurs maps for the parabolic fixed point $0$ of $F$, and $c $ is the Lavaurs phase (see Definition \ref{def-Lavaurs}).
It turns out that self-similarity patterns of bubbles of $f_{\eps}, \eps>0$, are related to the bubbles of ${\mathbf K}$. The map~${\mathbf K}$ (``transition map'') for $C^2$ circle diffeomorphisms was introduced in \cite{Young} as a modulus of $C^1$ classification; its  role in the bifurcations of parabolic points of circle diffeomorphisms was studied in \cite[Theorem 6]{Afr_Liu_Young} and  \cite{Afr_Young}.

Put $R(z):=-1/z$. Take any rational number $p/q\in \bbQ, 0\le p/q \le 1$. Denote $a_n = -\frac{1}{(p/q)-n}$; then  $a_n \to 0$ and $R(a_n)=(p/q)-n$.

\begin{theorem}[Limit shapes of bubbles-1]
\label{th-main-1}
Let a monotonic analytic family of circle diffeomorphisms $f_{\eps}$ be as above.
In the above notation, the set of limit points of the curves $R(B_{a_n, F_{\eps}}) \mod 1 \subset \bbC/\bbZ$ as $n \to \infty$ includes the $p/q$-bubble of the family  ${\mathbf K} + c$.
\end{theorem}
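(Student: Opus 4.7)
The plan is to realize, via parabolic implosion, the complex torus $E(F_\eps)$ for $\eps\in I_{a_n,F_\eps}$ as a small quasiconformal deformation of the Ecalle torus associated to the Lavaurs map $\mathbf{K}+c(\eps)$, where $c(\eps)$ is the Lavaurs phase determined by $\eps$. The map $R(z)=-1/z$ will appear as the modular transformation swapping the two natural generators of the first homology of these tori.

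First I would fix attracting and repelling Fatou coordinates $\Phi^\pm$ for the parabolic germ $f$ at $0$, together with their perturbations $\Phi^\pm_\eps$ for small $\eps>0$, and invoke the classical Lavaurs theorem: for sequences $\eps_n\to 0^+$ whose Ecalle gate-width lies near an integer $n$ plus a fixed phase $c\in\bbR$, the iterate $F_{\eps_n}^n$ read through these charts converges uniformly on compacta to $\mathbf{K}+c$. Using the monotonicity of $f_\eps$ and of the Lavaurs family, the condition $\rot F_\eps=a_n$ (meaning that $n$ gate-passages produce a net $p/q$ rotation on the Ecalle cylinder) translates into $\rot(\mathbf{K}+c(\eps))=p/q$, and yields a real-analytic correspondence $\eps\mapsto c(\eps)$ sending $I_{a_n,F_\eps}$ onto a subinterval $J_n\subset\bbR$ with $J_n\to I_{p/q,\mathbf{K}+c}$ as $n\to\infty$.

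Next, I would compare the complex rotation numbers by reading the torus $E(F_\eps)=\Pi^\eps/(z\sim z+1,\ z\sim F_\eps(z))$ through the Fatou charts. The identification $z\sim F_\eps(z)$ pushed through $\Phi^-_\eps$ becomes the translation $w\sim w+1$ up to quasiconformally small corrections, while the identification $z\sim z+1$, iterated $n$ times through the gate and read in $\Phi^+_\eps$, becomes the Lavaurs identification $w\sim(\mathbf{K}+c)(w)$ up to a similar small error. On the Ecalle cylinder, $E(F_\eps)$ is therefore quasiconformally equivalent, with Beltrami dilatation of norm $o(1)$ as $n\to\infty$, to the Ecalle torus $E(\mathbf{K}+c(\eps))$, but with the ordered pair of generators of $H_1$ interchanged (the short generator of one torus becomes the long generator of the other, the latter winding $n$ times through the gate). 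Since swapping the ordered pair of generators acts by $\tau\mapsto -1/\tau$ on the modulus, and the integer shift by $n$ disappears mod $1$, one obtains
\[
R(\bar\tau(F_\eps))\equiv \bar\tau(\mathbf{K}+c(\eps))+o(1)\pmod 1
\]
uniformly on $I_{a_n,F_\eps}$ as $n\to\infty$. Combined with $J_n\to I_{p/q,\mathbf{K}+c}$, every point of the $p/q$-bubble of $\mathbf{K}+c$ arises as a limit point of $R(B_{a_n,F_\eps})\bmod 1$, which is the claim.

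\emph{Main obstacle.} The essential step is the quasiconformal comparison of the two tori: one must show that the identification $E(F_\eps)\to E(\mathbf{K}+c(\eps))$ has Beltrami coefficient of norm $o(1)$ as $n\to\infty$, uniformly for $\eps\in I_{a_n,F_\eps}$, and track the interchange of generators against the integer winding $n$. This is a torus-family version of Lavaurs's parabolic-implosion theorem and requires careful bookkeeping of the fundamental domain in the gate region, together with Theorem~\ref{th-XB_NG-1}\eqref{it-hyp} to control $\bar\tau_F$ along hyperbolicity intervals. A secondary difficulty is to verify that every interior $c\in I_{p/q,\mathbf{K}+c}$ is approached by some $c(\eps_n)$ with $\eps_n\in I_{a_n,F_\eps}$, which reduces to the nondegeneracy of the correspondence $\eps\mapsto c(\eps)$ away from the endpoints.
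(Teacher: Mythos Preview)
Your strategy is in the right spirit but differs from the paper's in a way that matters. The paper factors the argument into three clean pieces: (i) an \emph{exact} identity $\bar\tau(\mathcal R g)\equiv R(\bar\tau(G))\pmod 1$ valid for any hyperbolic $g$ (Lemma~\ref{lem-renorm}), proved by showing that the Buff tori $\mathcal E(g)$ and $\mathcal E(\mathcal R g)$ are literally the same complex torus with the ordered pair of generators swapped; (ii) uniform convergence of the renormalizations $\mathcal R f_\eps$ to $\mathbf K+c$ in suitable charts, via Lavaurs (Lemma~\ref{lem-R-in-Fatou-coord}); and (iii) continuity of $\bar\tau$ at hyperbolic maps (Lemma~\ref{lem-hyp}). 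One then restricts to $c$ with $\mathbf K+c$ hyperbolic, gets $R(\bar\tau(F_{\eps_k}))\to\bar\tau(\mathbf K+c)$ along a suitable sequence (Lemma~\ref{lem-summ}), and takes closure. The point is that the passage from the $f_\eps$-torus to the renormalized torus is exact, so no ``torus-family Lavaurs theorem'' is needed: Lavaurs is used only at the level of circle maps, and all the approximation is isolated in the continuity of $\bar\tau$. Your single-step quasiconformal comparison would have to reprove (i) and (iii) simultaneously inside the gate, which is strictly harder.

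Two concrete problems in your write-up. First, for real $\eps$ the Arnold torus $\Pi^\eps/(z\sim z+1,\ z\sim F_\eps(z))$ is degenerate; the object carrying $\bar\tau(F_\eps)$ when $f_\eps$ is hyperbolic is the Buff torus $\mathcal E(f_\eps)$ of Section~\ref{sec-Buff}, built from arcs in linearizing charts of the periodic orbits, and your comparison must be carried out there. Second, your displayed estimate $R(\bar\tau(F_\eps))\equiv\bar\tau(\mathbf K+c(\eps))+o(1)$ \emph{uniformly} on $I_{a_n,F_\eps}$ is too strong without the extra hypothesis of Theorem~\ref{th-main}: if some $\mathbf K+c_0$ with $\rot=p/q$ has two parabolic cycles, then $\bar\tau$ is discontinuous at $\mathbf K+c_0$ (Remark~\ref{rem-discont}), and a uniform $o(1)$ would contradict this. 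For Theorem~\ref{th-main-1} you should argue pointwise at hyperbolic $c$ and then use that the set of limit points is closed, exactly as the paper does; your secondary difficulty (every interior $c$ is hit) is Lemma~\ref{lem-d-monot}.
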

With an additional requirement on ${\mathbf K}$, we get the following stronger result.

\begin{theorem}[Limit shapes of bubbles-2]
\label{th-main}
In assumptions of Theorem \ref{th-main-1}, suppose that for all $c$, whenever $\rot ({\mathbf K}+c)=p/q$, the diffeomorphism ${\mathbf K}+c$ has at most one parabolic cycle.

Then the curves $R(B_{a_n, F_{\eps}}) \mod 1  \subset \bbC/\bbZ$ (with some parametrizations) tend uniformly to the $p/q$-bubble of the family  ${\mathbf K} + c$ as $n\to \infty$.
\end{theorem}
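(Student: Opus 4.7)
The plan is to combine Theorem~\ref{th-main-1}, which already provides one-sided inclusion of limit sets, with a parametrization and equicontinuity argument to obtain uniform convergence of curves. First, I parametrize $R(B_{a_n,F_\eps}) \mod 1$ by $\eps \in I_{a_n,F_\eps}$ rescaled affinely to $[0,1]$, producing $\gamma_n\colon [0,1]\to\bbC/\bbZ$; similarly, I parametrize the $p/q$-bubble of ${\mathbf K}+c$ by $c\in I_{p/q,{\mathbf K}+c}$ rescaled to $[0,1]$, producing $\gamma$. The single-parabolic-cycle hypothesis guarantees that $I_{p/q,{\mathbf K}+c}$ is a single closed interval whose interior consists entirely of hyperbolic parameters, so $\gamma$ is an unambiguous continuous curve whose endpoints both map to $p/q$ by Theorem~\ref{th-XB_NG-1}(\ref{it-parab}); the same endpoint equality $\gamma_n(0)=\gamma_n(1)=p/q$ holds on the $\gamma_n$ side.

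Next, I would refine the construction behind Theorem~\ref{th-main-1} so as to associate to each $\eps \in I_{a_n,F_\eps}$ a Lavaurs phase $c_n(\eps)$ such that a suitable renormalization of $F_\eps$ converges to ${\mathbf K}+c_n(\eps)$, with $R(\bar\tau(F_\eps))\mod 1$ close to $\bar\tau({\mathbf K}+c_n(\eps))\mod 1$. Monotonicity of both families forces $c_n$ to be orientation-preserving and continuous. Combined with the surjectivity half of Theorem~\ref{th-main-1} and the single-parabolic-cycle hypothesis (which forbids interior parabolic parameters of ${\mathbf K}+c$ in $I_{p/q,{\mathbf K}+c}$), any subsequential limit of the rescaled $c_n\colon[0,1]\to[0,1]$ must be a monotone continuous surjection of $[0,1]$ onto itself fixing $0$ and~$1$, hence the identity. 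This yields $\gamma_n(t)\to\gamma(t)$ pointwise.

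To upgrade pointwise to uniform convergence, I would establish equicontinuity of $\{\gamma_n\}$ and invoke Arzelà--Ascoli. On the interior of $[0,1]$, equicontinuity follows from analyticity of $\bar\tau(F_\eps)$ on intervals of hyperbolicity together with the tangent-disk bound of Theorem~\ref{th-XB_NG-1}(\ref{it-hyp}): $\bar\tau(F_\eps)$ lies in a disk of radius $O((nq-p)^{-2})$ tangent at $a_n$, whose image under $R$ has bounded diameter in $\bbC/\bbZ$, uniformly in $n$; a Cauchy-type estimate then yields a uniform Lipschitz bound for $\gamma_n$ on the rescaled parameter interval $[0,1]$. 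Equicontinuity at the endpoints $0$ and $1$ reduces to showing $R(\bar\tau(F_\eps))\mod 1\to p/q$ uniformly as $\eps$ approaches the parabolic endpoints $\eps_n^\pm$ of $I_{a_n,F_\eps}$; this follows from the sharper Lemma~\ref{lem-disc-radius}, since as the multipliers $\rho_x$ of the parabolic cycle approach $1$, the disk radius shrinks to zero.

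The main obstacle I anticipate is the coupled endpoint analysis: showing that $c_n(\eps_n^\pm)\to c^\pm$ uniformly and that the Lavaurs phase correspondence remains well-controlled as both $\eps\to\eps_n^\pm$ and $n\to\infty$. This requires parabolic-implosion estimates tracking the degenerating Fatou coordinates of $F_\eps$ near its parabolic $a_n$-cycle simultaneously with those of ${\mathbf K}+c$ near its parabolic $p/q$-cycle. The uniqueness-of-parabolic-cycle hypothesis is essential precisely here: without it, the parabolic locus inside $I_{p/q,{\mathbf K}+c}$ could contain points strictly between $c^\pm$, the bubble of ${\mathbf K}+c$ would split into several analytic arcs meeting at $p/q$, and the intervals of hyperbolicity inside $I_{a_n,F_\eps}$ would asymptotically segregate into several groups, producing several limit curves and destroying uniform convergence to a single parametrized arc.
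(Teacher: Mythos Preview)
Your proposal has a genuine gap and a misreading of the hypothesis.

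First, the hypothesis of Theorem~\ref{th-main} does \emph{not} say that the interior of $I_{p/q,{\mathbf K}+c}$ consists entirely of hyperbolic parameters. It says that for each fixed $c$ in that interval, ${\mathbf K}+c$ has at most one parabolic cycle; there may still be finitely many interior values of $c$ at which ${\mathbf K}+c$ has exactly one parabolic cycle, and the $p/q$-bubble may still consist of several analytic arcs all meeting at $p/q$. Your conclusion that ``$\gamma$ is an unambiguous continuous curve'' in the sense of a single analytic arc, and your later use of this to pin down the rescaled $c_n$, therefore rest on a false premise.

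Second, even granting monotonicity and surjectivity, the assertion that any subsequential limit of the rescaled $c_n\colon[0,1]\to[0,1]$ ``must be a monotone continuous surjection of $[0,1]$ onto itself fixing $0$ and $1$, hence the identity'' is simply false: a monotone continuous surjection of $[0,1]$ fixing the endpoints need not be the identity. Nothing in your argument controls how the affine rescaling of $I_{a_n,F_\eps}$ matches the affine rescaling of $I_{p/q,{\mathbf K}+c}$; the lengths of these intervals and the distribution of the Lavaurs phase within them are not compared. Your equicontinuity step is also not substantiated: the tangent-disk bound controls only the \emph{range} of $\bar\tau(F_\eps)$, and there is no uniform complex neighborhood of $I_{a_n,F_\eps}$ on which $\bar\tau_F$ is holomorphic, so a Cauchy estimate for the derivative in the rescaled variable is not available.

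The paper avoids all of this by choosing the parametrization differently: instead of affinely rescaling $\eps$, it parametrizes $R(B_{a_n,F_\eps})$ directly by the Lavaurs phase $d(\eps)$ (defined in Section~\ref{sec-egg}), so that the target curve is parametrized by $c$ and the comparison is between $R(\bar\tau(F_\eps))$ and $\bar\tau({\mathbf K}+d(\eps))$. Uniform convergence then follows by a short contradiction argument: if it failed along a sequence $\eps_k$, pass to a subsequence with $d(\eps_k)\to c\in I_{p/q,{\mathbf K}+\omega}$ (Lemma~\ref{lem-d-monot}); the single-parabolic-cycle hypothesis enters only to invoke continuity of $\bar\tau$ at ${\mathbf K}+c$ (Lemma~\ref{lem-tau-contin}), yielding $R(\bar\tau(F_{\eps_k}))\to\bar\tau({\mathbf K}+c)$ (Lemma~\ref{lem-summ}) and a contradiction. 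No Arzel\`a--Ascoli, no endpoint analysis, and no claim about interior hyperbolicity are needed.
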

\begin{figure}[ht]
 \includegraphics[width=0.7\textwidth]{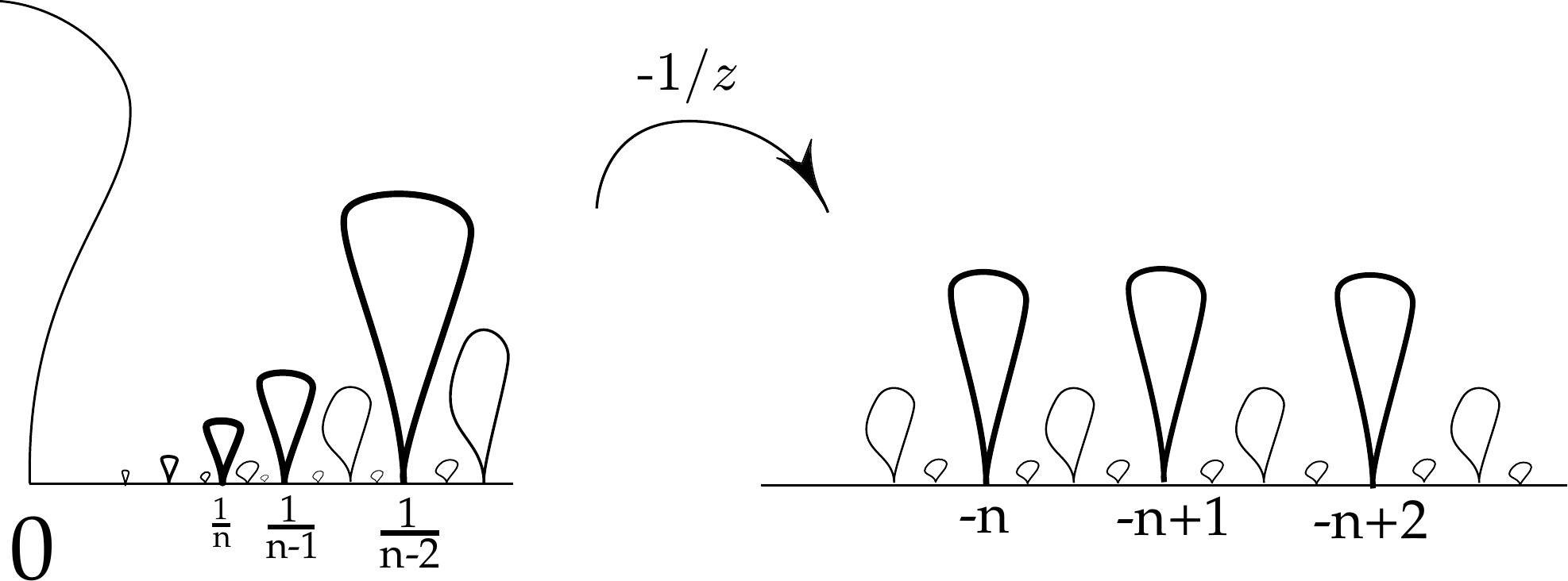}
 \caption{Self-similarity of bubbles. The $a_n$-bubbles with $a_n=1/n, p/q=0$, are shown in thick.}
\end{figure}

The weaker version of this theorem holds true even in assumptions of Theorem \ref{th-main-1}, see Theorem \ref{th-main-1.5} below; we postpone its statement till Sec. \ref{sec-proofs} because it is more technical.

\begin{remark}
\label{rem-genericity}
The additional requirement in Theorem \ref{th-main} (that the maps $\mathbf K+c$ with rotation number $p/q$  do not have two parabolic cycles simultaneously) is open and dense, see the sketch of the proof in Sec. \ref{sec-generic}. This condition cannot be removed. The reason for this is that the complex rotation number is not continuous at diffeomorphisms with two parabolic cycles, see Remark \ref{rem-discont}. The requirement is used in Lemma \ref{lem-tau-contin}.
\end{remark}

Theorem \ref{th-main} implies the following self-similarity.
\begin{theorem}[Self-similarity of bubbles-1]
\label{th-ssim}
 In assumptions of Theorem \ref{th-main}, the set $\bigcup_n B_{a_n, F_{\eps}}$ is approximately self-similar near the point $0$, with the self-similarity given by $z \mapsto \frac{z}{1+z}$.

 Formally, the distance in $C$ metrics between the curves $R(B_{a_n, F_{\eps}})$ and $R(B_{a_{n+1}, F_{\eps}})+1$  (with some parametrizations) tends to zero as $n\to \infty$.
\end{theorem}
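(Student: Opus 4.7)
The plan is to deduce the theorem directly from Theorem~\ref{th-main} applied at the indices $n$ and $n+1$, combined with a triangle inequality. The only nontrivial work is the bookkeeping needed to lift uniform convergence in $\bbC/\bbZ$ to uniform convergence in $\bbC$ with the prescribed shift by $+1$; consequently there is no substantial analytic obstacle, only the identification of the correct sheet in the covering $\bbC \to \bbC/\bbZ$.

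I would first record the elementary arithmetic: since $a_n = 1/(n-p/q)$, one has $R(a_n) = -1/a_n = p/q - n$, whence $R(a_n) - R(a_{n+1}) = 1$. Thus $R(B_{a_n, F_\eps})$ and $R(B_{a_{n+1}, F_\eps}) + 1$ concentrate around the common point $p/q - n \in \bbC$, and modulo $\bbZ$ around $p/q \in \bbC/\bbZ$; this is exactly what singles out the shift $+1$ in the statement. Next, I would establish a uniform bound on the diameter of $R(B_{a_n, F_\eps})$. By Theorem~\ref{th-XB_NG-1}(\ref{it-hyp}), $B_{a_n, F_\eps}$ is contained in the closed disk of radius $D_f/(4\pi q_n^2)$ tangent to $\bbR$ at $a_n$, where $q_n = qn-p$ is the denominator of $a_n$ in lowest terms; since $R'(z) = 1/z^2$ is of order $|a_n|^{-2} \asymp n^2$ on that disk and $q_n^{-2} \asymp n^{-2}$, the image $R(B_{a_n, F_\eps})$ lies in a disk of uniformly bounded radius around $R(a_n) = p/q-n$. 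The analogous bound holds for $R(B_{a_{n+1}, F_\eps})+1$ around the same point $p/q-n$.

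Finally, let $\gamma : [0,1] \to \bbH/\bbZ$ be a parametrization of the $p/q$-bubble of ${\mathbf K}+c$. Theorem~\ref{th-main} applied at the indices $n$ and $n+1$ furnishes parametrizations $\gamma_n, \delta_n : [0,1] \to \bbH/\bbZ$ of $R(B_{a_n, F_\eps}) \bmod 1$ and of $R(B_{a_{n+1}, F_\eps}) \bmod 1$ respectively, both tending uniformly to $\gamma$. By the triangle inequality, $\gamma_n$ and $\delta_n$ become uniformly close in $\bbH/\bbZ$. For $n$ so large that this uniform distance is smaller than $1/2$, the uniform boundedness from the previous paragraph selects a unique lift $\tilde\gamma_n$ of $\gamma_n$ in the bounded neighborhood of $p/q-n$ (this is $R(B_{a_n, F_\eps})$ itself, with the parametrization inherited from $\gamma_n$) and a unique lift $\tilde\delta_n$ of $\delta_n$ near $p/q-n-1$ (this is $R(B_{a_{n+1}, F_\eps})$); the shifted lift $\tilde\delta_n + 1$ lives in the same neighborhood of $p/q-n$ as $\tilde\gamma_n$, so the mod-$\bbZ$ estimate upgrades to $\|\tilde\gamma_n - (\tilde\delta_n + 1)\|_{C^0} \to 0$, which is the claim.
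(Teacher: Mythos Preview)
Your proof is correct and follows essentially the same route as the paper: apply Theorem~\ref{th-main} at consecutive indices, use the triangle inequality to get closeness in $\bbC/\bbZ$, then lift to $\bbC$ with the shift $+1$. The only difference is in the localization step used for lifting: the paper simply observes that the bubble $R(B_{a_n,F_\eps})$ is a curve with both endpoints at $p/q-n$ (since $B_{a_n,F_\eps}$ starts and ends at $a_n$), which immediately pins down the correct $\bbZ$-translate, whereas you instead establish a uniform diameter bound via Theorem~\ref{th-XB_NG-1}(\ref{it-hyp}) and the derivative estimate $|R'(z)|\asymp n^2$ on a disk of radius $\asymp n^{-2}$. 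Your argument is slightly more laborious but equally valid; the endpoint observation is the shortcut you missed.
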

Note that $R$ conjugates the shift $z\mapsto z-1$ to   $z \mapsto \frac{z}{z+1}$, which motivates the informal statement that the self-similarity of $\bigcup_n B_{a_n, F_{\eps}}$ is given by $z\mapsto \frac{z}{z+1} $.

\begin{proof}
Theorem~\ref{th-main} implies that the distance between $R(B_{a_n, F_{\eps}})$ and $R(B_{a_{n+1}, F_{\eps}})$ in $\bbC/\bbZ$ tends to zero. Recall that the map $R$ takes the sequence $\{a_n\}$ to $\{(p/q)-n\}$, so it takes the bubble $B_{a_n, F_{\eps}}$ to a continuous curve with both endpoints at $(p/q)-n$. Since  $R(B_{a_n, F_{\eps}})$ and $R(B_{a_{n+1}, F_{\eps}})$  start at points $(p/q)-n$ and $(p/q)-n-1$ respectively, and  are close in $\bbC/\bbZ$, the conclusion follows.

\end{proof}

Another version of a self-similarity result is the following.
\begin{theorem}[Self-similarity of bubbles-2]
\label{th-ssim-2}
 In assumptions of Theorem \ref{th-main-1}, suppose that ${\mathbf K}+c$ has at most one parabolic cycle for each $c$. Then the whole set of bubbles $\bigcup_{a,b} B_{a/b, F_{\eps}}$ is approximately self-similar near $0$.

 Formally, we have the following convergence of the countable unions of analytic curves:
 \begin{equation}
  \label{eq-allbub}
  R\left(\bigcup_{\frac ab \in \bbQ, \frac ab\in  [\frac 1n, \frac 1{n+1}]} B_{a/b, F_{\eps}}\right) +n \to \left(\bigcup_{\frac{p}{q} \in \bbQ, \frac pq\in  [0, 1]} B_{p/q, \mathbf K +c}\right)
 \end{equation}
as $n\to \infty$, and the convergence is uniform.
\end{theorem}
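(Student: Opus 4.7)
The plan is to deduce Theorem~\ref{th-ssim-2} from Theorem~\ref{th-main} by a cutoff-on-denominators argument, using the bubble-size bounds of Lemma~\ref{lem-disc-radius}. First observe that the rationals $a/b \in [1/(n+1), 1/n]$ (in lowest terms) are in natural bijection with the rationals $p/q \in [0,1]\cap\bbQ$: writing $a/b = q/(nq+p)$ gives $p = b-an$, $q = a$, and the map $z\mapsto R(z)+n$ sends the real endpoint $a/b$ of $B_{a/b, F_\eps}$ to the corresponding endpoint (up to the usual sign convention) of $B_{p/q, \mathbf K + c}$. For each \emph{fixed} $p/q$, Theorem~\ref{th-main} (applicable under our extra genericity hypothesis on $\mathbf K$) yields uniform convergence $R(B_{a/b, F_\eps})+n \to B_{p/q, \mathbf K + c}$ of the individual curves; the task is to make this convergence uniform in $p/q$.

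Fix $\delta>0$ and a cutoff $Q$. By Lemma~\ref{lem-disc-radius} and Theorem~\ref{th-XB_NG-1}(\ref{it-hyp}), $B_{p/q, \mathbf K + c}$ is contained in a closed disk of radius $\le D_{\mathbf K}/(4\pi q^2)$ tangent to $\bbR$ at $p/q$, and $B_{a/b, F_\eps}$ is contained in a closed disk of radius $\le D_{f_\eps}/(4\pi b^2)$ tangent to $\bbR$ at $a/b$. A short Möbius computation shows that $R$ transports the latter disk to a disk tangent to $\bbR$ at $R(a/b)+n$ of radius $\sim D_{f_\eps}(b/a)^2/(4\pi b^2) = D_{f_\eps}/(4\pi q^2)$; the derivative factor $|R'(a/b)|=(b/a)^2$ exactly cancels the $1/b^2$, producing a clean $1/q^2$ bound. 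Since the $\eps$ involved tends to $0$ and $f_\eps \to f$ analytically, the distortions $D_{f_\eps}$ are uniformly bounded. Hence for $q>Q$ large enough, both curves sit inside disks of diameter less than $\delta$ tangent to $\bbR$ at the same base point, so their Hausdorff distance is at most $\delta$ \emph{without invoking Theorem~\ref{th-main}}. For the finitely many $p/q$ with $q\le Q$, Theorem~\ref{th-main} gives uniform convergence, so for $n$ large each such pair of curves is within Hausdorff distance $\delta$. Combining the two estimates yields the desired uniform convergence of the entire countable union.

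The main obstacle is controlling the infinite tail of the unions uniformly: Theorem~\ref{th-main} is a statement about individual rationals $p/q$, and without an absolute size bound one could not say anything about high-complexity bubbles. The key technical point is that the Möbius derivative factor $(b/a)^2$ exactly converts the $1/b^2$ bound for $B_{a/b, F_\eps}$ into the $1/q^2$ bound for its image, matching the bound for $B_{p/q, \mathbf K + c}$; this is what makes the cutoff argument work with a single $Q$ on both sides. Verification of uniform boundedness of $D_{f_\eps}$ for $\eps$ in the relevant range and a careful bookkeeping of the sign convention in the bijection $a/b \leftrightarrow p/q$ are routine.
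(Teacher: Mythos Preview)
Your proposal is correct and follows essentially the same route as the paper's own proof: split the rationals by a denominator cutoff $Q$, handle the finitely many $p/q$ with $q\le Q$ via Theorem~\ref{th-main}, and for $q>Q$ use the disk bound of Theorem~\ref{th-XB_NG-1}(\ref{it-hyp}) together with the M\"obius derivative computation $|R'(a/b)|=(b/a)^2$ that converts the $1/b^2$ radius into a $1/q^2$ radius. The paper phrases the large-$q$ step slightly more tersely (it only notes that $R(B_{a/b,F_\eps})$ lands in a $C'/q^2$-neighborhood of $p/q$, hence inside the $\delta$-neighborhood of $B_{p/q,\mathbf K+c}$, without separately bounding the target bubble), and your citation of Lemma~\ref{lem-disc-radius} is superfluous since the cruder distortion bound already suffices; otherwise the arguments coincide.
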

\begin{proof}
For a small $\delta>0$, we should prove that for $n$ large, the above union is $\delta$-close to the bubbles of ${\mathbf K}+c$.

Clearly, any rational number $a/b>0$ sufficiently close to $0$ appears in a sequence $\{a_n\}$ for some appropriate $p/q$, namely $p/q = \{-b/a\}$. If in the union  \eqref{eq-allbub} we only take $a/b$ that correspond to  finitely many $q$, the statement follows from Theorem \ref{th-main}. We will see that for $a/b$ corresponding to large $q$, the statement holds authomatically, which will finish the proof.

Formally, let $a/b$ be small and appear in a sequence $\{\frac 1{(p/q)-n)}\}$: $a = q, b = p-qn$.  Then due to Theorem \ref{th-XB_NG-1} case 3,  the bubble $B_{a/b, F_{\eps}}$ is within the $C/b^2$-neighborhood of $a/b$. The direct computation shows that $R(B_{a/b, F_{\eps}})$ is within the $C'/q^2$-neighborhood of $p/q$. Here $C, C'$ are constants that depend on the family $f_{\eps}$ only. Thus for sufficiently large $q$, $B_{a/b, F_{\eps}}$ belongs to the $\delta$-neighborhood  of $B_{p/q, {\mathbf K}+c}$.

\end{proof}

\begin{remark}
Above we mentioned (Remark \ref{rem-genericity}) that for any $p/q$, the additional requirement on ${\mathbf K}$ in Theorem \ref{th-main} corresponds to an open and dense set of $f$. Hence  the additional requirement on ${\mathbf K}$ in Theorem \ref{th-ssim-2} corresponds to a residual set of $f$.
\end{remark}

\subsection{Self-similarity of bubbles near any rational point}
Analogous self-similarity results hold near any rational point of the real axis. We do not repeat remarks from the previous section; all of them apply for the theorems below as well.

Consider an analytic monotonic family of circle diffeomorphisms $f_{\eps}$, $\eps \in J$, and let $F_{\eps}$ be a lift of $f_{\eps}$. Fix a rational number $k/l$.  We consider the bubbles $B_{s, F_{\eps}}$ where $s \to k/l$, $s>k/l$. Again, we assume that the right endpoint of $I_{k/l, F_{\eps}}$ is zero,  and write $f:=f_0$, $F:=F_0$.

Then we  have $\rot F = k/l$, $f$ has only parabolic cycles, and $F^l(x)-k\ge x$ for all $x\in \bbR$, so that the parabolic cycles of $f$ disappear as $\eps$ increases.

We impose additional genericity assumptions on $f_{\eps}$: we assume that  $f$ has the only parabolic cycle, namely the orbit of $0$; we also  assume that this cycle has  multiplicity 2, i.e. $(f^l)'(0)=1$, $(f^l)''(0)\neq 0$, and $\frac{\partial f^l_{\eps}}{\partial \eps}(0)>0$.

In Sec. \ref{sec-Fatou}, we present the construction of a circle diffeomorhpism $\tilde {\mathbf K}$ and explain its relation to Lavaurs maps of $f^l$.

Take $r,s\in\bbZ $ such that $0<r<l$ and $kr+ls=1$. Let $\tilde R(z)=\frac{rz+s}{-lz+k}$ (so that $\tilde R \in SL(2,\bbZ)$).
Given $p/q\in\bbQ$, $0\le p/q \le 1$, we put $\tilde a_n :=\tilde R^{-1} (p/q-n)$; then  $\tilde a_n \to k/l$.

\begin{theorem}[Limit shapes of bubbles-1]
\label{th-main-1-kl}

In the above notation, the set of limit points of the curves $\tilde R(B_{\tilde a_n, F_{\eps}}) \mod 1 \subset \bbC/\bbZ$ as $n \to \infty$ includes the $p/q$-bubble of the family $\tilde {\mathbf K} + c$.
\end{theorem}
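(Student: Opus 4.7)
The plan is to adapt the proof of Theorem~\ref{th-main-1} by reduction to the $l$-th iterate. Set $G_{\eps} := F_{\eps}^{l} - k$, a monotonic analytic family of lifts of $g_{\eps} := f_{\eps}^{l}$. The genericity hypotheses on $f_{\eps}$ translate directly to the hypotheses of Theorem~\ref{th-main-1} applied to $g_{\eps}$: $G_{0}$ has a unique parabolic fixed point at $0$ of multiplicity two, and $\partial_{\eps}G_{\eps}|_{\eps=0}(0) > 0$. By the construction in Sec.~\ref{sec-Fatou}, the transition map of $g_{0}$ at $0$ is the circle diffeomorphism $\tilde{\mathbf K}$.

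Two identities then bridge the $F$- and $G$-sides. First, the identification of $E(G_{\eps}) = E(F_{\eps}^{l})$ as an $l$-fold cover of $E(F_{\eps})$, together with the appropriate change of basis of the sublattice, yields
\[
\bar\tau(G_{\eps}) = l\,\bar\tau(F_{\eps}) - k
\]
on each interval $I_{\tilde a_{n}, F_{\eps}}$: this is trivial at parabolic or irrational $\eps$ (both sides equal the corresponding rotation numbers) and extends by analyticity over hyperbolicity intervals via Theorem~\ref{th-XB_NG-1}(\ref{it-hyp}). Consequently $B_{\beta_{n}, G_{\eps}} = l\cdot B_{\tilde a_{n}, F_{\eps}} - k$ with $\beta_{n} := l\tilde a_{n} - k$. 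Second, using $rk + ls = 1$ one obtains the Möbius factorization
\[
\tilde R(z) = -\tfrac{r}{l} + \tfrac{1}{l}\,R(lz - k),
\]
and hence $\tilde R(B_{\tilde a_{n}, F_{\eps}}) = -r/l + R(B_{\beta_{n}, G_{\eps}})/l$. A direct computation gives $\beta_{n} = -1/\bigl((lp+rq)/q - ln\bigr)$, so the sequence $\{\beta_{n}\}$ fits (after reducing the rational mod~$1$ and reindexing) the standard $a_{m}^{G}$-pattern appearing in Theorem~\ref{th-main-1} applied to $g_{\eps}$. That theorem then identifies the limit of $R(B_{\beta_{n}, G_{\eps}}) \mod 1$ with a specific bubble of $\tilde{\mathbf K}+c$, and transferring through the Möbius factorization identifies the limit of $\tilde R(B_{\tilde a_{n}, F_{\eps}}) \mod 1$ with the $p/q$-bubble of $\tilde{\mathbf K}+c$ claimed in the statement.

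The main obstacle I anticipate is the final identification: the limit emerging from the reduction is a priori a scaled and translated image (by $z \mapsto -r/l + z/l$) of a bubble of $\tilde{\mathbf K}+c$ indexed by $(lp+rq)/q \bmod 1$ rather than by $p/q$, and one must verify that the resulting curve coincides in $\bbC/\bbZ$ with the $p/q$-bubble. This should follow from an intrinsic feature of the transition-map family --- a reparametrization of the Lavaurs phase implicit in the construction of $\tilde{\mathbf K}$ from $f^{l}$ --- which enforces the required $SL(2,\bbZ)$-compatibility between bubbles of $\tilde{\mathbf K}+c$ at different rational rotation numbers. A secondary subtlety is that only every $l$-th term of the natural $a_{m}^{G}$-sequence appears, so one must check that the subsequential limit still captures the full bubble, which should follow from the periodicity of $c \mapsto \bar\tau(\tilde{\mathbf K}+c)$.
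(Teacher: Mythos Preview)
Your reduction has a genuine gap at the very first step. If $f_0$ has a single parabolic cycle of period $l$, then $g_0=f_0^l$ has $l$ parabolic fixed points (the whole $f_0$-orbit of $0$), not one. So the hypotheses of Theorem~\ref{th-main-1} are not satisfied by the family $g_\eps$, and you cannot invoke that theorem as a black box. Relatedly, your claim that ``the transition map of $g_0$ at $0$ is $\tilde{\mathbf K}$'' is not correct in the sense of Sec.~\ref{ssec-K}: the $\mathbf K$ of that section is $\Psi^-\bigl((\Psi^+)^{-1}-1\bigr)$, built from Fatou coordinates that extend over $(-1,0)$ and $(0,1)$. For $g_0$ the attracting and repelling Fatou coordinates at $0$ extend only over $(a,0)$ and $(0,b)$ with $a=f_0^{-r}(0)$, $b=f_0^{r}(0)$, and the correct transition uses $f^{-r}$ rather than the shift by $-1$; that is precisely why $\tilde{\mathbf K}=\Psi^-\bigl(f^{-r}(\Psi^+)^{-1}\bigr)$ is a different object.

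The paper does not attempt this reduction. It works directly with the $k/l$-renormalization $\mathcal R^{k/l}$ (the first-return map to a fundamental domain of $f_\eps^l$), proves the Möbius law $\bar\tau(\mathcal R^{k/l}g)\equiv\tilde R(\bar\tau(G))\pmod 1$ (Lemma~\ref{lem-renorm-1}), and proves that $\mathcal R^{k/l}f_\eps$ converges to $\tilde{\mathbf K}+c$ via the Lavaurs theorem applied to $f^l$ (Lemma~\ref{lem-R-in-Fatou-coord-kl}). The argument then runs parallel to that of Theorem~\ref{th-main-1} with these ingredients substituted. Your identity $\bar\tau(G_\eps)=l\,\bar\tau(F_\eps)-k$ and the factorization $\tilde R(z)=-r/l+R(lz-k)/l$ are fine, but they do not circumvent the need to handle the $l$ simultaneous parabolic fixed points; doing so essentially amounts to redoing the paper's direct $k/l$ argument rather than reducing to the $k/l=0$ case. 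The ``obstacles'' you flagged (the index mismatch and the every-$l$-th-term issue) are symptoms of this: they arise precisely because the dynamics of $g_\eps$ near $0$ is governed by $f^{-r}$, not by the unit shift.
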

\begin{theorem}[Limit shapes of bubbles-2]
\label{th-main-kl}
In assumptions of Theorem \ref{th-main-1-kl}, suppose that for all $c$, whenever $\rot (\tilde {\mathbf K}+c)=p/q$, the diffeomorphism $\tilde {\mathbf K}+c$ has at most one parabolic cycle.

Then the curves $\tilde R(B_{\tilde a_n, F_{\eps}}) \mod 1$ (with some parametrizations) tend uniformly to the $p/q$-bubble of  the family $\tilde {\mathbf K} + c$ as $n\to \infty$.
\end{theorem}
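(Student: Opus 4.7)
\textbf{Proof plan for Theorem \ref{th-main-kl}.}
The strategy is to reduce the statement to Theorem \ref{th-main} by passing from the family $F_\eps$ to its $l$-th iterate shifted by $k$, namely the family $G_\eps := F_\eps^l - k$. Since $\rot F_0 = k/l$, we have $\rot G_0 = 0$, and the genericity hypotheses on $f^l$ at $0$ (fixed point of multiplicity $2$, transversality $\partial_\eps f^l_\eps(0)>0$) are precisely the hypotheses of Theorem \ref{th-main} for the family $G_\eps$. By construction in Sec. \ref{sec-Fatou}, the transition map $\tilde{\mathbf K}$ for $f^l$ coincides with the $\mathbf K$-map of Theorem \ref{th-main} applied to $G_\eps$, so the bubbles $B_{p/q, \tilde{\mathbf K}+c}$ on the right-hand side of Theorem \ref{th-main-kl} are exactly the bubbles appearing in Theorem \ref{th-main} for $G_\eps$.

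The key analytic step is to identify the map $\omega \mapsto \bar\tau_{F_\eps}(\omega)$ near $\omega = k/l$ with a map $\eta \mapsto \bar\tau_{G_\eps}(\eta)$ near $\eta = 0$ via the Möbius transformation $\tilde R(z) = \frac{rz+s}{-lz+k} \in SL(2,\bbZ)$. This identification comes from the natural $SL(2,\bbZ)$ action on the basis of $H_1$ of the complex torus $E(F+\omega)$: the translations $z\mapsto z+1$ and $z\mapsto F(z)+\omega$ generate a rank-2 lattice on $\Pi^\eps$, and passing from $(1, F+\omega)$ to $(1, F^l - k + l\omega)$ replaces the basis by its image under the matrix $\begin{pmatrix} -r & s \\ l & k \end{pmatrix}$ (using $kr+ls = 1$). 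The resulting biholomorphism $E(F+\omega) \cong E(G + l\omega - k)$ changes distinguished generators by this $SL(2,\bbZ)$ matrix, and therefore the modulus $\tau$ transforms by the corresponding Möbius transformation, which (up to sign conventions) is precisely $\tilde R$. In particular, $\tilde R$ sends $k/l$ to $\infty$ and conjugates the parametrization of bubbles $B_{\tilde a_n, F_\eps}$ to the parametrization of bubbles $B_{-1/(p/q-n), G_\eps}$ of the reduced family, i.e.\ to the sequence $a_n$ of Theorem \ref{th-main}.

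With this correspondence in hand, the theorem follows: applying Theorem \ref{th-main} to the family $G_\eps$ and the sequence $a_n = -1/((p/q)-n)$, one obtains uniform convergence of $R(B_{a_n, G_\eps}) \mod 1$ to the $p/q$-bubble of $\tilde{\mathbf K}+c$. Composing with the change of coordinates established in the previous step converts this statement into the desired uniform convergence of $\tilde R(B_{\tilde a_n, F_\eps}) \mod 1$ to the same limiting bubble. The additional hypothesis that $\tilde{\mathbf K}+c$ has at most one parabolic cycle when $\rot(\tilde{\mathbf K}+c) = p/q$ is transported verbatim from Theorem \ref{th-main}, where it is needed exactly for the Lemma \ref{lem-tau-contin} invoked in the proof of the $k/l = 0$ case.

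The main obstacle is the second step: carefully justifying the Möbius identification of complex rotation numbers. One must (i) verify that the biholomorphism between $E(F+\omega)$ and $E(G+l\omega-k)$ is well-defined for $\omega$ in a full neighborhood of $k/l$ in $\overline\bbH$, not just in the hyperbolic regime, so that the identification extends to the boundary behaviour encoded in $\bar\tau$; (ii) keep track of the integer shifts arising because $\bar\tau_F$ is defined only modulo $1$, which is exactly the reason only $SL(2,\bbZ)$ transformations (as opposed to arbitrary $GL(2,\bbQ)$) produce well-defined maps $\overline\bbH/\bbZ \to \overline\bbH/\bbZ$; and (iii) match the orientation and basis conventions so that the resulting Möbius map is $\tilde R$ and not some other representative of the same $SL(2,\bbZ)$-orbit. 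Once this bookkeeping is done, the rest of the proof is a mechanical transport of Theorem \ref{th-main}.
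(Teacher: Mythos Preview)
Your reduction to Theorem \ref{th-main} via $G_\eps := F_\eps^l - k$ does not go through, for two related reasons.

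First, $G_0 = F_0^l - k$ has $l$ parabolic fixed points on the circle (the entire parabolic $l$-cycle of $f_0$), not one. The hypotheses of Theorem \ref{th-main} explicitly require a single parabolic fixed point, and the construction of $\mathbf K$ in Sec.~\ref{ssec-K} depends on this: the Fatou coordinate $\Psi^-$ must extend via iterates of $G$ to the whole interval $(-1,0)$, which fails as soon as another fixed point of $G$ lies there. Consequently your claim that $\tilde{\mathbf K}$ coincides with the $\mathbf K$-map of $G_\eps$ is incorrect: $\tilde{\mathbf K} = \Psi^-\!\bigl(f^{-r}(\Psi^+)^{-1}\bigr)$ uses the map $f^{-r}$ (carrying $(0,b)$ to $(a,0)$ between neighbouring points of the cycle), not the shift by $-1$; these are genuinely different transition maps.

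Second, even granting a relation between $\bar\tau(F_\eps)$ and $\bar\tau(G_\eps)$, it is the affine map $\tau\mapsto l\tau-k$ (coming from the $l$-fold cover $\mathcal E(f^l)\to\mathcal E(f)$), not $\tilde R$. Composing with $R(z)=-1/z$ from Theorem \ref{th-main} gives $\tau\mapsto \frac{1}{k-l\tau}$, which differs from $\tilde R(\tau)=\frac{r\tau+s}{-l\tau+k}$. The discrepancy reflects that the standard renormalization $\mathcal R g_\eps$ (first return under $g_\eps=f_\eps^l$, induced by powers $f_\eps^{Nl}$) is \emph{not} the $k/l$-renormalization $\mathcal R^{k/l} f_\eps$ (first return under $f_\eps$, induced by powers $f_\eps^{nl-r}$; see Lemma \ref{lem-renorm-kl}). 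Only the latter produces $\tilde R$ on complex rotation numbers (Lemma \ref{lem-renorm-1}) and converges to $\tilde{\mathbf K}+c$ (Lemma \ref{lem-R-in-Fatou-coord-kl}).

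The paper therefore does not reduce to the $k/l=0$ case but instead reruns the proof of Theorem \ref{th-main} with $\mathcal R^{k/l}$, $\tilde R$, $\tilde{\mathbf K}$, $\tilde d$, $\tilde a_n$ replacing $\mathcal R$, $R$, $\mathbf K$, $d$, $a_n$, invoking Lemmas \ref{lem-renorm-1}, \ref{lem-R-in-Fatou-coord-kl}, \ref{lem-d-kl}, \ref{lem-d-monot-kl} in place of their unprimed counterparts.
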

The proofs of the following results literally repeat the proofs of Theorems \ref{th-ssim} and \ref{th-ssim-2}.

\begin{theorem}[Self-similarity of bubbles-1]
\label{th-ssim-kl}
 In assumptions of Theorem \ref{th-main-kl}, the set $\bigcup_n B_{\tilde a_n, F_{\eps}}$ is approximately self-similar near the point $k/l$, with the self-similarity given by $\tilde R^{-1}( \tilde R -1)$.

 Formally, the distance in $C$ metrics between the curves $\tilde R(B_{\tilde a_n, F_{\eps}})$ and $\tilde R(B_{a_{n+1}, F_{\eps}})+1$  (with some parametrizations) tends to zero as $n\to \infty$.
\end{theorem}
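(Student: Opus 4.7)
The proof follows the same template as Theorem~\ref{th-ssim}. The key input is Theorem~\ref{th-main-kl}: under the stated hypothesis, both sequences of curves $\tilde R(B_{\tilde a_n, F_{\eps}}) \bmod 1$ and $\tilde R(B_{\tilde a_{n+1}, F_{\eps}}) \bmod 1$ converge uniformly in $\bbC/\bbZ$ to the same limit, namely the $p/q$-bubble of $\tilde{\mathbf K}+c$. So the triangle inequality in $\bbC/\bbZ$ immediately gives that the distance between these two curves (with appropriate parametrizations) tends to zero as $n\to\infty$.

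Next I would upgrade this mod-$1$ estimate to an estimate in $\bbC$. By construction $\tilde R(\tilde a_n) = p/q - n$ and $\tilde R(\tilde a_{n+1}) = p/q - n - 1$, so these two integers differ by exactly~$1$. Because each bubble is a continuous closed curve that begins and ends at its defining rational number (by Lemmas~\ref{lem-parab}, \ref{lem-hyp} and case~\ref{it-parab} of Theorem~\ref{th-XB_NG-1}), the image $\tilde R(B_{\tilde a_n, F_{\eps}})$ is a continuous loop in $\overline{\bbH}$ based at $p/q - n$, and similarly $\tilde R(B_{\tilde a_{n+1}, F_{\eps}})$ is a loop based at $p/q - n - 1$. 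Adding $1$ to the latter yields a curve based at $p/q - n$ whose reduction mod $\bbZ$ coincides with the reduction of the original. Since the two mod-$1$ curves are uniformly $C^0$-close and both admit canonical lifts to $\bbC$ pinned at the common basepoint $p/q - n$, the uniqueness of such a lift forces $\tilde R(B_{\tilde a_n, F_{\eps}})$ and $\tilde R(B_{\tilde a_{n+1}, F_{\eps}}) + 1$ to be uniformly close in $\bbC$ itself, which is the formal statement of the theorem.

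To obtain the informal self-similarity statement, I would then pull the estimate back by $\tilde R^{-1}$. The composition $\sigma(z) := \tilde R^{-1}\bigl(\tilde R(z) - 1\bigr)$ is a Möbius transformation in $SL(2,\bbZ)$ sending $\tilde a_{n+1}$ to $\tilde a_n$, and more generally sending the bubble curve $B_{\tilde a_{n+1}, F_{\eps}}$ to a curve that the previous paragraph shows is close to $B_{\tilde a_n, F_{\eps}}$. As $n$ grows, all curves in question cluster near $k/l$, which sits strictly away from the pole of $\tilde R^{-1}$, so $\tilde R^{-1}$ is uniformly Lipschitz on the relevant region and uniform $C^0$-closeness is preserved under the pullback. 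Iterating $\sigma$ expresses the whole set $\bigcup_n B_{\tilde a_n, F_{\eps}}$ as approximately invariant under $\sigma$ near $k/l$.

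The only real subtlety — and the main thing one must verify carefully — is the endpoint-matching argument in the second paragraph: the canonical-lift passage from $\bbC/\bbZ$ to $\bbC$ uses in an essential way that bubbles are genuine loops closing up at the rational point, so that the integer translation by $+1$ is not merely a cosmetic bookkeeping choice but records the actual gap between the two lifts. Everything else is a transparent consequence of Theorem~\ref{th-main-kl} together with the elementary geometry of the Möbius map $\tilde R$.
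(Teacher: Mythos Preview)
Your argument is correct and follows exactly the template the paper uses: the paper says the proof ``literally repeats'' that of Theorem~\ref{th-ssim}, and your first two paragraphs reproduce that proof (triangle inequality in $\bbC/\bbZ$ from Theorem~\ref{th-main-kl}, then endpoint-matching via $\tilde R(\tilde a_n)=p/q-n$ to lift to $\bbC$). Your third paragraph, justifying the informal self-similarity via the Lipschitz pullback by $\tilde R^{-1}$, is a nice addition that the paper leaves implicit.
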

\begin{theorem}[Self-similarity of bubbles-2]
\label{th-ssim-1-kl}
 In assumptions of Theorem \ref{th-main-1-kl}, suppose that $\tilde {\mathbf K}+c$ for all $c$ has at most one parabolic cycle. Then the whole set of bubbles $\bigcup_{a,b} B_{a/b, F_{\eps}}$ is approximately self-similar near $p/q$.

 Formally, the countable union of analytic curves $$\tilde R\left(\bigcup_{\frac ab \in \bbQ, \frac ab\in  \tilde R^{-1}[-n, -n+1]} B_{a/b, F_{\eps}}\right) +n$$ tends uniformly to the bubbles of $\tilde {\mathbf K}+c$ as $n\to \infty$.
\end{theorem}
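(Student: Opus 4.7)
The plan is to follow the same template that proves Theorem \ref{th-ssim-2}, with the map $R(z)=-1/z$ replaced by the modular transformation $\tilde R\in SL(2,\bbZ)$. Fix $\delta>0$. I would start by observing that every rational $a/b$ lying in $\tilde R^{-1}[-n,-n+1]$ has the form $\tilde a_n$ for a unique $p/q\in[0,1]\cap\bbQ$, namely $p/q=\tilde R(a/b)+n$; writing $\tilde R^{-1}(w)=(kw-s)/(lw+r)$ one reads off $a=k(p-nq)-sq$, $b=l(p-nq)+rq$. The identity $kr+ls=1$ then collapses $|la-kb|$ to $q$, which is the key arithmetic fact. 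It implies $\tilde R'(a/b)=b^2/q^2$, and so the Möbius image of a disk of radius $D_f/(4\pi b^2)$ tangent to $\bbR$ at $a/b$ is a disk of radius $D_f/(4\pi q^2)$ tangent to $\bbR$ at $\tilde R(a/b)=p/q-n$.

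Next I would split the union over $p/q\in[0,1]\cap\bbQ$ using a threshold $Q$, to be chosen after $\delta$ is fixed. For the finitely many $p/q$ with $q\le Q$, the hypothesis of the theorem is precisely the hypothesis of Theorem \ref{th-main-kl}, so those theorems deliver uniform convergence of $\tilde R(B_{\tilde a_n,F_\eps})+n$ to the $p/q$-bubble of $\tilde{\mathbf K}+c$; a finite union of uniform convergences is itself uniform, which handles the low-denominator part for $n$ large. For the tail $q>Q$, Theorem \ref{th-XB_NG-1}(\ref{it-hyp}) combined with the disk-transfer computation above places $\tilde R(B_{a/b,F_\eps})+n$ inside a disk at $p/q$ of radius less than $D_f/(4\pi Q^2)$; since $p/q$ itself belongs to the $p/q$-bubble of $\tilde{\mathbf K}+c$, taking $Q$ large enough (depending on $\delta$) makes all such curves $\delta$-close to the right-hand union of \eqref{eq-allbub}. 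The same parametrisation by $(n,p/q)$ handles the reverse inclusion, giving Hausdorff convergence.

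The main point that needs care (and is the analogue of the calculation flagged in the proof of Theorem \ref{th-ssim-2}) is the arithmetic identity $|la-kb|=q$: it is what guarantees that the $1/b^2$ shrinkage of bubbles predicted by Theorem \ref{th-XB_NG-1}(\ref{it-hyp}) is transported by $\tilde R$ precisely into a $1/q^2$ shrinkage around $p/q$, so that the tail estimate is independent of the family and of $n$. Once this identity is in hand, the two ingredients, uniform convergence for bounded $q$ from Theorem \ref{th-main-kl} and automatic smallness of the bubble images for large $q$, combine exactly as in the proof of Theorem \ref{th-ssim-2}, and there is no further obstacle.
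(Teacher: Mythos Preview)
Your proposal is correct and follows essentially the same route as the paper, which simply states that the proof ``literally repeats'' the proof of Theorem~\ref{th-ssim-2}. You have in fact supplied the one piece of work the paper leaves implicit: the arithmetic identity $|la-kb|=q$ (equivalently $\tilde R'(a/b)=b^2/q^2$), which is exactly the ``direct computation'' the paper invokes to transport the $C/b^2$ estimate to a $C'/q^2$ estimate after applying $\tilde R$. The split into bounded $q$ (handled by Theorem~\ref{th-main-kl}) and large $q$ (handled by the disk estimate from Theorem~\ref{th-XB_NG-1}) is identical to the paper's.
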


\subsection{The definition of ${\mathbf K}$, $\tilde {\mathbf K}$}
\label{sec-Fatou}
\subsubsection{Fatou coordinates}
We define Fatou coordinates for a parabolic diffeomorphism $F\colon (\bbC, 0)\to (\bbC, 0)$ having a fixed point of multiplicity 2 at $0$ and such that $F(x)\ge x$. See \cite{Shi} for the proofs of the statements below, and \cite{Dou} for sketches.

\begin{figure}
\begin{center}
 \includegraphics[width=0.7\textwidth]{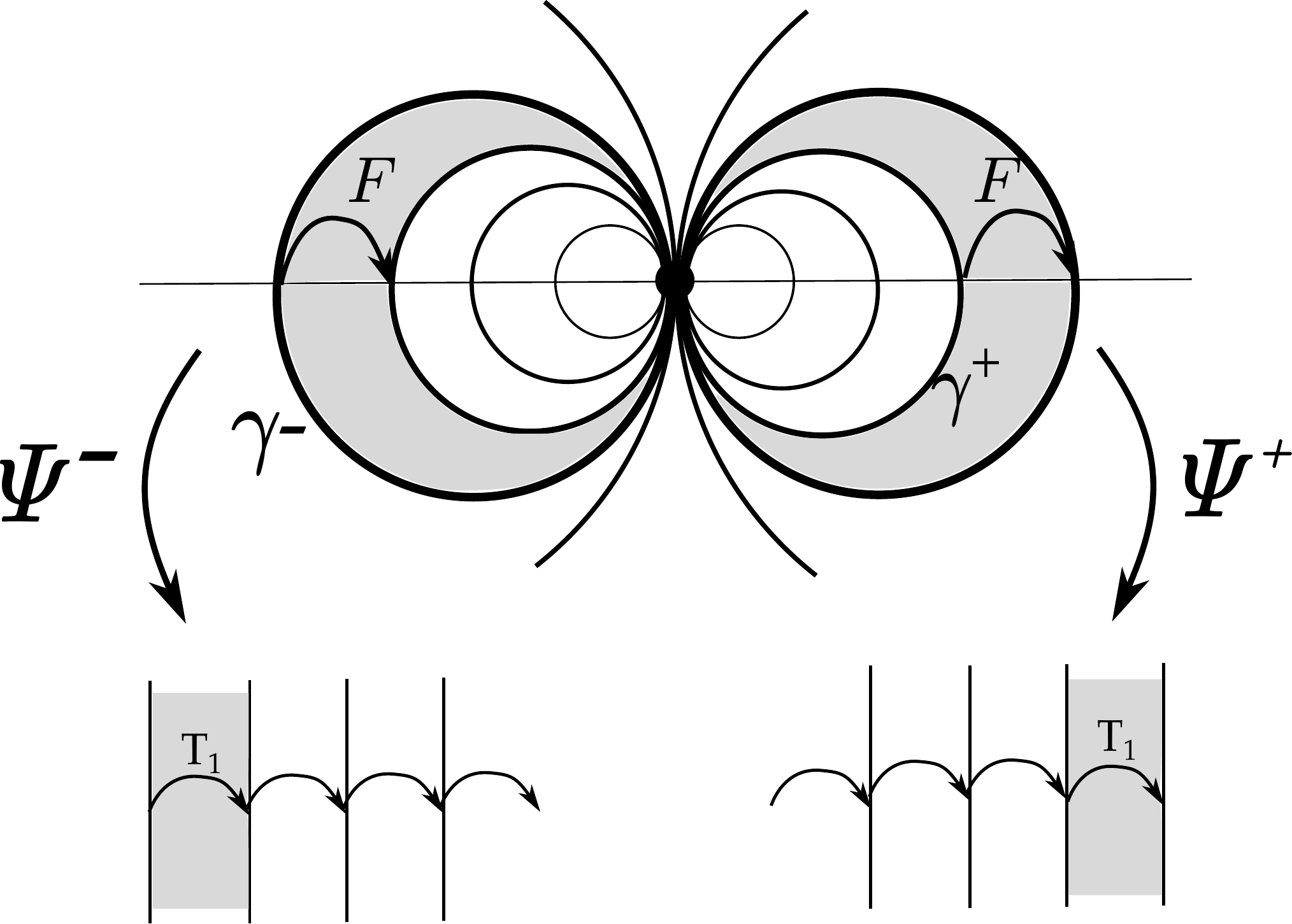}
 \end{center}
 \caption{Fatou coordinates}\label{fig-Fatou}
\end{figure}

Let $\gamma^{-}$ be a small circle centered on $\bbR^-$ and passing through $0$. Then  $F(\gamma^{-})$  does not intersect $\gamma^{-}$ (except at $0$).
Let $\Pi$ be the domain between $\gamma^{-}$ and $F(\gamma^{-})$ with $0$ removed. Its quotient space by the action of $F$ is a cylinder (Ecalle cylinder) biholomorphically equivalent to $\bbC/\bbZ$. Let $\Psi^{-}\colon \Pi \to \bbC$ be the lift of this biholomorphism to $\Pi$. Then $\Psi^{-}$ conjugates $F$ to the shift $T_1 \colon z\mapsto z+1$ and  extends, via iterates of $F$, to some domain $U^-$. The map $\Psi^-$ is called the attracting  Fatou coordinate of $F$.

Similarly we may construct the repelling  Fatou coordinate $\Psi^{+}$ on some domain $U^+$; the corresponding circle $\gamma^+$ is in the right half-plane. The union $U^+\cup U^-$ covers the neighborhood of zero.

Both maps $\Psi^{\pm}$ are well-defined up to a shift. We will normalize them by requiring $\Psi^-(x)=0, \Psi^+(y)=0,$ for some points $x, y\in\bbR, x<0<y$. If $F$ preserves the real axis, then all constructions are symmetric with respect to $z\mapsto \bar z$, so $\Psi^{\pm}$ preserve the real axis.

\begin{definition}
\label{def-Lavaurs}
\emph{Lavaurs maps} for $F$ are the maps $L_c:=(
\Psi^{+})^{-1} T_c\Psi^-$, $L_c\colon U^-\to\bbC$, for all $c\in\bbC$, where $T_c(z)=z+c$. The number $c$ is called the \emph{Lavaurs phase}.
\end{definition}

\subsubsection{The definition of $\mathbf K$}
\label{ssec-K}
In assumptions of Theorem \ref{th-main-1}, the chart $\Psi^{-}$ extends to $(-1,0)$ via iterates of $F$, and $\Psi^{+}$ extends to $(0,1)$. We normalize them by $\Psi^-(x)=0$ and $\Psi^{+}(x+1)=0$ for some $x \in (-1,0)$.

On the circle, the domains of definition of $\Psi^{\pm}$ coincide. This enables us to consider the transition map between Fatou coordinates. Formally, we put
$${\mathbf K}:=\Psi^- ( (\Psi^+)^{-1}-1).$$ This map is defined on the whole real axis and commutes with the shift by $1$, hence defines an analytic circle diffeomorphism. This is the map we need for Theorems \ref{th-main-1} and \ref{th-main}.

Another viewpoint on the same map is the following. Between the fundamental domains $[x, F(x)]$ and $[x+1, F(x+1)]$, we have two natural maps: one is the shift by $1$, the other one is a Lavaurs map. Now, $\mathbf K$ is their composition, in the appropriate chart. This is formalized as follows.
\begin{remark}
 \label{rem-Lav-chart}
  In the chart $\Psi^+(z+1) = \Psi^+ T_1$ on $(-1, 0)$, the map ${\mathbf K}+c$ turns into the Lavaurs map $L_c-1$:
  $$T_{-1}(\Psi^+)^{-1}T_c \Psi^- ((\Psi^+)^{-1}-1)\Psi^+ T_1   = T_{-1}(\Psi^+)^{-1} T_c\Psi^- =  L_c -1.$$
 \end{remark}

 \subsubsection{The definition of $\tilde{ \mathbf K}$}

In assumptions of Theorem \ref{th-main-1-kl}, the map $f^l$ has a parabolic fixed point of multiplicity $2$ at zero, and its orbit under $f$ consists of $l$ points.
Let $a:=f^{-r}(0)$ and $b:=f^r(0)$ be the closest returns of this orbit to $0$, $-1<a<0<b<1$, $0<r<l$. Both $a,b$ are fixed points for $f^l$.
Consider Fatou coordinates $\Psi^{\pm}$ of $0$ as a fixed point of $F^l-k$.
Then $\Psi^{-}$ extends to $(a,0)$ via iterates of $F^l-k$, and $\Psi^{+}$ extends to $(0,b)$.
We choose the normlization of $\Psi^{\pm}$ so that $\Psi^-(x)=0$ and $\Psi^+(f^r(x))=0$ for some $x\in (a,0)$.

Now, there are two coordinates defined on $(a,0)$, namely $\Psi^-$ and the Fatou coordinate of $a$ as the fixed point of $f^l$. The latter is the image of $\Psi^+$ under $f^{-r}$. We consider the transition map
$$\tilde {\mathbf K}:=\Psi^- (f^{-r} (\Psi^+)^{-1}).$$
It is well-defined on $\bbR$ and commutes with the shift by $1$, thus defines an analytic circle diffeomorphism; this is the map we need for Theorems \ref{th-main-1-kl} and \ref{th-main-kl}.

Another viewpoint on the same map is the following. Between the fundamental domains $[x, f^l(x)]\subset (a, 0)$ and $f^r ([x, f^l(x)])\subset (0,b)$, there are two natural maps: namely, $f^l$ and the Lavaurs map of $0$ for $F^l$. The map $\tilde {\mathcal K}$ in the appropriate chart is their difference. Formally, we have the following statement.
 \begin{remark}
 \label{rem-Lav-chart-kl}
  In the chart $\Psi^+$ on $(0,b)$, the map $\tilde {\mathbf K}+c$ turns into $$(\Psi^+)^{-1}T_c \Psi^- f^{-r}(\Psi^+)^{-1}\Psi^+  = (\Psi^+)^{-1} T_c\Psi^- f^{-r}=  L_c f^{-r}$$
  where $L_c$ is a Lavaurs  map for $F^l$.
 \end{remark}
The above construction is a generalization of the construction from Sec. \ref{ssec-K}, with $l=1$, $r=0$.

\begin{remark}
 The number $r$ introduced above coincides with the number $r$ from Theorem \ref{th-main-1-kl}. Indeed, the (periodic) orbit of $0$ under $f$ is ordered in the same way as any  orbit of the translation $T_{k/l} (x)=x+k/l$, because $\rot f = k/l$. For the translation, the closest point of the orbit of $0$ to $0$ is $1/l$. So the number $r$ above satisfies  $(T_{k/l})^{r}(x) = x+1/l$. This implies $kr \equiv 1 \mod l$, and there exists the only integer $r, 0<r<l,$ with this property. For this $r$, we have $kr -1 \equiv 0 \pmod l$, i.e. there exists an integer $s$ such that $kr+ls=1$. So this is the number $r$ from Theorem \ref{th-main-1-kl}.
\end{remark}
\subsection{Genericity of the additional restriction on $\textbf{K}$ in Theorem \ref{th-main} }
\label{sec-generic}
 Let $\mathbf K_f := \mathbf K$ be the transition map that corresponds to the parabolic diffeomorphism $f$. Recall that the additional assumption on $\textbf{K}$ in Theorem \ref{th-main} was as follows:
 \begin{equation}
\begin{aligned}
& \text{ ``For any $c$, whenever $\rot (\mathbf K_f+c)=p/q$, the map $\mathbf K_f+c$ has}\\
& \text{  at most one parabolic orbit''.}
\end{aligned}
\label{eq-cond}
\end{equation}
We will sketch the proof of the fact that this condition holds for a generic $f$. The same result holds true for $\tilde {\mathbf K}$.
 \begin{lemma}
For any fixed $p/q$, the set of parabolic diffeomorphisms $f$ that correspond to $\mathbf K_f$ satisfying
\eqref{eq-cond}  is open and dense in the metric of uniform convergence.
 \end{lemma}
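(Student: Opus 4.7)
I would split the statement into openness and density in the uniform-convergence metric on the space of parabolic diffeomorphisms $f$ satisfying the genericity assumptions preceding Theorem~\ref{th-main-1}. The main input throughout is that the transition map $\mathbf K_f$ depends continuously on $f$, via the continuous dependence of the Fatou coordinates $\Psi^{\pm}$ on $f$ (see~\cite{Shi}); in particular, small analytic perturbations of $f$ produce uniformly small perturbations of $\mathbf K_f$ on $\bbR/\bbZ$.

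\emph{Openness.} Assume $f$ satisfies \eqref{eq-cond}. If $I_{p/q}:=\{c\in\bbR : \rot(\mathbf K_f+c)=p/q\}$ is empty or a single point, openness at $f$ follows from the stability of this structure under small perturbations of the monotonic family $\mathbf K_f+c$. Otherwise $I_{p/q}$ is a non-degenerate interval containing finitely many parabolic parameters $c_1,\ldots,c_N$ at the endpoints of the intervals of hyperbolicity. By \eqref{eq-cond}, each $\mathbf K_f+c_j$ carries a unique parabolic cycle $O_j$, and strict monotonicity of the family forces $O_j$ to be of multiplicity $2$ for $(\mathbf K_f+c_j)^q$; any other cycles of rotation number $p/q$ at $c=c_j$ are hyperbolic. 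The implicit function theorem then guarantees that each multiplicity-$2$ parabolic cycle deforms to a unique multiplicity-$2$ parabolic cycle of $\mathbf K_{f'}+c$ at a parameter $c_j'$ close to $c_j$, and that hyperbolic cycles persist as hyperbolic; no new parabolic cycles can appear at $c_j'$ under a small perturbation. Hence \eqref{eq-cond} holds for $f'$.

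\emph{Density.} Suppose \eqref{eq-cond} fails: some $c_0$ carries two distinct parabolic cycles $O_1,O_2$ of $\mathbf K_f+c_0$. I would perturb $f$ so that the parameters $c^{(1)}(f)$ and $c^{(2)}(f)$ at which $O_1,O_2$ are parabolic separate; iterating finitely many times produces an arbitrarily close $f'$ satisfying \eqref{eq-cond}. Concretely, lift a point of each $O_i$ via $(\Psi^+)^{-1}$ to points $x_1,x_2\in(0,1)$; the $f$-orbits of $x_1,x_2$ are disjoint and stay away from the parabolic point $0$. I would take a real-analytic one-parameter family $f_t$ with $f_0=f$, lying in the genericity class, whose variation $\partial_t f_t|_{t=0}$ is essentially localized near the orbit of $x_1$ and decays rapidly away from it; this is possible in the analytic category using entire Gaussian-like kernels. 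Differentiating the multiplier conditions for $O_1,O_2$ along $f_t$ then yields $\partial_t c^{(1)}(0)\neq 0$ for generic such perturbations, while $\partial_t c^{(2)}(0)$ can be made arbitrarily small, so the two parameters separate for small $t>0$.

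\emph{Main obstacle.} The technical heart is the density step: computing the derivative of $c^{(i)}(t)$ at $t=0$ under a localized perturbation of $f$, and establishing both nonvanishing and approximate independence for the two disjoint cycles. This requires a transversality computation in the Ecalle cylinder, using the smooth dependence of $\Psi^{\pm}$ on the analytic data of $f$ off of the parabolic point, together with the fact that $O_1$ and $O_2$ project to disjoint orbits in the fundamental domain. Once this variation formula is in place, the openness half is a routine application of the implicit function theorem at each $c_j$.
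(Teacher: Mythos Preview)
Your openness argument is morally the same as the paper's: both rest on the continuity of $f\mapsto\mathbf K_f$ via the continuous dependence of Fatou coordinates. The paper simply observes that \eqref{eq-cond} cuts out an open set in the space of circle diffeomorphisms $\mathbf K$, and then pulls back by continuity; your version tries to make this explicit via the implicit function theorem. One caution: monotonicity of $c\mapsto\mathbf K_f+c$ does \emph{not} by itself force each parabolic cycle to have multiplicity~$2$, so that step of your argument needs more care, but this is a minor point.

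The density argument is where your proposal diverges from the paper and where there is a genuine gap. The paper does not attempt any transversality computation; instead it proves the stronger structural fact that $f\mapsto\mathbf K_f$ is an \emph{open} map. The mechanism is a cut-and-reglue construction: keep $F$ fixed on a fundamental interval $[x,F(x+1)]$, but change the gluing $T_{-1}$ to an arbitrary analytic $h\approx T_{-1}$ commuting with $F$. This produces a new circle and a new diffeomorphism $\tilde f=H^{-1}fH$ close to $f$, and one computes directly that $\mathbf K_{\tilde f}=\Psi^-\,h\,(\Psi^+)^{-1}$. Since $h$ is arbitrary, every small perturbation of $\mathbf K_f$ is realized. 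The closeness of $H$ to the identity is handled by Ahlfors--Bers.

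Your plan instead tries to separate the two parabolic parameters $c^{(1)},c^{(2)}$ by perturbing $f$ ``locally'' near the $f$-orbit of one of the lifted cycle points. This runs into two difficulties you have not addressed. First, the points $x_1,x_2=(\Psi^+)^{-1}(\text{cycle points})$ lie in $(0,1)$ and their forward $f$-orbits accumulate on the parabolic fixed point $0$; they do not ``stay away'' from it, and the Fatou coordinates $\Psi^\pm$ are built from infinitely many iterates of $f$ near $0$. A perturbation of $f$, however concentrated, changes $\Psi^\pm$ and hence $\mathbf K_f$ in a global, nonlocal way. Second, analytic maps on the circle cannot be genuinely localized; Gaussian-like kernels are neither periodic nor compactly supported, and any analytic bump has tails that interact with both orbits. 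So the heuristic ``$\partial_t c^{(1)}\neq 0$ while $\partial_t c^{(2)}$ is small'' is not justified, and the variation formula you allude to would have to track the full dependence of $\Psi^\pm$ on $f$, which you have not set up. The paper's cut-and-reglue argument bypasses all of this by working directly at the level of $\mathbf K_f$.
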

\begin{proof}[Sketch of the proof]
Clearly, \eqref{eq-cond} defines an open and dense set in the space of all circle diffeomorphisms. So we will focus on the map $f \mapsto \mathbf K_f$, to see that the set of corresponding $f$ is open and dense as well.

 It is well-known (see \cite[Proposition 2.5.2 (iii)]{Shi}) that Fatou coordinates depend continuously on a (parabolic) map $F$. Thus the mapping $f \mapsto \mathbf K_f$ is continuous. Therefore the set under consideration is open.

Now it suffices to prove that we may perturb any initial circle diffeomorphism $f$ to achieve an arbitrary perturbation of  $\mathbf K_f$ (i.e. that the mapping $f\mapsto \mathbf K_f$ is open). This will imply the statement.

The idea of the proof is the following: we fix the diffeomorphism $f$, cut the circle where $f$ acts, and glue again, with the gluing close to identical. This produces a new circle diffeomorphism on a ``new'' circle. In an appropriate chart on the ``new'' circle, the new diffeomorphism is close to $f$. The transition map $\mathbf K_f$ between Fatou coordinates  changes in a controllable way which implies the statement.

In more detail, fix $F$ and a point $x, -1<x<0$. Consider the interval $I=[x, F(x+1)]$ with the map  $F|_{[x,x+1]}$ on it. Here we ``cut'' the circle where $f$ acts: namely, $F|_{[x,x+1]}$ induces $f$ on $I/T_1\sim \bbR/\bbZ$. Choose another gluing: take any analytic map $h\approx T_{-1}$ that takes a neighborhood of $[x+1, F(x+1)]$ to a neighborhood of $[x, F(x)]$ and commutes with $F$; consider the quotient $I/h$ (``new'' circle). This quotient is a one-dimensional real-analytic manifold homeomorphic to a circle, thus it \emph{is} a circle: there exists a real analytic map $H\colon \bbR/\bbZ \to I/h$. Its lift $\tilde H \colon \bbR\to I$ conjugates $h$ to the shift by $(-1)$, $H(x-1) = h(H(x))$. We will see that $H$ is close to identity.

 Thus $\tilde f= H^{-1}fH$ is an analytic circle diffeomorphism close to $f$. Its Fatou coordinates are $\Psi^{\pm}H$, and the corresponding transition map is $\mathbf K_{\tilde f} = \Psi^-H (H^{-1}\Psi^+)^{-1}-1) = \Psi^-(h(H H^{-1}(\Psi^+)^{-1})) =\Psi^- h(\Psi^+)^{-1} $. Since $h$ is an arbitrary map close to the shift $T_{-1}$ that commutes with $F$, we may achieve any perturbation of $\mathbf K_f$.

 It remains to prove that  $H$ is close to identity. The proof relies on Ahlfors-Bers theorem, and we only sketch it here. Namely, we consider a neighborhood $V$ of $I$; $V/h$ is an annulus. It is easy to find a smooth map $R_1$ close to $id$ in $C^2$, such that $R_1$ takes $V/h$ to a standard annulus $A$ and commutes with $z \to \bar z$. Now, $R_1$ induces a conformal structure on $A$. It is close to the standard conformal structure, hence  due to Ahlfors-Bers theorem, there exists a quasiconformal map $R_2\approx id$ that uniformizes this conformal structure. The uniqueness of the (normalized) uniformization implies that $R_2$ commutes  with $z \mapsto \bar z$. Finally, we take $H := R_1^{-1} R_2^{-1}$, which is close to identity because both $R_1, R_2$ are close to identity, and $H$ preserves the real axis because it commutes with $z \mapsto \bar z$.
This completes the proof.
\end{proof}

\subsection{Lavaurs theorem}
The following theorem was proved by Lavaurs \cite{Lav}, see also \cite[Proposition 3.2.2]{Shi}.
%
\begin{theorem}
\label{th-Lav-1}
Let  $F_{\eps}$, $\eps\in\bbR$, be an analytic family of analytic maps in a neighborhood of zero, satisfying $F_0(z)= z+az^2+\dots$, $a\neq 0$. Let $L_c$ be Lavaurs maps for $F_0$.

Suppose that $F_{\eps}$ has two complex hyperbolic fixed points near zero for $\eps>0$ small. Assume that both multipliers $\mu_{1,2}$ of these points satisfy $\arg |\frac {1}{2\pi i}\log \mu_{1,2}|<\pi/4 $.
Suppose that $\eps_k\to 0$ and $n_k\to \infty $ satisfy
\begin{equation}
\label{eq-Lav}
 \lim_{k\to \infty} F^{n_k}_{\eps_k}(x) = L_c(x).
\end{equation}
Then $F_{\eps_k}^{n_k}\to L_c$ uniformly on compact sets in $U^-$.
\end{theorem}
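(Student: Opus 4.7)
The strategy follows Shishikura's approach to parabolic implosion (see \cite{Shi}): construct \emph{perturbed Fatou coordinates} $\Psi^{\pm}_{\eps}$ for the maps $F_{\eps}$ with $\eps > 0$ small, show they converge (after suitable normalization) to the Fatou coordinates $\Psi^{\pm}$ of $F_0$ as $\eps \to 0^+$, and then read off the conclusion from the resulting closed-form expression for $F^{n_k}_{\eps_k}$.

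The first step is where the real work lies, and this is the main obstacle. For $\eps > 0$ small, $F_{\eps}$ has two hyperbolic fixed points $\alpha_{\eps}, \beta_{\eps}$ near $0$ with multipliers $\mu_{1,2}$; the argument condition $\bigl|\arg \tfrac{1}{2\pi i}\log \mu_{1,2}\bigr| < \pi/4$ guarantees the existence of a ``gate'' region between them on which $F_{\eps}$ moves points through roughly one fundamental domain per iteration, so that the quotient by $F_{\eps}$ is biholomorphic to a cylinder with a well-defined notion of translation by one. Patching together Koenigs linearizations near each fixed point with this cylinder uniformization yields globally defined attracting and repelling Fatou coordinates $\Psi^{\pm}_{\eps}$ satisfying $\Psi^{\pm}_{\eps} \circ F_{\eps} = \Psi^{\pm}_{\eps} + 1$ on their domains. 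After normalizing them to match the normalization of $\Psi^{\pm}$, the Douady--Lavaurs--Shishikura convergence lemma gives $\Psi^{\pm}_{\eps_k} \to \Psi^{\pm}$ uniformly on compact subsets of $U^{\mp}$.

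Granting this convergence, the endgame is short. For $\eps > 0$ both $\Psi^+_{\eps}$ and $\Psi^-_{\eps}$ conjugate $F_{\eps}$ to the unit translation on the gate, so they differ there by a constant $C_{\eps} \in \bbC$. Iterating the functional equation, for $z$ in the domain of $\Psi^-_{\eps_k}$ and $n_k$ large enough that $F^{n_k}_{\eps_k}(z)$ lies in the domain of $(\Psi^+_{\eps_k})^{-1}$, one obtains
$$F^{n_k}_{\eps_k}(z) \;=\; (\Psi^+_{\eps_k})^{-1}\bigl(\Psi^-_{\eps_k}(z) + n_k + C_{\eps_k}\bigr).$$
Evaluating at the given point $x$ and using the hypothesis $F^{n_k}_{\eps_k}(x) \to L_c(x) = (\Psi^+)^{-1}(\Psi^-(x)+c)$ together with the uniform convergence of the perturbed Fatou coordinates, one finds that $n_k + C_{\eps_k} \to c$.

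Substituting this back into the displayed formula and using the uniform convergences $\Psi^{-}_{\eps_k} \to \Psi^{-}$ on compact subsets of $U^-$ and $(\Psi^{+}_{\eps_k})^{-1} \to (\Psi^{+})^{-1}$ on compact subsets of the relevant image, one concludes $F^{n_k}_{\eps_k}(z) \to (\Psi^+)^{-1}(\Psi^-(z)+c) = L_c(z)$ uniformly on compact subsets of $U^-$, which is the claim.
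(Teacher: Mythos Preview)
The paper does not prove Theorem~\ref{th-Lav-1}; it is quoted as a known result of Lavaurs \cite{Lav} (see also \cite[Proposition 3.2.2]{Shi}), and the paper only remarks that the multiplier hypothesis is satisfied in the situation at hand. Your sketch follows exactly the Shishikura approach from the cited reference---construct perturbed Fatou coordinates $\Psi^{\pm}_{\eps}$, prove they converge to $\Psi^{\pm}$, write $F^{n_k}_{\eps_k}=(\Psi^+_{\eps_k})^{-1}\circ T_{n_k+C_{\eps_k}}\circ\Psi^-_{\eps_k}$, extract $n_k+C_{\eps_k}\to c$ from the single-point hypothesis, and pass to the limit---so there is nothing to compare: you have outlined the standard proof that the paper defers to the literature.
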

The above assumption on multipliers for our family $F_{\eps}$ follows from our genericity assumption $\frac{\partial F}{\partial \eps}(0)>0$.
See also \cite[Propositions 13.1 and  18.2]{Dou} for the particular case $F_{\eps}(z)=z+z^2+\eps$ which does not essentially differ from the general case.

\section{Proof of the main theorem}
\subsection{Renormalizations and complex rotation numbers}

It is well-known that the renormalization of a circle diffeomorphism with rotation number $\rho, 0<\rho<1$, has the rotation number $-1/\rho$.
It turns out that the same result holds for complex rotation numbers. First, we recall the definition of renormalization.

 Take a fundamental domain $[x,g(x)]$ of an analytic circle diffeomorphism $g$ with no fixed points. The first return map to this domain under iterates of $g$ commutes with $g$, hence it descends to the quotient $[x,g(x)] / g$. Such self-map $\mathcal R g$ of  $[x,g(x)] / g$ is called  the \emph{renormalization} of $g$.

 Clearly, the quotient is a one-dimensional real-analytic manifold analytically equivalent to a circle. On this circle, $\mathcal R g$ is an analytic circle diffeomorphism. It is well-defined up to an analytic coordinate change.

 There is no canonical choice of an analytic chart on the circle $[x,g(x)] / g$. However the complex rotation number does not depend on the choice of the analytic chart, due to the following lemma (for the proof, see \cite[Lemma 8]{NG-inters} or Remark \ref{rem-depend-on-g+omega} below).
\begin{lemma}
\label{lem-chart}
 Complex rotation number $\bar \tau|_{\bbR}$ is invariant under analytic conjugacies:
for two analytically conjugate circle diffeomorphisms $f_1,f_2$, we have $\bar \tau(f_1) = \bar \tau(f_2)$.
\end{lemma}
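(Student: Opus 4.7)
The plan is to reduce to the three cases of Theorem \ref{th-XB_NG-1} according to the type of $f_1$ (equivalently $f_2$, since analytic conjugacy preserves both the rotation number and the parabolic versus hyperbolic character of periodic orbits).

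Cases (1) and (2), in which either $\rot f_1$ is irrational or $f_1$ admits a parabolic periodic orbit, are immediate: Theorem \ref{th-XB_NG-1} gives $\bar\tau(f_i) = \rot f_i$ for $i = 1, 2$, and since $\rot$ is preserved under any topological conjugacy we conclude $\bar\tau(f_1) = \rot f_1 = \rot f_2 = \bar\tau(f_2)$.

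The substantive case is the hyperbolic one. Here I would invoke the explicit construction of $\bar\tau$ on intervals of hyperbolicity recalled in Sec.~\ref{sec-Buff} (cf.\ \cite[Sec.~5]{XB_NG}), which realises $\bar\tau(F)$ as the modulus of a genuine complex torus $\mathcal E_{\mathrm{hyp}}(F)$ obtained from a suitable $F$-invariant complex neighbourhood of $\bbR$ by quotienting by the actions $z \mapsto z+1$ and $z \mapsto F(z)$ and then compactifying by gluing in Koenigs charts at each hyperbolic periodic orbit; this torus comes equipped with the two distinguished generators of $H_1$, namely the class of $\bbR/\bbZ$ and the class of $[0, F(0)]$. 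Writing $f_2 = h^{-1} f_1 h$ and lifting to $F_2 = H^{-1} F_1 H$ with $H$ an analytic extension that preserves $\bbR$ and satisfies $H(z+1) = H(z)+1$, the map $H$ visibly intertwines the two deck group actions; it is also compatible with the Koenigs linearisations at the periodic orbits (since the Koenigs coordinate is canonical up to an affine factor that is absorbed by the normalisation of the charts); hence it descends to a biholomorphism $\mathcal E_{\mathrm{hyp}}(F_2) \to \mathcal E_{\mathrm{hyp}}(F_1)$ sending canonical generators to canonical generators. Consequently the two surfaces have the same modulus, i.e.\ $\bar\tau(F_1) = \bar\tau(F_2)$, and reduction mod $1$ gives $\bar\tau(f_1) = \bar\tau(f_2)$.

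The main obstacle is the compatibility of $H$ with the compactification at each hyperbolic periodic orbit; this is where one truly uses the naturality of the Koenigs coordinate under analytic conjugacy of hyperbolic germs. It is tempting to sidestep the hyperbolic construction altogether by approximating $\omega = 0$ by a sequence $\omega_k \in \bbH$ and exploiting continuity of $\bar\tau_F$ up to the boundary, but this fails: for $\omega \ne 0$ the diffeomorphisms $f_1 + \omega$ and $f_2 + \omega$ are no longer conjugate, so the tori $E(F_i + \omega_k)$ admit no direct comparison. The hyperbolic construction of $\bar\tau$ is therefore not a convenience but the correct framework in which the invariance manifests itself.
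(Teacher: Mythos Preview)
Your approach is correct and matches the paper's: the non-hyperbolic cases are disposed of via $\bar\tau = \rot$, and the hyperbolic case is handled by observing that the torus $\mathcal E(g)$ of Sec.~\ref{sec-Buff} is a conjugacy invariant (this is exactly the content of Remark~\ref{rem-depend-on-g+omega}, which the paper cites as the proof).

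One minor point: your description of the construction in Sec.~\ref{sec-Buff} is not quite accurate. The paper does \emph{not} build $\mathcal E(g)$ by quotienting a punctured neighbourhood and then compactifying via Koenigs charts; rather, it directly takes an annulus $\Pi$ between a curve $\gamma$ and its image $g(\gamma)$ and forms the quotient $\Pi/g$, which is already a compact torus. The Koenigs charts enter only in the \emph{choice} of the curve $\gamma$ (arcs of circles in linearising coordinates), not in any compactification step. With the paper's construction, the ``main obstacle'' you identify evaporates: if $f_2 = h^{-1} f_1 h$, one may simply take $\gamma_2 := h^{-1}(\gamma_1)$, and then $h$ carries the annulus $\Pi_2$ biholomorphically to $\Pi_1$ while intertwining the gluing maps $f_2$ and $f_1$, so it descends to a biholomorphism $\mathcal E(f_2) \to \mathcal E(f_1)$ respecting generators. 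No compatibility with a compactification needs to be checked.
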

So $ \bar \tau(\mathcal R g)$ is well-defined. The following lemma relates $\bar \tau(\mathcal R g)$ to $\bar \tau(g)$. Recall that $R(z)=-1/z$.

\begin{lemma}[Complex rotation numbers under renormalizations]
\label{lem-renorm}
Let $g$ be an analytic circle diffeomorphism with no fixed points, let $G$ be its lift to the real line with $0<\rot G<1$. Then
\begin{equation}
\label{eq-Rg-g}
\bar \tau (\mathcal R  g) \equiv -\frac{1}{ \bar \tau(G)} \equiv R( \bar \tau(G))  \pmod 1.
\end{equation}
\end{lemma}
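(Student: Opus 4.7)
The approach is to verify the analogous identity for $\omega\in\bbH$ (where both sides are moduli of honest complex tori) and then pass to the boundary. For $\omega$ in the upper half-plane the torus $E(G+\omega)$ admits two presentations of its first homology, interchanged by the modular $S$-transformation $\tau\mapsto -1/\tau$, and the dual presentation is naturally realized via the Fatou coordinate of $G+\omega$, through which the renormalization $\mathcal R g$ appears on the nose.

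In detail, for $\omega\in\bbH$ the map $G+\omega$ acts freely on $\Pi^\eps$ (since $\Im\omega>0$), so its Fatou/linearization coordinate $\Phi_\omega$ exists and satisfies $\Phi_\omega\circ(G+\omega)=T_1\circ\Phi_\omega$. Under $\Phi_\omega$, the pair $(T_1,T_{G+\omega})$ of deck transformations of the cover $\Pi^\eps\to E(G+\omega)$ is conjugated to $(K_\omega,T_1)$, where $K_\omega:=\Phi_\omega T_1\Phi_\omega^{-1}$ commutes with $T_1$. At $\omega=0$ the real linearization $\Phi_0$ of $G$ exists (since $\rot G\in(0,1)$ forces $G$ to have no fixed points) and the restriction of $K_0^{-1}$ to the real axis is precisely the renormalization $\mathcal R g$, as a short direct computation shows.

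Applying the $S$-matrix $\bigl(\begin{smallmatrix}0 & 1\\ -1 & 0\end{smallmatrix}\bigr)\in SL_2(\bbZ)$ to the Fatou-chart basis $(K_\omega,T_1)$ gives the dual basis $(T_1,K_\omega^{-1})$, whose modulus is $-1/\tau(G+\omega)$. The central step is to identify, for $\omega\in\bbH$ sufficiently close to $0$, the torus with dual basis $(T_1,K_\omega^{-1})$ with a standard torus $E(\mathcal R g+\omega'(\omega))$, for a holomorphic function $\omega\mapsto\omega'(\omega)\in\bbH$ with $\omega'(\omega)\to 0$ non-tangentially as $\omega\to 0$. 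Granting this identification, one gets
\[
 \tau(\mathcal R g+\omega'(\omega))\equiv -\frac{1}{\tau(G+\omega)} \pmod 1.
\]
Letting $\omega\to 0$ non-tangentially in $\bbH$ and using the continuity of $\bar\tau$ (Theorem \ref{th-XB_NG-1}) gives $\tau(G+\omega)\to\bar\tau(G)$ and $\tau(\mathcal R g+\omega'(\omega))\to\bar\tau(\mathcal R g)$, which passes to the limit to yield the lemma.

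The main technical obstacle is the identification of the Fatou-coordinate dual torus with $E(\mathcal R g+\omega'(\omega))$, with holomorphic dependence and correct boundary behavior of $\omega'$. This step amounts to a perturbative analysis of $K_\omega^{-1}$ as a holomorphic family of maps near the real circle diffeomorphism $\mathcal R g$; the flexibility granted by Lemma \ref{lem-chart} (invariance of $\bar\tau$ under analytic conjugacy) allows one to absorb any ``trivial'' part of the perturbation into a change of chart and isolate the essential complex parameter $\omega'(\omega)$, whose boundary value at $\omega=0$ is zero because $\Phi_\omega\to\Phi_0$ there.
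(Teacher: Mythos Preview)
Your strategy---pass to $\omega\in\bbH$, swap the basis via a linearizing coordinate $\Phi_\omega$ for $G+\omega$, and take the limit $\omega\to 0$---is natural, and your observation that $K_0^{-1}=\Phi_0 T_{-1}\Phi_0^{-1}$ is a chart for $\mathcal R g$ is correct. But the step you yourself label the ``main technical obstacle'' is a genuine gap, not a routine perturbation argument. You need to identify the dual-basis torus with some $E(\mathcal R g+\omega'(\omega))$, $\omega'\in\bbH$; however $K_\omega^{-1}$ is an arbitrary holomorphic perturbation of $K_0^{-1}$, not of the special form $h+\omega'$ with $h$ real. Lemma~\ref{lem-chart} is about \emph{real} analytic conjugacies of \emph{real} circle maps and gives you no leverage to absorb the complex part of $K_\omega^{-1}-K_0^{-1}$ into a chart; extracting a single parameter $\omega'(\omega)\in\bbH$ with $\omega'\to 0$ and $\tau_{\mathcal R g}(\omega')=-1/\tau_G(\omega)$ is essentially equivalent to already knowing that $-1/\bar\tau(G)=\bar\tau(\mathcal R g)$, i.e.\ to the lemma itself. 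Even the preliminary claim $\Phi_\omega\to\Phi_0$ uniformly on a fixed complex neighborhood of $\bbR$ is not addressed.

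The paper bypasses all of this by working directly at the boundary. It first dispatches the cases $\rot g\notin\bbQ$ and $g$ parabolic in one line via $\bar\tau=\rot$ and the classical identity $\rot(\mathcal R g)\equiv -1/\rot G$. For the only substantive case, $g$ hyperbolic, it uses Buff's explicit torus $\mathcal E(g)=\Pi/g$ from Section~\ref{sec-Buff}: the uniformizing map $\hat H$ of $\mathcal E(g)$ extends across a wider annulus $\Pi^{(n+1)}$ and, restricted to a suitably chosen fundamental domain for $\mathcal R g$, simultaneously uniformizes $\mathcal E(\mathcal R g)$---with the two homology generators interchanged. The identity $\bar\tau(\mathcal R g)\equiv -1/\bar\tau(G)$ is then read off from a single picture, with no limiting procedure and no auxiliary family $\mathcal R g+\omega'$.
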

The proof is postponed till Sec. \ref{sec-renorm-proof}.
\subsection{Renormalization at a rational point}
If the rotation number of a circle diffeomorphism $g$ is close to $k/l$, it is reasonable to consider the following $k/l$-renormalization of $g$.

Suppose that $\rot g>k/l$ is sufficiently close to $k/l$.  We consider the first-return map under $g$ to the segment $[x, g^l(x)]$. This map descends to the well-defined map $\mathcal R ^{k/l}g$ on $[x, g^l(x)]/g^l$. Then $\mathcal R^{k/l}g$ is called the $k/l$-\emph{renormalization} of $g$.
It is induced by some powers of $g$; the following lemma gives an explicit form of the first-return map.
\begin{lemma}
 \label{lem-renorm-kl}
Let $k/l\in \bbQ$ be an irreducible fraction, and let $r$, $0<r<l$, be such that $kr \equiv 1 \mod l$. Let $g$ be a circle diffeomorphism with $\rot g>k/l$ sufficiently close to $k/l$.

Then for some $n$, the first-return map on $[x,g^l(x)]$ under $g$ has the form $g^{nl-r}$ on $[x,a]$ and $g^{(n-1)l-r}$ on $[a, g^l(x)]$, where $a = g^{l(1-n)+r}(x)$.
 \end{lemma}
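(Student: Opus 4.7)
The plan is to reduce the statement to an explicit combinatorial computation for the rigid rotation $T_\rho$, where $\rho:=\rot g$.

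\textbf{Setup and combinatorial reduction.} Choose the lift $G$ of $g$ with $\rho=\rot G\in(k/l,\,k/l+\delta)$ for $\delta$ small, and set $\eps:=\rho-k/l>0$. Then $H:=G^l-k$ is a lift of $g^l$ with $\rot H=l\eps>0$, so $H(y)>y$ for all $y$, and $[x,\,H(x)]$ projects to the arc $[x,\,g^l(x)]$ of length $\approx l\eps$ on $\bbR/\bbZ$. The cyclic order of each orbit $\{g^j(y)\bmod 1\}_{j\in\bbZ}$ on $\bbR/\bbZ$ coincides with that of $\{T_\rho^j(y)\}_{j\in\bbZ}$ (Denjoy's theorem, for analytic $g$ with irrational $\rho$; the rational case requires a separate argument, see below). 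Hence the combinatorics of the first-return map to $[x,g^l(x)]$ under $g$ agrees with that of $T_\rho$ on $[0,l\eps]$, and it suffices to analyze the latter.

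\textbf{Identifying the fast-return class.} For $j=ml+r'$ with $0\le r'<l$, a direct computation gives $T_\rho^j(0)\equiv(r'k\bmod l)/l+(ml+r')\eps\pmod 1$. For this to lie in the narrow interval $[0,l\eps]$, the rational part $(r'k\bmod l)/l$ must be within $O(\eps)$ of an integer mod $1$, forcing $r'k\equiv 0$ or $\pm 1\pmod l$, i.e.\ $r'\in\{0,\,r,\,l-r\}$ (since $kr\equiv 1\bmod l$). Comparing the required magnitudes of $m$: the class $r'=l-r$ gives return times of order $1/(l\eps)$, strictly smaller than the orders $(l-1)/(l\eps)$ and $1/\eps$ obtained from $r'=r$ and $r'=0$ respectively. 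Thus the first-return time has the form $j=nl-r$ with $n=m+1$.

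\textbf{The two branches.} Writing $kr=1+sl$, the modular shift is $A_n:=T_\rho^{nl-r}(0)\bmod 1=(-1/l+(nl-r)\eps)\bmod 1$. Since $A_{n+1}-A_n\equiv l\eps\pmod 1$, there is a unique $n$ with $A_n\in[0,l\eps)$. For this $n$, the rotation $y\mapsto y+A_n$ sends $[0,\,l\eps-A_n]$ into $[A_n,l\eps]\subset[0,l\eps]$, while $y\mapsto y+A_{n-1}\bmod 1$ (with $A_{n-1}=A_n-l\eps+1\in[1-l\eps,1)$) sends the complementary subinterval $[l\eps-A_n,\,l\eps]$ into $[0,A_n]$. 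Therefore the first-return map on $[x,g^l(x)]$ is $g^{nl-r}$ on $[x,a]$ and $g^{(n-1)l-r}$ on $[a,g^l(x)]$, with $a:=x+l\eps-A_n$; the identity $g^{(n-1)l-r}(a)=x$ then yields $a=g^{l(1-n)+r}(x)$ as claimed.

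\textbf{Main obstacle.} The delicate point is the combinatorial reduction of the first paragraph. Denjoy's theorem handles irrational $\rho$ directly; for rational $\rho$ sufficiently close to $k/l$, one argues instead that a sufficiently long initial segment of each orbit of $g$ (long enough to detect the first return) is cyclically ordered on the circle in exactly the same way as for $T_\rho$. This can be done using the monotonicity of orientation-preserving homeomorphisms of $\bbR/\bbZ$ together with $\rot g^l=l\eps>0$, which in particular ensures $g^l$ has no fixed points. Once the combinatorial skeleton is fixed, the remainder is a routine arithmetic check with the progression $(A_n)$.
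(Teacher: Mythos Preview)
Your approach---reducing to the rigid rotation $T_\rho$ and computing first-return times by residue class modulo $l$---is different from the paper's, which instead invokes continued-fraction theory (identifying $k/l$ as a convergent $p_m/q_m$ of $\rho$, then showing $q_{m-1}=l-r$ so that the next closest-return time $q_{m+1}=a_{m+1}q_m+q_{m-1}$ has the form $Nl-r$). Your route is more elementary and would work, but as written it has two genuine gaps.

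\textbf{First gap (fast-return class).} The sentence ``the rational part $(r'k\bmod l)/l$ must be within $O(\eps)$ of an integer, forcing $r'k\equiv 0$ or $\pm 1$'' is not correct: every residue class $r'$ produces returns, just at different times. What you actually need is that for each $r'$ with $c:=(r'k\bmod l)/l$, the smallest return time in that class is $\approx(1-c)/\eps$, and this is minimized over \emph{all} $l$ residue classes precisely when $c=(l-1)/l$, i.e.\ $r'=l-r$. Comparing only the three classes $\{0,r,l-r\}$ is not sufficient (and your stated criterion does not even single out $r'=r$, since $1/l$ is not $O(\eps)$-close to an integer). This is easy to fix, but the argument as written does not establish the claim.

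\textbf{Second gap (two branches).} You verify that $T_\rho^{nl-r}$ and $T_\rho^{(n-1)l-r}$ map the two subintervals back into $[0,l\eps]$, but this does not by itself show they are the \emph{first} returns for every point of those subintervals. The paper handles this via a separate general lemma (Lemma~\ref{lem-firstreturn}): if $g^b(x)$ is the first orbit point of $x$ in $I=[x,g^a(x)]$ with $a<b$, then the first-return map is $g^b$ on $[x,g^{a-b}(x)]$ and $g^{b-a}$ on the complement. You should either invoke or prove this fact; with $a=l$ and $b=nl-r$ it yields exactly the branch structure you assert, including the formula $a=g^{l(1-n)+r}(x)$.

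The combinatorial reduction for rational $\rho$ (your ``main obstacle'') is handled in the paper by perturbing $g$ to irrational rotation number without changing the cyclic order of an initial orbit segment; your sketch is in the same spirit and is adequate.
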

 The proof is postponed till Sec. \ref{sec-proof-lem-kl}.

The following lemma is an analogue of Lemma \ref{lem-renorm}.
\begin{lemma}
\label{lem-renorm-1}
Let $g$ be a circle diffeomorphism with $\rot g >k/l$ sufficiently close to $k/l$. Let $G$ be its lift to the real line such that $\rot (G^l-k)\in (0,1)$. Let $r,s$ be integer numbers such that $rk+ s l =1$.  Then
 $$\bar \tau (\mathcal R^{k/l}g) \equiv \frac{r\bar \tau (G) +s}{-l\bar \tau (G)+k} \pmod 1.$$
\end{lemma}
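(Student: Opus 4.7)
My plan is to identify the complex torus $E(\mathcal{R}^{k/l}g+\omega')$ with $E(G+\omega)$ equipped with a different pair of generators of $H_1$; the formula then follows from the standard rule that the modulus of a lattice transforms under a change of basis in $SL(2,\bbZ)$ by the corresponding M\"obius transformation. This mirrors the proof of Lemma \ref{lem-renorm}, but demands more careful homological book-keeping because the $k/l$-renormalization mixes several iterates of $g$.

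I first reduce to the case of hyperbolic $g$ with $\rot g>k/l$ close to $k/l$, so that $\tau:=\bar\tau(G)\in\bbH$ is the modulus of a genuine torus $E(G+\omega)$ constructed as in Section \ref{sec-Buff}, with distinguished generators $\alpha=[\bbR/\bbZ]\mapsto 1$ and $\beta=[0,G(0)+\omega]\mapsto\tau$; the general case follows by continuity of $\bar\tau$ (Theorem \ref{th-XB_NG-1}) together with Lemma \ref{lem-chart}. The torus $E(\mathcal{R}^{k/l}g+\omega')$ is built from a complex neighborhood of $[x,g^l(x)]$ by gluing horizontally along $z\sim G^l(z)-k$ (which turns $[x,g^l(x)]$ into the new circle) and vertically along a lift $\tilde P$ of the first-return map. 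Both gluings are deck transformations in the universal cover of $E(G+\omega)$, and the subgroup they generate has index equal to the absolute value of the determinant of the basis-change matrix, which will turn out to be $1$; so the natural inclusion descends to a biholomorphism $E(\mathcal{R}^{k/l}g+\omega')\to E(G+\omega)$.

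Under this identification the new horizontal generator $\alpha'$ corresponds to the homology class $l\beta-k\alpha$ (mapping to $l\tau-k$), because $G^l-k$ is the deck transformation $T_\beta^l T_1^{-k}$. For the new vertical generator $\beta'$, Lemma \ref{lem-renorm-kl} tells me that on $[x,a]$ the first-return map is $g^{nl-r}$ for some $n$, so $\tilde P(z)=G^{nl-r}(z)-j_1$ on $[x,a]$, where the integer $j_1$ is the one placing $\tilde P(x)$ back in the lifted fundamental interval $[x,G^l(x)-k]$. Since $G^{nl-r}(x)\approx x+(nl-r)(k/l)=x+nk-rk/l$ and $rk/l=1/l-s$ (from $rk+ls=1$), the only integer fitting the bracket is $j_1=nk+s$. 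Hence the class of $\tilde P$ on $[x,a]$ is
\[
(nl-r)\beta-(nk+s)\alpha \;=\; n\alpha'-(r\beta+s\alpha),
\]
so $\beta'\equiv -(r\beta+s\alpha)\pmod{\alpha'}$. A parallel count on $[a,g^l(x)]$, using $g^{(n-1)l-r}$, yields the class $(n-1)\alpha'-(r\beta+s\alpha)$, confirming that the two pieces define the same class modulo $\alpha'$ and that $\beta'$ is well-defined on $E(\mathcal{R}^{k/l}g+\omega')$.

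The basis change $(\alpha,\beta)\mapsto(\alpha',\beta')$ is therefore given by the matrix $\bigl(\begin{smallmatrix}-k&l\\-s&-r\end{smallmatrix}\bigr)$, of determinant $kr+ls=1$, and applying the standard M\"obius rule yields $\bar\tau(\mathcal{R}^{k/l}g)\equiv(r\tau+s)/(-l\tau+k)\pmod 1$; the $\pmod 1$ reflects exactly the freedom to replace $\beta'$ by $\beta'+m\alpha'$, i.e.\ to change the lift of $\mathcal{R}^{k/l}g$. The main obstacle is the homological book-keeping that identifies $j_1=nk+s$: it requires combining the combinatorics of Lemma \ref{lem-renorm-kl} with the Bezout identity $kr+ls=1$ and keeping signs and orientations straight so that the final basis change indeed has determinant $+1$ (and not $-1$, which would produce $\tau$ in the lower half-plane).
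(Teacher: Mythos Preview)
Your approach is essentially the paper's: identify $\mathcal E(\mathcal R^{k/l}g)$ with $\mathcal E(g)$ equipped with a new basis of $H_1$, then read off the M\"obius transformation. The paper, however, takes a shortcut you do not: once it knows the new horizontal generator corresponds to $l\tau-k$ (via the uniformizing map $\hat H$ that conjugates $G$ to $z\mapsto z+\tau$), it simply observes that $\{l\tau-k,\,-r\tau-s\}$ is a $\bbZ$-basis of $\bbZ+\tau\bbZ$ and concludes. It never computes the second generator through the first-return map or the integer $j_1$; the orientation (hence the sign) is fixed by the requirement $\bar\tau\in\bbH$, and the $\pmod 1$ absorbs the remaining ambiguity. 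Your explicit tracking of $\beta'$ via Lemma~\ref{lem-renorm-kl} is extra work that the paper avoids.

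Two small points to tighten. First, your reduction to the hyperbolic case ``by continuity (Theorem~\ref{th-XB_NG-1})'' is not what that theorem gives: it is continuity of $\omega\mapsto\bar\tau_F(\omega)$, not continuity in $g$. The paper (in the proof of Lemma~\ref{lem-renorm}) instead disposes of the irrational and parabolic cases directly via $\bar\tau=\rot$ and the known formula $\rot(\mathcal R g)\equiv -1/\rot G$; the same works here. Second, your determination of $j_1=nk+s$ from the zeroth-order approximation $G^{nl-r}(x)\approx x+nk+s-1/l$ is not quite rigorous: this value lies \emph{below} $x$, and it is only the first-order correction $(nl-r)(\rho-k/l)\sim 1/l$ that pushes it into $[x,G^l(x)-k]$. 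You can bypass this entirely by the paper's lattice argument: since the inclusion is a biholomorphism, $(\alpha',\beta')$ must be an oriented $\bbZ$-basis of $\bbZ\alpha+\bbZ\beta$, which already forces $\beta'\equiv -(r\beta+s\alpha)\pmod{\alpha'}$ without ever naming $j_1$.
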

Note that the above expression for $\bar \tau (\mathcal R^{k/l}g)$ equals $\tilde R (\bar \tau (G))$ where $\tilde R $ is defined as in Theorem \ref{th-main-1-kl}.
The proof is postponed till Sec. \ref{sec-renorm-proof}.

\subsection{Parabolic renormalization: through the eggbeater}
\label{sec-egg}
In this section, we will see that the renormalizations of the maps $f_{\eps}$ tend to the family of Lavaurs maps $L_c$. This is a well-known corollary of the Lavaurs theorem, however we provide a proof due to the lack of a suitable reference. We will prove this fact in  analytic charts close to $\Psi^+ (z+1)$, and the maps $L_c$ will turn into the family $\mathbf K+c$.

This induces a reparametrization of $(0,\eps)$, more or less by this $c$. We study this parametrization in Lemmas \ref{lem-d} and \ref{lem-d-monot} of this section. Now let us pass to more details.

In assumptions of Theorem \ref{th-main-1}, recall that $\Psi^{\pm}$ are Fatou coordinates of $F$ at zero, normalized by $\Psi^-(x)=0$ and $\Psi^+(x+1)=0$, where $-1<x<0$. For each small $\eps>0$, consider renormalizations $\mathcal Rf_{\eps}$, of our family $f_{\eps}$, associated with fundamental domains $[x,F_{\eps}(x)]$. Each map $\mathcal Rf_{\eps}$ acts on its own circle $[x,F_{\eps}(x)]/F_{\eps}$.



For each small $\eps>0$, put $d(\eps) := \Psi^+ F^n_{\eps}(x)$, where $n=n(\eps)$ is the smallest integer number  such that $F^n_{\eps}(x) \in [x+1, F_{\eps}(x)+1]$.

Consider analytic charts $\chi_{\eps}\colon [x,F_{\eps}(x)]/F_{\eps}\to \bbR/\bbZ$ that converge to the chart $(\Psi^+(z+1)\mod 1)$ on $[x,F(x)]/F$ as $\eps\to 0$; the convergence is uniform in a neighborhood of $[x,F_{\eps}(x)]$ in $\bbC$.  For example, we may take perturbed Fatou coordinates of $F_{\eps}$, see \cite[Proposition 3.2.2, coordinates $\Phi_{\pm,f}$]{Shi}.

\begin{lemma}
\label{lem-R-in-Fatou-coord}
Under assumptions of Theorem \ref{th-main-1}, suppose that  the sequence $\eps_k\to 0$, $\eps_k>0$, satisfies $d(\eps_k)\to c$. Then $\chi_{\eps} (\mathcal R f_{\eps})\chi_{\eps}^{-1}$ tends to ${\mathbf K}+c$ as $k\to \infty$, uniformly on some neighborhood of $\bbR/\bbZ$ in $\bbC/\bbZ$.
\end{lemma}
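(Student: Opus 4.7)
The plan is to identify $\chi_\eps \mathcal{R} f_\eps \chi_\eps^{-1}$, read in the chart $\chi_\eps\approx \Psi^+(\,\cdot+1)\bmod 1$, with an iterate of $F_\eps$ conjugated by Fatou coordinates, and then to conclude by Lavaurs' theorem.

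First I would lift the renormalization to $\bbR$. Since $\rot F_\eps\to 0^+$, the integer $n(\eps)$ from the definition of $d(\eps)$ is well-defined and tends to $\infty$. By monotonicity of $F_\eps$, the fundamental domain $[x,F_\eps(x)]$ splits at the unique point $a_\eps:=F_\eps^{-(n(\eps)-1)}(x+1)$, and the first-return map lifts to $y\mapsto F_\eps^{n(\eps)}(y)-1$ on $[x,a_\eps]$ and to $y\mapsto F_\eps^{n(\eps)-1}(y)-1$ on $[a_\eps, F_\eps(x)]$. Reading these two branches in the chart $\chi_\eps\approx \Psi^+(\,\cdot+1)\bmod 1$, and using the functional equation $\Psi^+\circ F=\Psi^++1$, one gets $\Psi^+(F_\eps^{n(\eps)}(y))\bmod 1$ and $\Psi^+(F_\eps^{n(\eps)-1}(y))\bmod 1$ respectively; but these agree modulo $1$ in the limit $\eps\to 0$, because $\Psi^+\circ F_\eps=\Psi^++1+o(1)$ uniformly on a complex neighborhood of $[x,F(x)]$. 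Writing $y=(\Psi^+)^{-1}(\tilde w)-1$ with $\tilde w$ any lift of $w\in\bbC/\bbZ$, the chart formula becomes
\[
 \chi_\eps\mathcal{R} f_\eps\chi_\eps^{-1}(w) \;=\; \Psi^+\bigl(F_\eps^{n(\eps)}((\Psi^+)^{-1}(\tilde w)-1)\bigr)\ \bmod 1 \;+\;o(1),
\]
valid on a complex neighborhood of $\bbR/\bbZ$.

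Next I would invoke Lavaurs. The hypothesis $d(\eps_k)\to c$ is exactly $\Psi^+(F_{\eps_k}^{n(\eps_k)}(x))\to c$, i.e.\ $F_{\eps_k}^{n(\eps_k)}(x)\to(\Psi^+)^{-1}(c)=L_c(x)$ (using $\Psi^-(x)=0$). The genericity assumption $\partial f/\partial\eps(0)>0$ supplies the multiplier condition required by Theorem~\ref{th-Lav-1}, so $F_{\eps_k}^{n(\eps_k)}\to L_c$ uniformly on compact subsets of $U^-$, and in particular on a complex neighborhood of $[x,F(x)]\subset(-1,0)\subset U^-$. Substituting into the chart formula and using $\Psi^+\circ L_c=T_c\circ \Psi^-$ gives
\[
\Psi^+\bigl(L_c((\Psi^+)^{-1}(\tilde w)-1)\bigr) \;=\; c+\Psi^-((\Psi^+)^{-1}(\tilde w)-1)\;=\; c+\mathbf{K}(w),
\]
hence $\chi_{\eps_k}\mathcal{R}f_{\eps_k}\chi_{\eps_k}^{-1}\to \mathbf{K}+c$ uniformly on a complex neighborhood of $\bbR/\bbZ$, as required.

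The main technical point is the two-branch character of the lifted renormalization: one has to verify that, in the Fatou chart and modulo $1$, the two branches converge to the same limit and glue into a single holomorphic map on the quotient circle, despite the gluing point $a_\eps$ itself drifting with $\eps$. This is handled by the identity $\Psi^+\circ F=\Psi^++1$: the $(n{-}1)$-branch is obtained from the $n$-branch by pre-composing with $F_\eps^{-1}$, which in the Fatou chart is a shift by $-1+o(1)$, so both branches coincide mod $1$ in the limit, and a single application of Theorem \ref{th-Lav-1} to the sequence $F_{\eps_k}^{n(\eps_k)}$ controls them simultaneously. Minor care is needed to propagate the convergence $\chi_\eps\to \Psi^+(\,\cdot+1)\bmod 1$ to a complex neighborhood of $\bbR/\bbZ$, but this is ensured by the choice of perturbed Fatou coordinates from \cite[Proposition 3.2.2]{Shi}.
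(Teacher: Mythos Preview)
Your proposal is correct and follows essentially the same route as the paper's proof: identify the two branches $F_{\eps_k}^{n(\eps_k)}-1$ and $F_{\eps_k}^{n(\eps_k)-1}-1$ of the lifted first-return map, apply Lavaurs' theorem (Theorem~\ref{th-Lav-1}) once to get $F_{\eps_k}^{n(\eps_k)}\to L_c$ on a neighborhood of $[x,F(x)]$, and then read both branches in the chart $\Psi^+(\,\cdot+1)\bmod 1$ to land on $\mathbf K+c$ via Remark~\ref{rem-Lav-chart}. The only cosmetic difference is that the paper phrases the second branch as converging to $L_{c-1}-1=F^{-1}L_c-1$ and then notes that in the quotient chart both $L_c-1$ and $L_{c-1}-1$ equal $\mathbf K+c$, whereas you argue the two branches differ by a shift of $-1+o(1)$ in the Fatou chart and hence coincide modulo~$1$ in the limit; these are the same observation.
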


\begin{proof}
The definition of $\mathcal R f_{\eps}$ suggests that we study the maps $F_{\eps_k}^{n(\eps_k)}-1$ and $F_{\eps_k}^{n(\eps_k)-1}-1$ on $[x, F_{\eps}(x)]$.

Since $d(\eps_k)\to c$, we have that
\begin{equation}
\label{eq-F-Psi}
 \lim_{k\to\infty}F_{\eps_k}^{n(\eps_k)}(x)=(\Psi^{+})^{-1}(c).
\end{equation}
Note that $L_c(x)=(\Psi^{+})^{-1} T_c \Psi^-(x)=(\Psi^{+})^{-1}(c)$ due to our normalization $\Psi^{-}(x)=0$, so the right-hand side of \eqref{eq-F-Psi} equals $L_c(x)$. Due to Lavaurs theorem (Theorem \ref{th-Lav-1}), in some neighborhood of $[x, F_{\eps}(x)]$  in $\bbC$, the maps $F^{n(\eps_k)}_{\eps_k}$ tend uniformly  to the Lavaurs map $L_c$. So the maps  $F^{n(\eps_k)}_{\eps_k}-1$ converge uniformly  to $L_c-1$  in some neighborhood of $[x, F_{\eps}(x)]$, and the maps $F^{n(\eps_k)-1}_{\eps_k}-1$ converge uniformly  to  $L_{c-1} -1 = F^{-1}L_c-1$.
Due to Remark \ref{rem-Lav-chart}, in the chart $\Psi^+(z+1)$ on $[x, F(x)]$, the map $L_c-1$  equals ${\mathbf K}+c$. So in the chart $(\Psi^+(z+1)\mod 1)$ on $[x, F(x)]/F$, the  maps $L_c-1$ and $L_{c-1}-1$ equal ${\mathbf K}+c$.
Therefore, in any analytic charts on $[x, F_{\eps}(x)]/F_{\eps}$ that tend to $(\Psi^+(z+1)\mod 1)$, we have the following uniform convergence in a neighborhood of $\bbR/\bbZ$:
\begin{align*}
&\chi_{\eps} (F_{\eps_k}^{n(\eps_k)}-1) \chi_{\eps}^{-1}\to {\mathbf K}+c\\
&\chi_{\eps} (F_{\eps_k}^{n(\eps_k)-1}-1) \chi_{\eps}^{-1}\to {\mathbf K}+c.
\end{align*}
 Since $\mathcal R f_{\eps}$ is induced by $F_{\eps_k}^{n(\eps_k)}-1$ on the one subsegment of $[x, F_{\eps}(x)]/F_{\eps}$ and by $F_{\eps_k}^{n(\eps_k)-1}-1$ on the other, the result follows.

\end{proof}

The function $d(\cdot)$ defined above is almost suitable as a parametrization of bubbles $B_{a_n, F_{\eps}}$  needed for Theorem \ref{th-main}. The following two lemmas study this function.

\begin{lemma}
\label{lem-d}
The function $d(\cdot)$ is monotonic and continuous on $I_{r, F_{\eps}}$ if  $r \neq 1/n$. On $I_{1/n, F_{\eps}}$ it is monotonic whenever continuous and has a jump.
The size of the jump tends to $1$ from above as $n\to\infty$.
In any case, the values of $d(\eps)$ for $\eps$ small belong to a small neighbohood of $[0,1]$.

\end{lemma}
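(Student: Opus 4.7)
The plan is to analyze the non-increasing step function $n(\eps)$ and show that the jumps of $d$ occur precisely at the values $\eps_n^*$ defined by $F_{\eps_n^*}^n(x) = x+1$. First I would note that because the family $f_\eps$ is monotonic in $\eps$, every iterate $F_\eps^n(x)$ is strictly increasing in $\eps$, so for each $n$ there is at most one $\eps_n^*$ with $F_{\eps_n^*}^n(x) = x+1$, and $n(\eps)$ drops by exactly one at each such point (a simultaneous drop $F_{\eps^*}^n(x) = F_{\eps^*}^m(x) = x+1$ with $m>n$ would make $x$ periodic with two different periods, a contradiction). The equation $F_{\eps_n^*}^n(x) = x+1$ makes $x$ a periodic point whose orbit translates by $+1$ in $n$ steps, forcing $\rot F_{\eps_n^*} = 1/n$, so $\eps_n^* \in I_{1/n, F_\eps}$. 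Conversely, no equation $F_\eps^n(x)=x+1$ can hold on $I_{a/b, F_\eps}$ when $a/b \neq 1/n$, so $n(\eps)$ is locally constant there.

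Next I would deduce the monotonicity-where-continuous claim: on any maximal interval of constancy of $n(\eps)$ (all of $I_{a/b, F_\eps}$ for $a/b \neq 1/n$, or each of the two sub-intervals of $I_{1/n, F_\eps}$ separated by $\eps_n^*$), the function $d(\eps) = \Psi^+(F_\eps^{n(\eps)}(x))$ is continuous and strictly increasing as the composition of the strictly monotonic $\eps \mapsto F_\eps^{n(\eps)}(x)$ with the strictly increasing real-analytic Fatou coordinate $\Psi^+$.

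The main computation is the jump size at $\eps_n^*$. By the ``smallest $n$'' convention $n(\eps_n^*) = n$, so the right-hand value is $d(\eps_n^*) = \Psi^+(x+1) = 0$. Just to the left of $\eps_n^*$ one has $F_\eps^n(x) < x+1$ but $F_\eps^{n+1}(x) = F_\eps(F_\eps^n(x)) \approx F_\eps(x+1) = F_\eps(x)+1 > x+1$ (using $F_\eps(x) > x$ for $\eps > 0$), hence $n(\eps) = n+1$ and the left limit of $d$ at $\eps_n^*$ is $\Psi^+(F_{\eps_n^*}(x)+1)$. The Abel equation $\Psi^+\circ F = T_1\circ \Psi^+$ together with the periodicity $F(z+1)=F(z)+1$ gives $\Psi^+(F(x)+1) = \Psi^+(F(x+1)) = \Psi^+(x+1)+1 = 1$; as $n\to\infty$ and $\eps_n^* \to 0$, the left limit tends to $1$, while the strict inequality $F_{\eps_n^*}(x) > F(x)$ for $\eps_n^*>0$ combined with monotonicity of $\Psi^+$ shows each jump exceeds $1$. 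For the range claim, on each constancy interval of $n(\eps)$ the function $d$ runs continuously and monotonically from $0$ to a value slightly above $1$, so the union of its images for small $\eps$ lies in an arbitrarily small neighborhood of $[0,1]$. The main obstacle I anticipate is the bookkeeping at $\eps_n^*$: correctly identifying the left and right limits requires carefully tracking which integer $n(\eps)$ takes on each side, and this depends on the monotonicity of $F_\eps$ in $\eps$ together with the inequality $F_\eps(x) > x$ for $\eps > 0$.
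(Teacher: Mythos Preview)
Your proposal is correct and follows essentially the same approach as the paper: identify the discontinuities of $n(\eps)$ as the points where $F_{\eps}^n(x)=x+1$, observe this forces $\rot F_\eps=1/n$, and compute the jump size via the Abel relation for $\Psi^+$. Your write-up is considerably more detailed than the paper's three-line argument (which simply states the jump size as $\Psi^+F_{\eps_n}(x)-\Psi^+(x)$ without tracking the $+1$ shifts); one small wording issue is that ``periodic with two different periods'' is not itself a contradiction---the actual contradiction is that $F_\eps^{m-n}(x)=x$ would force $\rot F_\eps=0$---but this point is inessential since the lemma only asserts the existence of a jump, not its uniqueness.
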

\begin{proof}
The last claim is clear because $\Psi^+([x+1,F_{\eps}(x)+1])$ is close to $\Psi^+([x+1, F(x)+1]) = [0,1]$.

Since $F^{n}_{\eps}$ is monotonic, the function $d(\cdot)$ is monotonic whenever continuous. It has jumps at the points $\eps_n$ with $F^{n}_{\eps_n}(x)=x+1$, i.e. where $\mathcal Rf_{\eps_n}(x)=x$. So the jump points belong to the set $\{\rot \mathcal Rf_{\eps}=0\}$, i.e. to $I_{1/n, F_{\eps}}$.

The size of the jump is $\Psi^+ F_{\eps_n}(x)-\Psi^+(x)$, which tends to $\Psi^+ F(x)-\Psi^+(x)=1$ from above as $\eps_n\to 0$.
\end{proof}

Recall that $a_n = -1/(p/q-n)$, as in Theorem \ref{th-main-1}. The following lemma shows that $d(\cdot)$ takes the segments  $I_{a_n, F_{\eps}}$ that parametrize bubbles $B_{a_n, F_{\eps}}$ (approximately) to the segments $I_{p/q, \mathbf K+\omega}$ that parametrize $B_{p/q, \mathbf K+\omega}$.
\begin{lemma}
\label{lem-d-monot}
 Put $I_n:=d(I_{a_n, F_{\eps}})\mod 1 \subset \bbR/\bbZ$. Then the set of limit points of $I_n$ as $n\to\infty$ coincides with $I_{p/q, {\mathbf K}+\omega}\mod 1$.
\end{lemma}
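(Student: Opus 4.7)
Plan: Denote $I^{\ast} := I_{p/q, \mathbf K + \omega} \bmod 1 \subset \bbR/\bbZ$; I establish the equality by proving both inclusions.

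The easier direction $\operatorname{acc}(\{I_n\}) \subseteq I^{\ast}$ uses the two preceding lemmas directly. For a limit point $c_0$, I extract $n_k \to \infty$ and $\eps_k \in I_{a_{n_k}, F_{\eps}}$ with $d(\eps_k) \to c_0 \pmod 1$; since the tongues $I_{a_{n_k}, F_\eps}$ accumulate at $\eps = 0$, we have $\eps_k \to 0$. By Lemma~\ref{lem-renorm}, $\rot(\mathcal R f_{\eps_k}) \equiv -1/a_{n_k} = p/q - n_k \equiv p/q \pmod 1$, and since rotation number is invariant under topological conjugacy, the same congruence holds for $\chi_{\eps_k} \mathcal R f_{\eps_k} \chi_{\eps_k}^{-1}$. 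Lemma~\ref{lem-R-in-Fatou-coord} then gives $\chi_{\eps_k} \mathcal R f_{\eps_k} \chi_{\eps_k}^{-1} \to \mathbf K + c_0$ in $C^0(\bbR/\bbZ)$, and continuity of the rotation number on circle homeomorphisms forces $\rot(\mathbf K + c_0) \equiv p/q$, i.e.\ $c_0 \in I^{\ast}$.

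For the harder inclusion $I^{\ast} \subseteq \operatorname{acc}(\{I_n\})$, I first upgrade Lemma~\ref{lem-R-in-Fatou-coord} to a uniform statement: a standard compactness/subsequence argument, exploiting that $d(\eps)$ lives in a small neighborhood of $[0,1]$ for $\eps$ small, yields $\|\chi_\eps \mathcal R f_\eps \chi_\eps^{-1} - (\mathbf K + d(\eps))\|_{C^0(\bbR/\bbZ)} \to 0$ as $\eps \to 0$, uniformly in $d(\eps)$. On each monotonic piece $P_n = [\eps_{n+1}^{\ast}, \eps_n^{\ast}]$ of $d$ between consecutive jumps, define $h_n(c) := \rot(\chi_{(d|_{P_n})^{-1}(c)} \mathcal R f \chi^{-1})$ for $c \in d(P_n) \approx [0,1]$; this is continuous and non-decreasing, winds once around $\bbR/\bbZ$, and converges uniformly to $h_{\infty}(c) := \rot(\mathbf K + c)$. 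The identification $h_n^{-1}(p/q) = I_{n+1}$ (up to the fixed index shift) then reduces the inclusion to a Hausdorff convergence of the level sets at $p/q$.

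Writing $I^{\ast} = [\alpha, \beta]$, I argue $\alpha_n := \min h_n^{-1}(p/q) \to \alpha$ and $\beta_n := \max h_n^{-1}(p/q) \to \beta$. For any $\eta > 0$, strict monotonicity of $h_{\infty}$ immediately outside $[\alpha, \beta]$ produces $\gamma > 0$ with $h_{\infty}(\alpha - \eta) < p/q - \gamma$ and $h_{\infty}(\beta + \eta) > p/q + \gamma$; once $\|h_n - h_{\infty}\|_{C^0} < \gamma$, monotonicity of $h_n$ forces $[\alpha_n, \beta_n] \subset [\alpha - \eta, \beta + \eta]$. The dual bound $\alpha_n \le \alpha + \eta$, $\beta_n \ge \beta - \eta$ follows from $h_{\infty} \equiv p/q$ on $[\alpha, \beta]$ together with monotonicity of $h_n$ and the fact that $h_n$ must actually attain $p/q$ somewhere. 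Hence $I_n \to I^{\ast}$ in Hausdorff distance, and every $c_0 \in I^{\ast}$ is a limit point.

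The main obstacle is the dual bound in the last step: a priori, the width of a rotation-number level set can behave discontinuously under $C^0$ perturbations of a circle diffeomorphism family. What saves us is that both $h_n$ and $h_{\infty}$ are \emph{monotonic} and that $\rot(\mathbf K + c)$ is a continuous, non-decreasing function of $c \in \bbR$ satisfying $\rot(\mathbf K + (c+1)) = \rot(\mathbf K + c) + 1$, which rules out the pathology where $h_{\infty}$ remains arbitrarily close to but unequal to $p/q$ on one side of $I^{\ast}$.
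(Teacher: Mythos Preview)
Your first inclusion $\operatorname{acc}(\{I_n\}) \subseteq I^{\ast}$ is correct and matches the paper's argument.

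The second inclusion, however, has a real gap at the ``dual bound''. From uniform convergence $h_n \to h_\infty$ of non-decreasing continuous functions together with $h_\infty \equiv p/q$ on $[\alpha,\beta]$, you \emph{cannot} conclude that $h_n^{-1}(p/q)$ approaches $[\alpha,\beta]$ in Hausdorff distance. A model counterexample: take $h_\infty \equiv 0$ on $[0,1]$ and $h_n(c) = (c-\tfrac12)/n$ on $[0,1]$ (extended with slope $1$ outside); then $h_n \to h_\infty$ uniformly, both are monotone, yet $h_n^{-1}(0) = \{\tfrac12\}$ for every $n$. Nothing in your listed ingredients (monotonicity of $h_n$, the periodicity relation $\rot(\mathbf K+c+1)=\rot(\mathbf K+c)+1$, or ``$h_n$ must attain $p/q$ somewhere'') rules this out; your last paragraph misdiagnoses the pathology as one about $h_\infty$, whereas it is really about $h_n$.

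The paper closes this gap with a different, much shorter idea: for any $c$ in the \emph{interior} of $I^{\ast}$ with $\mathbf K+c$ hyperbolic (all but finitely many points), choose $\eps_k\to 0$ with $d(\eps_k)\equiv c$; by Lemma~\ref{lem-R-in-Fatou-coord} the renormalizations converge to the hyperbolic map $\mathbf K+c$, and \emph{hyperbolicity is structurally stable}, so for large $k$ the nearby maps $\chi_{\eps_k}\mathcal R f_{\eps_k}\chi_{\eps_k}^{-1}$ also have rotation number $p/q$. Hence $\eps_k\in\bigcup_n I_{a_n,F_\eps}$ and $c$ is a limit point of $I_n$. Closedness of the limit set then gives all of $I^{\ast}$. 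This ``hyperbolic $\Rightarrow$ rotation number is locally constant under perturbation'' is exactly the missing mechanism; once you add it, there is no need for the uniform upgrade, the functions $h_n$, or the Hausdorff-convergence argument.
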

\begin{proof}
Let $d$ be a limit point of $I_n$, and let us prove that  $d\in I_{p/q, {\mathbf K}+\omega}$.
 Take $\eps_k\in I_{a_{n_k}, F_{\eps}}$ such that $d(\eps_k)\to d$; then due to Lemma \ref{lem-R-in-Fatou-coord}, $\chi_{\eps_k} (\mathcal R f_{\eps_k})\chi_{\eps_k}^{-1} \to {\mathbf K}+d$. Note that $\rot \chi_{\eps_k} (\mathcal R f_{\eps_k})\chi_{\eps_k}^{-1}= \rot \mathcal R f_{\eps}=-1/\rot F_\eps$, so for $\eps_k\in I_{a_{n_k}, f_\eps}$, we have $\rot \chi_{\eps_k} (\mathcal R f_{\eps_k})\chi_{\eps_k}^{-1} \equiv -1/a_{n_k} \equiv p/q \pmod 1$.  This implies  $\rot ({\mathbf K}+d)=p/q$, hence  $d\in I_{p/q, {\mathbf K}+\omega}$.

If $I_{p/q, {\mathbf K}+\omega}$ is just one point, the proof is finished. If $I_{p/q, {\mathbf K}+\omega}$  is a segment, we also need to prove that any point of $I_{p/q, {\mathbf K}+\omega}$ is a limit point of   $I_n$.
Take $c \in I_{p/q, {\mathbf K}+\omega}$ such that ${\mathbf K}+c$ is hyperbolic. This holds for all points of $I_{p/q, {\mathbf K}+\omega}$ except a finite set. Let $\eps_k\to 0 $ be such that $d(\eps_k)\equiv c \pmod 1$; the existence of such an infinite sequence $\eps_k$ follows from the definition of $d$. Then $\chi_{\eps_k} (\mathcal R f_{\eps_k})\chi_{\eps_k}^{-1}$ tends to the hyperbolic map ${\mathbf K}+c$ due to Lemma \ref{lem-R-in-Fatou-coord}, so  $\rot (\chi_{\eps_k} (\mathcal R f_{\eps_k})\chi_{\eps_k}^{-1}) = p/q$ for large $k$. This implies $\eps_k \in \bigcup I_{a_n, F_{\eps}}$ as explained above. Finally, $c$ belongs to the set of limit points of $I_n$.

Since the set of limit points is closed, $I_{p/q, {\mathbf K}+\omega}$ belongs to the set of limit points of $I_n$.

\end{proof}

\subsection{Parabolic renormalization at a rational point}
\label{sec-egg-kl}
Here we prove the analogues of the results from the previous section for the case $\rot f = k/l$.

Let $f$ be a circle diffeomorphism having one parabolic cycle of period $l$, $f^l(0)=0$. The analogue of Lemma \ref{lem-R-in-Fatou-coord} is the following. Recall that we normalize  Fatou coordinates of $f^l$ at $0$ by $\Psi^-(x)=0$, $\Psi^+(y)=0$ where $x \in (a,0), y := f^r(x)\in (0,b)$.
For each small $\eps>0$, put  $\tilde d(\eps):=\Psi^+ f^{nl}_{\eps}(x)$, where $n=n(\eps)$ is the smallest number such that $f^{nl-r}_{\eps}(y) = f^{nl}_{\eps}(x) \in [y, f_{\eps}^l(y)]$. Due to  Lemma \ref{lem-firstreturn}, this is the first point of the orbit of $y $ that belongs to $[y, f_{\eps}^l(y)]$. Let $\mathcal R^{k/l} f_{\eps}$ be $k/l$-renirmalizations associated with fundamental domains $[y, f^l_{\eps}(y)]$.

Note that $(\Psi^+ \mod 1)$ defines an analytic chart on $[y,f^l(y)]/f^l$.
Consider analytic charts $\tilde \chi_{\eps}$ on the circles $[y,f^l_{\eps}(y)]/f^l_{\eps}$ that converge to the chart $(\Psi^+ \mod 1)$ on $[y,f^l(y)]/f^l$, and the convergence is uniform on some neighborhood of $[y, f^l(y)]$ in $\bbC$. Again, we may take perturbed Fatou coordinates of $f_{\eps}^l$.

\begin{lemma}
\label{lem-R-in-Fatou-coord-kl}
Under assumptions of Theorem \ref{th-main-1-kl}, suppose that the sequence $\eps_k\to 0, \eps_k>0$, is such that $\tilde d(\eps_k)\to c$. Then $\tilde \chi_{\eps} (\mathcal R^{k/l} f_{\eps})\tilde \chi_{\eps}^{-1}$ tends to $\tilde {\mathbf K}+c$ as $k\to \infty$, uniformly on some neighborhood of $\bbR/\bbZ$ in $\bbC/\bbZ$.

\end{lemma}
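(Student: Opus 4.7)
The plan is to mimic the proof of Lemma \ref{lem-R-in-Fatou-coord} step by step, with $F_\eps$ replaced by the family $F_\eps^l - k$ (which by our genericity assumptions has a parabolic fixed point of multiplicity two at $0$ and undergoes a generic unfolding), and with Remark \ref{rem-Lav-chart-kl} used in place of Remark \ref{rem-Lav-chart}. The essential novelty compared to the $k/l=0$ case is that, by Lemma \ref{lem-renorm-kl}, the first-return map of $f_\eps$ to $[y, f_\eps^l(y)]$ has \emph{two} analytic branches rather than one; both must descend to the same self-map of the quotient circle $[y, f_\eps^l(y)]/f_\eps^l$, which is the key self-consistency check.

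First I would write $\mathcal R^{k/l} f_\eps$ explicitly using Lemma \ref{lem-renorm-kl} with fundamental domain $[y, f_\eps^l(y)]$: its first-return map has the form $\phi_\eps^{(1)} := f_\eps^{n(\eps)l - r} = f_\eps^{n(\eps)l}\circ f_\eps^{-r}$ on one subinterval and $\phi_\eps^{(2)} := f_\eps^{(n(\eps)-1)l - r} = f_\eps^{(n(\eps)-1)l}\circ f_\eps^{-r}$ on the other. Next I would apply Theorem \ref{th-Lav-1} to the analytic family $F_\eps^l - k$ at the parabolic point $0$: the definition $\tilde d(\eps_k) = \Psi^+ f_{\eps_k}^{n_k l}(x)$, together with the normalization $\Psi^-(x)=0$, gives
\[
f_{\eps_k}^{n_k l}(x)\longrightarrow (\Psi^+)^{-1}(c) = L_c(x)
\]
as soon as $\tilde d(\eps_k)\to c$, where $L_c$ is the Lavaurs map for $F^l-k$. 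Theorem \ref{th-Lav-1} then upgrades this pointwise limit to uniform convergence $f_{\eps_k}^{n_k l} \to L_c$ on compact subsets of $U^-$; analogously $f_{\eps_k}^{(n_k-1)l} \to L_{c-1} = (f^l)^{-1}\circ L_c$. Since $r$ is fixed and $f_{\eps_k}\to f$ uniformly on a complex neighborhood of the real axis, composing gives $\phi_{\eps_k}^{(1)} \to L_c\circ f^{-r}$ and $\phi_{\eps_k}^{(2)} \to (f^l)^{-1}\circ L_c\circ f^{-r}$ uniformly on neighborhoods of the respective subintervals of $[y, f^l(y)]$.

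The two limit maps differ by a single iterate of $f^l$, so they define a common self-map of the quotient $[y, f^l(y)]/f^l$. In the chart $(\Psi^+ \bmod 1)$ on this quotient, Remark \ref{rem-Lav-chart-kl} identifies that common map with $\tilde{\mathbf K} + c$. Because $\tilde\chi_\eps \to (\Psi^+ \bmod 1)$ uniformly in a complex neighborhood of $[y, f^l(y)]$, the conjugates $\tilde\chi_\eps (\mathcal R^{k/l} f_\eps)\tilde\chi_\eps^{-1}$ converge uniformly to $\tilde{\mathbf K}+c$ on some neighborhood of $\bbR/\bbZ$ in $\bbC/\bbZ$. I expect the main technical obstacle to be the bookkeeping around the mod-$f^l$ identification: one must check that the two branches of the first-return map really glue, in the limit, into the single analytic circle diffeomorphism $\tilde{\mathbf K}+c$, and that $n(\eps_k)\to\infty$ along the chosen sequence so that Theorem \ref{th-Lav-1} actually applies (this last point follows from the fact that $f$ has a parabolic orbit at $0$, which forces an unbounded number of iterates of $f_\eps^l$ to transit from a neighborhood of $x$ across the egg-beater into $[y, f^l_\eps(y)]$ as $\eps_k\to 0$).
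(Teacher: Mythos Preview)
Your proposal is correct and follows essentially the same route as the paper's own proof: identify the two branches $f_\eps^{nl-r}$ and $f_\eps^{(n-1)l-r}$ via Lemma \ref{lem-renorm-kl}, use $\tilde d(\eps_k)\to c$ together with $\Psi^-(x)=0$ to trigger Theorem \ref{th-Lav-1} for the family $F_\eps^l-k$, obtain $f_{\eps_k}^{nl}\to L_c$ and hence $f_{\eps_k}^{nl-r}\to L_c f^{-r}$, $f_{\eps_k}^{(n-1)l-r}\to L_{c-1} f^{-r}$, and finish with Remark \ref{rem-Lav-chart-kl} and the convergence of the charts $\tilde\chi_\eps$. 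Your observation that $L_{c-1}f^{-r}=(f^l)^{-1}L_c f^{-r}$ so that both branches descend to the same map on $[y,f^l(y)]/f^l$ is exactly the gluing the paper uses implicitly; one small inaccuracy is your remark that two branches are a novelty of the $k/l$ case---the $k/l=0$ proof (Lemma \ref{lem-R-in-Fatou-coord}) already handles the two branches $F_\eps^{n}-1$ and $F_\eps^{n-1}-1$ in the same way.
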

\begin{proof}
Since $d(\eps_k)\to c$, we have that $f_{\eps_k}^{ln(\eps_k)}(x)$ tends to $(\Psi^{+})^{-1}(c)$, which equals $L_c(x)$, where $L_c=(\Psi^{+})^{-1}T_c (\Psi^{-})$ is a Lavaurs map.   Due to Lavaurs theorem (see Theorem \ref{th-Lav-1}), in some neighborhood of $[x, f^l_{\eps}(x)]$  in $\bbC$, the maps $f^{ln(\eps_k)}_{\eps_k}$ tend uniformly  to the  Lavaurs map $L_c$.

So $f^{ln-r}_{\eps_k}$ on some neighborhood of $[y, f^l(y)]$ converge uniformly  to $L_cf^{-r}$. Similarly, the maps $f^{l(n-1)-r}_{\eps_k}$ converge uniformly  to $L_{c-1}f^{-r}$.
Due to Remark \ref{rem-Lav-chart-kl}, in the chart $(\Psi^+ \mod 1)$ on $[y, f^l(y)]/f^l$, the map $L_c f^{-r}$ equals $\tilde {\mathbf K} +c$.
So in any analytic charts $\tilde \chi_{\eps}$ on $[y, f_{\eps}^l(y)]/f_{\eps}^l$ that tend uniformly to $(\Psi^+(z) \mod 1)$, we have the uniform convergence $\tilde \chi_{\eps} f_{\eps_k}^{ln(\eps_k)-r} \tilde\chi_{\eps}^{-1}\to \tilde{\mathbf K}+c$ and $\tilde\chi_{\eps} f_{\eps_k}^{l(n(\eps_k)-1)-r} \tilde\chi_{\eps}^{-1}\to \tilde{\mathbf K}+c$. Due to Lemma \ref{lem-renorm-kl}, the map $\mathcal R^{k/l}(f_{\eps})$ is induced by $f_{\eps_k}^{ln(\eps_k)-r}$ on the one subsegment of $[y,f_{\eps}^l(y)]$ and by $f_{\eps_k}^{l(n(\eps_k)-1)-r}$ on the other, and the result follows.

\end{proof}

In the next two lemmas, we study the function $\tilde d(\cdot)$. We will use this function to reparametrize bubbles for Theorem \ref{th-main-kl}. Recall that $r, s$ are such that $kr+ls=1$. Recall that  $\tilde R(z)= (rz+s) / (-lz+k)$, and this is the mapping that acts on rotation numbers when we make $k/l$-renormalizations.
\begin{lemma}
\label{lem-d-kl}
The function $\tilde d(\cdot)$ is monotonic and continuous on $I_{r, F_{\eps}}$ if  $\tilde R(r) \neq 0 \pmod 1$. Otherwise,  it is monotonic whenever continuous on  $I_{r, F_{\eps}}$ and has a jump.
The size of the jump tends to $1$ from above as $n\to\infty$.
In any case, the values of $\tilde d(\eps)$ for $\eps$ small belong to a small neighborhood of $[0,1]$.
\end{lemma}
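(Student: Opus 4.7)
The plan is to repeat, \emph{mutatis mutandis}, the three-step proof of Lemma \ref{lem-d}, with $F$ replaced by the lift $F^l-k$ of the parabolic map $f^l$ and with $x$ replaced by $y=f^r(x)$, and to invoke the description of the first-return map of $\mathcal R^{k/l}f_{\eps}$ given by Lemma \ref{lem-renorm-kl}.

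First, I would verify the piecewise monotonicity. On any interval of $\eps$ on which the integer $n(\eps)$ is constant, the iterate $f^{n(\eps)l-r}_{\eps}(y)=f^{n(\eps)l}_{\eps}(x)$ is analytic and strictly increasing in $\eps$ because the family $f_{\eps}$ is monotonic; composing with $\Psi^+$, which is monotonic on the real axis (as the attracting Fatou coordinate of a real-preserving parabolic map), gives that $\tilde d$ is monotonic whenever continuous.

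Next, I would locate the discontinuities. A jump of $\tilde d$ occurs precisely when $n(\eps)$ drops by one, which happens at an $\eps_{*}$ where the first-return point $f^{n(\eps_{*})l-r}_{\eps_{*}}(y)$ hits the right endpoint $f^{l}_{\eps_{*}}(y)$ of the fundamental domain. Passing to the quotient $[y,f^{l}_{\eps_{*}}(y)]/f^{l}_{\eps_{*}}$, this is exactly the condition that $y$ be a fixed point of $\mathcal R^{k/l}f_{\eps_{*}}$, which forces $\rot(\mathcal R^{k/l}f_{\eps_{*}})\equiv 0\pmod 1$. Using the Möbius law for the rotation number under $k/l$-renormalization (the real counterpart of Lemma \ref{lem-renorm-1}, and the same transformation $\tilde R$), on $I_{r, F_{\eps}}$ this becomes $\tilde R(r)\equiv 0\pmod 1$; if this fails, $\tilde d$ is continuous on the whole interval. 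When $\tilde R(r)\equiv 0\pmod 1$, at least one jump appears on $I_{r,F_{\eps}}$ as $\eps$ sweeps across the interval, because $f^{n(\eps)l-r}_{\eps}(y)$ moves monotonically relative to the moving right endpoint.

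Third, I would compute the size of a jump. At a jump point $\eps_{*}$ the left and right limits of $\tilde d$ are $\Psi^{+}f^{l}_{\eps_{*}}(y)$ and $\Psi^{+}(y)=0$ respectively: the right limit comes from $f^{(n(\eps_{*})-1)l-r}_{\eps_{*}}(y)=f^{-l}_{\eps_{*}}f^{n(\eps_{*})l-r}_{\eps_{*}}(y)=f^{-l}_{\eps_{*}}f^{l}_{\eps_{*}}(y)=y$, the left from the very equation that defines a jump. As $\eps_{*}\to 0$ (i.e.\ the enumerating index $n\to\infty$) the point $f^{l}_{\eps_{*}}(y)$ decreases monotonically to $f^{l}(y)$; since $\Psi^{+}$ conjugates $F^{l}-k$ to $z\mapsto z+1$ we have $\Psi^{+}f^{l}(y)=\Psi^{+}(y)+1=1$, and monotonicity of $\Psi^{+}$ on $\bbR$ together with $f^{l}_{\eps_{*}}(y)>f^{l}(y)$ gives that $\Psi^{+}f^{l}_{\eps_{*}}(y)>1$, so the jump size approaches $1$ from above. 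Finally, for small $\eps$, $\tilde d(\eps)\in\Psi^{+}[y,f^{l}_{\eps}(y)]$, and this image converges uniformly to $\Psi^{+}[y,f^{l}(y)]=[0,1]$, proving the last assertion.

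The main obstacle, as in Lemma \ref{lem-d}, is the bookkeeping to translate between the parabolic lift $F^{l}-k$ (on which $\Psi^{+}$ is defined) and the circle iterates of $f^{l}_{\eps}$; this is harmless since $F^{l}(b)=b+k$ and both $y,f^{l}_{\eps}(y)$ lie in $(0,b)$ where $F^{l}-k$ is a genuine parabolic real germ, so the identifications $\Psi^{+}(f^{l}_{\eps}(y))=\Psi^{+}(F^{l}_{\eps}(y)-k)$ are unambiguous.
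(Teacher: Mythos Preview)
Your proposal is correct and follows essentially the same approach as the paper, which merely identifies the jump condition $f^{nl}_\eps(x)=y$ (equivalently, $\mathcal R^{k/l}f_\eps$ has a fixed point, i.e.\ $\tilde R(\rot f_\eps)\equiv 0$) and then declares that the rest repeats the proof of Lemma~\ref{lem-d}; you have simply spelled out those repeated details. One small slip: $\Psi^{+}$ is the \emph{repelling} Fatou coordinate, not the attracting one, though this does not affect the monotonicity argument you use.
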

\begin{proof}
 Note that the jump of $\tilde d$ occurs when $f^{nl}_{\eps}(x)= f^r_{\eps}(x)$, i.e. $\mathcal R^{k/l} f_{\eps}$ has a fixed point at $x$. This happens when $\rot \mathcal R^{k/l}f_\eps=0$, i.e. $\tilde R(\rot f_{\eps})=0$. The rest of the proof repeats the proof of Lemma \ref{lem-d}.
\end{proof}

 Recall that $p/q$ is a rational number, and the sequence $\{a_n\}$ is such that  $\tilde R(a_n)= p/q-n$.
\begin{lemma}
\label{lem-d-monot-kl}
 Put $\tilde I_n:=\tilde d(I_{\tilde a_n, F_{\eps}})\mod 1 \subset \bbR/\bbZ$. Then the set of limit points of $\tilde I_n$ as $n\to\infty$ coincides with $I_{p/q, \tilde {\mathbf K}+\omega}\mod 1$.
\end{lemma}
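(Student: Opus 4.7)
The proof is a direct transcription of the proof of Lemma \ref{lem-d-monot} to the $k/l$-renormalization setting: replace Lemma \ref{lem-R-in-Fatou-coord} by its $k/l$-analogue Lemma \ref{lem-R-in-Fatou-coord-kl}, replace Lemma \ref{lem-renorm} by Lemma \ref{lem-renorm-1}, and replace $\mathbf K$ by $\tilde{\mathbf K}$. We prove the two inclusions between the set of limit points of $\tilde I_n$ and $I_{p/q, \tilde{\mathbf K}+\omega} \bmod 1$ separately.

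For the inclusion ``limit points of $\tilde I_n$ lie in $I_{p/q, \tilde{\mathbf K}+\omega} \bmod 1$'', suppose $d$ is a limit point and pick $\eps_k \in I_{\tilde a_{n_k}, F_\eps}$ with $n_k\to\infty$ and $\tilde d(\eps_k)\to d$ in $\bbR/\bbZ$. By Lemma \ref{lem-R-in-Fatou-coord-kl}, the conjugated renormalizations $\tilde\chi_{\eps_k}(\mathcal R^{k/l} f_{\eps_k})\tilde\chi_{\eps_k}^{-1}$ converge uniformly to $\tilde{\mathbf K}+d$. Since rotation number is conjugacy-invariant and (by the real analogue of Lemma \ref{lem-renorm-1}) transforms as $\rot(\mathcal R^{k/l} f_{\eps_k}) \equiv \tilde R(\rot F_{\eps_k}) = \tilde R(\tilde a_{n_k}) = p/q - n_k \equiv p/q \pmod 1$, continuity of $\rot$ under uniform convergence gives $\rot(\tilde{\mathbf K}+d)=p/q$, i.e.\ $d \in I_{p/q, \tilde{\mathbf K}+\omega} \bmod 1$.

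For the reverse inclusion, since the set of limit points is closed, it is enough to realize each $c \in I_{p/q, \tilde{\mathbf K}+\omega}$ at which $\tilde{\mathbf K}+c$ is hyperbolic (the complement in $I_{p/q, \tilde{\mathbf K}+\omega}$ consists of finitely many parabolic points). By Lemma \ref{lem-d-kl}, $\tilde d$ is monotonic on its intervals of continuity and, for small $\eps$, its jumps have size slightly greater than $1$ while its overall values stay near $[0,1]$; hence for every $c \in \bbR/\bbZ$ there exist arbitrarily small $\eps$ with $\tilde d(\eps) \equiv c \pmod 1$. Pick such a sequence $\eps_k \to 0$. Then Lemma \ref{lem-R-in-Fatou-coord-kl} gives $\tilde\chi_{\eps_k}(\mathcal R^{k/l} f_{\eps_k})\tilde\chi_{\eps_k}^{-1} \to \tilde{\mathbf K}+c$, which is hyperbolic with rotation number $p/q$, so the rotation number is locally constant and equals $p/q$ for all large $k$. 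By Lemma \ref{lem-renorm-1} applied to real rotation numbers, $\tilde R(\rot F_{\eps_k}) \equiv p/q \pmod 1$, so $\rot F_{\eps_k} = \tilde a_{m_k}$ for some $m_k$ and $\eps_k \in I_{\tilde a_{m_k}, F_\eps}$. Combined with $\eps_k \to 0$ and $\tilde a_n \to k/l$, this forces $m_k \to \infty$, so $c$ is a limit point of $\tilde I_n$.

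\textbf{Expected main obstacle.} The one step that requires genuine input from the preceding section (rather than a pure transcription from Lemma \ref{lem-d-monot}) is the ``surjectivity'' of $\tilde d \bmod 1$ used in the reverse inclusion: one must know that for small $\eps$ every residue class in $\bbR/\bbZ$ is hit. This is exactly guaranteed by the ``jump size $\to 1^+$'' assertion of Lemma \ref{lem-d-kl}: each monotonic branch is followed by an overshooting jump, so successive branches cover $\bbR/\bbZ$ with vanishing overlap. All remaining ingredients are already packaged into Lemmas \ref{lem-R-in-Fatou-coord-kl}, \ref{lem-renorm-1}, and \ref{lem-d-kl}, so no new estimates are needed.
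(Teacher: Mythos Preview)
Your proposal is correct and follows essentially the same approach as the paper, which simply states that the proof is completely analogous to that of Lemma~\ref{lem-d-monot}; your two-inclusion argument with the substitutions $\mathcal R \to \mathcal R^{k/l}$, $\mathbf K \to \tilde{\mathbf K}$, $d \to \tilde d$, and Lemmas~\ref{lem-R-in-Fatou-coord}, \ref{lem-d} $\to$ Lemmas~\ref{lem-R-in-Fatou-coord-kl}, \ref{lem-d-kl} is exactly this transcription. Your discussion of the surjectivity of $\tilde d \bmod 1$ via Lemma~\ref{lem-d-kl} is slightly more explicit than the paper's one-line ``follows from the definition of $d$,'' but amounts to the same observation.
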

The proof is completely analogous to the proof of Lemma \ref{lem-d-monot}.

\subsection{Continuity of complex rotation numbers}
The following lemma shows that $\bar \tau$ is continuous in the metric of uniform convergence at a circle diffeomorphism with at most one parabolic cycle.
\begin{lemma}
\label{lem-tau-contin}
 Let $g_{k}$ be a sequence of analytic circle diffeomorphisms that converges to an analytic circle diffeomorphism $g$, uniformly  on some neighborhood of $\bbR/\bbZ$ in $\bbC/\bbZ$. Suppose that $g$  has at most one parabolic cycle. Then $ \bar \tau (g_{k}) \to  \bar \tau (g)$.
\end{lemma}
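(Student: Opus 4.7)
The plan is to first establish interior convergence of the complex rotation numbers on $\bbH/\bbZ$ and then analyze the boundary value at $\omega=0$ case by case, depending on the type of $g$. For the interior, the Arnold quotient torus $E(G_k+\omega)$ depends continuously on $(G_k,\omega)$ for $\omega\in\bbH$, so its modulus does too, giving pointwise $\tau_{g_k}(\omega)\to\tau_g(\omega)$ on $\bbH/\bbZ$. Since each $\tau_{g_k}$ maps $\bbH/\bbZ$ into itself, Montel's theorem yields a normal family and promotes pointwise convergence to uniform convergence on compact subsets of $\bbH/\bbZ$. It remains to pass to $\omega=0$.

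If $\rot g$ is irrational, then $\bar\tau(g)=\rot g$. Since the rotation number is continuous in the uniform topology, $\rot g_k\to\rot g$. Non-hyperbolic $g_k$ satisfy $\bar\tau(g_k)=\rot g_k$ by Theorem~\ref{th-XB_NG-1}(1)--(2), while hyperbolic $g_k$ with $\rot g_k=p_k/q_k$ must have $q_k\to\infty$ (rationals approaching an irrational blow up denominators), and Theorem~\ref{th-XB_NG-1}(3) gives $|\bar\tau(g_k)-p_k/q_k|\leq D_{g_k}/(2\pi q_k^2)\to 0$. If $g$ is hyperbolic, hyperbolicity is open in the uniform topology and the rotation number is locally constant on the interval of hyperbolicity, so $g_k$ is hyperbolic with the same rotation number for large $k$, and Remark~\ref{rem-analyticity} supplies the analytic, hence continuous, dependence of $\bar\tau$ there.

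The essential case is $\rot g=p/q$ with $g$ carrying exactly one parabolic cycle, where $\bar\tau(g)=p/q$ by Theorem~\ref{th-XB_NG-1}(2). Passing to subsequences, I may assume all $g_k$ are hyperbolic (non-hyperbolic $g_k$ satisfy $\bar\tau(g_k)=\rot g_k\to p/q$) and that $\rot g_k=p/q$ for all $k$ (the contrary is again handled by $q_k\to\infty$). For this residual sub-case I would analyze the perturbed Fatou coordinates of $g_k$ at the parabolic cycle of $g$, which converge to the Fatou coordinates of $g$ (see \cite[Proposition 2.5.2]{Shi}). These convergent coordinates give uniform control of the Buff--Goncharuk construction of the limit torus underlying $\bar\tau(g_k)$: the cylinder attached to the degenerating parabolic basin has modulus tending to $\infty$, and after the $\bbZ$-quotient on $\overline{\bbH}/\bbZ$ this forces $\bar\tau(g_k)\to p/q$.

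The main obstacle is precisely this last sub-case when $g$ also has additional hyperbolic periodic orbits of rotation number $p/q$: under perturbation, the parabolic orbit can ``split'' into a complex-conjugate pair of periodic orbits of $g_k$, so that the real periodic orbits of $g_k$ cluster only near the hyperbolic cycles of $g$ and have multipliers bounded away from $1$. The real-orbit bound of Lemma~\ref{lem-disc-radius} is then insufficient on its own, and one must use that the complex periodic orbits emerging from the parabolic still have multipliers tending to $1$ and continue to contribute to the complex limit torus defining $\bar\tau(g_k)$. Carrying this out via perturbed Fatou coordinates and the Lavaurs picture (Theorem~\ref{th-Lav-1}) is the technical heart of the proof, and it is exactly where the single-parabolic hypothesis is needed: two parabolic cycles would produce two independent Lavaurs phases and the argument would collapse, reflecting the genuine discontinuity noted in Remark~\ref{rem-discont}.
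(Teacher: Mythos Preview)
Your treatment of the irrational and purely hyperbolic cases is correct and essentially matches the paper (the paper proves the hyperbolic case by a direct quasiconformal interpolation, Lemma~\ref{lem-glue-contin}, rather than quoting Remark~\ref{rem-analyticity}, but either works). The opening paragraph on interior convergence via Montel is harmless but unused: locally uniform convergence of $\tau_{g_k}$ on $\bbH/\bbZ$ gives no information about the boundary value at $\omega=0$, and indeed you never invoke it afterwards.

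The substantive divergence is in the parabolic case, subcase~(4): $\rot g_k=\rot g=p/q$, all $g_k$ hyperbolic, and the parabolic cycle of $g$ disappears into a complex-conjugate pair so that the real periodic orbits of $g_k$ stay away from it with multipliers bounded from $1$. You propose to handle this via perturbed Fatou coordinates and the Lavaurs picture, arguing that the complex periodic orbits ``continue to contribute to the complex limit torus.'' This is where the proposal has a genuine gap. The Buff construction of $\mathcal E(g_k)$ (Section~\ref{sec-Buff}) uses only the \emph{real} periodic orbits of $g_k$ and a curve $\gamma$ near $\bbR/\bbZ$; when the parabolic escapes to the complex, there is simply no piece of the construction attached to it, so there is no ``cylinder of large modulus'' to point to, and no obvious way for Theorem~\ref{th-Lav-1} to enter.

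The paper's mechanism is different and concrete. It invokes the quasiconformal estimate of \cite[Lemma~12]{XB_NG} (Lemma~\ref{lemma-qctwist} here),
\[
\dist_{\bbH/\bbZ}\!\left(q\,\bar\tau(g_k),\,-\frac{1}{\sigma(g_k)}\right)\le 5D_{g_k},
\]
where $\sigma(g_k)$ is an explicit sum over the real periodic points of $g_k$ built from their linearizing charts. On an arc $[\alpha_j,\alpha_{j+1}]$ that contains the former parabolic point, orbits of $g_k^q$ pass through the ``gate'' arbitrarily slowly, so $|\log(\phi_j^k)^{-1}(x_j)|$ and $|\log(\phi_{j+1}^k)^{-1}(x_j)|$ blow up; the corresponding summand $\Re(\tilde s_{j+1}^k-\tilde r_j^k)$ tends to $+\infty$ or $-\infty$ according to whether the parabolic attracted from the right or the left. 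With a \emph{single} parabolic cycle all unbounded summands have the same sign, hence $\sigma(g_k)\to\pm\infty$, $-1/\sigma(g_k)\to 0$, and together with the disc bound of Theorem~\ref{th-XB_NG-1}(\ref{it-hyp}) this forces $\bar\tau(g_k)\to p/q$. This is exactly where the single-parabolic hypothesis is consumed, matching your diagnosis, but the tool is the $\sigma$-computation rather than Lavaurs theory.
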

The proof of this lemma constitutes Sec. \ref{sec-contin}.
\subsection{Proof of Theorems \ref{th-main-1} and \ref{th-main} modulo auxiliary lemmas}
\label{sec-proofs}
The most essential components of the proofs of Theorems \ref{th-main-1} and \ref{th-main} are: the behaviour of complex rotation numbers under renormalizations (Lemma \ref{lem-renorm}), the convergence of renormalizations to $\mathbf K+c$ (Lemma \ref{lem-R-in-Fatou-coord}), and the continuity of complex rotation numbers (Lemma \ref{lem-tau-contin}). Their proofs are contained in Sections \ref{sec-renorm-proof}, \ref{sec-egg}, and \ref{sec-contin} respectively. The proofs in this section are straightforward and show how to reduce the theorems to the lemmas mentioned above.


The following lemma
is a common part of the proofs of Theorems \ref{th-main-1} and \ref{th-main}. We recall that $R(z)=-1/z$ and $d(\cdot)$ is defined in Sec. \ref{sec-egg}.

\begin{lemma}
\label{lem-summ}
Suppose that $c\in I_{p/q, \mathbf K+\omega}$ and $\mathbf K+c$ has at most one parabolic fixed point. Let $\eps_k\to 0$ be such that $(d(\eps_k)\mod 1) \to c$.

Then $R(\bar \tau (F_{\eps_k}))\to \bar \tau (\mathbf K+c)$ in $\overline \bbH/\bbZ$.
\end{lemma}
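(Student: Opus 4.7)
The plan is to chain together the three structural lemmas already isolated in the paper: the renormalization formula for complex rotation numbers (Lemma \ref{lem-renorm}), the convergence of Fatou-conjugated renormalizations to $\mathbf K + c$ (Lemma \ref{lem-R-in-Fatou-coord}), and the continuity of $\bar\tau$ at diffeomorphisms with at most one parabolic cycle (Lemma \ref{lem-tau-contin}). Each of these addresses one of the three moving parts of the statement, so the lemma should follow once we verify the hypotheses line up.

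First I would observe that the genericity assumptions at the start of Sec.~\ref{sec-egg} guarantee that for $\eps_k > 0$ sufficiently small, the map $f_{\eps_k}$ has no fixed point on the circle (the unique parabolic fixed point of $F$ disappears since $\partial_\eps F(0) > 0$), and its lift satisfies $0 < \rot F_{\eps_k} < 1$. Thus $\mathcal R f_{\eps_k}$ is a well-defined analytic circle diffeomorphism on the quotient circle $[x, F_{\eps_k}(x)]/F_{\eps_k}$, and Lemma \ref{lem-renorm} applies to give
\begin{equation*}
R(\bar\tau(F_{\eps_k})) \equiv \bar\tau(\mathcal R f_{\eps_k}) \pmod 1.
\end{equation*}

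Next I would transport $\mathcal R f_{\eps_k}$ to a common circle $\bbR/\bbZ$ via the analytic charts $\chi_{\eps_k}$ introduced before Lemma \ref{lem-R-in-Fatou-coord}. By Lemma \ref{lem-chart}, this conjugation leaves the complex rotation number unchanged:
\begin{equation*}
\bar\tau(\mathcal R f_{\eps_k}) = \bar\tau\bigl(\chi_{\eps_k}(\mathcal R f_{\eps_k})\chi_{\eps_k}^{-1}\bigr).
\end{equation*}
The hypothesis $d(\eps_k) \to c \pmod 1$ together with Lemma \ref{lem-R-in-Fatou-coord} (applied after replacing $d(\eps_k)$ by a representative of its class converging to $c$) then shows that the conjugated renormalizations converge uniformly on some fixed complex neighborhood of $\bbR/\bbZ$ to $\mathbf K + c$. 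Because by hypothesis $\mathbf K + c$ has at most one parabolic cycle, Lemma \ref{lem-tau-contin} yields
\begin{equation*}
\bar\tau\bigl(\chi_{\eps_k}(\mathcal R f_{\eps_k})\chi_{\eps_k}^{-1}\bigr) \longrightarrow \bar\tau(\mathbf K + c) \quad \text{in } \overline{\bbH}/\bbZ.
\end{equation*}

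Combining the three displays gives $R(\bar\tau(F_{\eps_k})) \to \bar\tau(\mathbf K + c)$ in $\overline{\bbH}/\bbZ$, which is the claim. The main obstacle is bookkeeping rather than mathematical: one has to keep track of the $\mathrm{mod}\,1$ ambiguity throughout (the quantities $d(\eps_k)$, $\bar\tau(\mathcal R f_{\eps_k})$, and $R(\bar\tau(F_{\eps_k}))$ are all naturally defined only modulo~$1$), and to verify that the convergence $d(\eps_k) \to c \bmod 1$ is strong enough to invoke Lemma \ref{lem-R-in-Fatou-coord}, which was stated for a lifted sequence $d(\eps_k) \to c$. This is resolved by picking, for each $k$, the integer translate of $d(\eps_k)$ lying in a fundamental domain containing $c$; the values of $\mathbf K + c$ and of $\bar\tau$ on $\bbH/\bbZ$ are insensitive to this choice. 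The remaining step — applying the continuity lemma — is where the single-parabolic-cycle assumption on $\mathbf K + c$ is genuinely needed; without it, $\bar\tau$ can jump in the limit, as noted in Remark~\ref{rem-genericity}.
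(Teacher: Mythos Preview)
Your proposal is correct and follows essentially the same route as the paper: chain Lemma~\ref{lem-renorm} (renormalization formula for $\bar\tau$), Lemma~\ref{lem-chart} (chart-invariance), Lemma~\ref{lem-R-in-Fatou-coord} (convergence of conjugated renormalizations to $\mathbf K+c$), and Lemma~\ref{lem-tau-contin} (continuity of $\bar\tau$). Your additional remarks on well-definedness of $\mathcal R f_{\eps_k}$ and the $\bmod\,1$ bookkeeping are sound and do not alter the structure of the argument.
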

\begin{proof}

When we make  renormalizations, we apply the map  $R(z)=-1/z$ to complex rotation numbers (Lemma \ref{lem-renorm}):
\begin{equation}
\label{eq-R}
 R( \bar \tau(F_{\eps})) \equiv \bar \tau(\mathcal R f_{\eps}) \pmod 1.
\end{equation}
The renormalizations $\mathcal Rf_{\eps}$  in suitable charts are close to the family $\mathbf K+\omega$ due to Lemma \ref{lem-R-in-Fatou-coord}. Formally, take the charts $\chi_{\eps}$ as in Lemma \ref{lem-R-in-Fatou-coord}. Since the complex rotation number does not depend on the charts (Lemma \ref{lem-chart}), we have
\begin{equation}
\label{eq-R-2}
 \bar \tau(\mathcal R f_{\eps}) =\bar \tau(\chi_{\eps}(\mathcal R f_{\eps})\chi_{\eps}^{-1}).
\end{equation}
Due to Lemma \ref{lem-R-in-Fatou-coord}, the  maps $\chi_{\eps_n} (\mathcal R f_{\eps_n})\chi_{\eps_n}^{-1}$ tend to  $\mathbf K+c$ uniformly on some neighborhood of $\bbR/\bbZ$ in $\bbC/\bbZ$. Using \eqref{eq-R}, \eqref{eq-R-2}, and the continuity of $\bar \tau$ at $\mathbf K+c$  (Lemma \ref{lem-tau-contin}), we get $R( \bar \tau(F_{\eps_n})) = \bar \tau(\chi_{\eps_n} (\mathcal R f_{\eps_n})\chi_{\eps_n}^{-1})\to \bar \tau(\mathbf K+c)$.
\end{proof}

\begin{proof}[Proof of Theorem \ref{th-main-1}]
 Recall that we consider $a_n$-bubbles of $F_{\eps}$, $B_{a_n, F_{\eps}}$, and the map $R(z)=-1/z$
takes the sequence $\{a_n\}$ to the sequence $\{p/q-n\}$.

First, prove that the set of limit points of $R(B_{a_k, F_{\eps}})$ contains $B_{p/q, \mathbf K +\omega}\smallsetminus \bbR$.

Consider any $c\in I_{p/q, \mathbf K+\omega}$ such that  ${\mathbf K}+c$ is hyperbolic; this corresponds to an arbitrary point of $B_{p/q, \mathbf K +\omega}\smallsetminus \bbR$. Recall that the limit points of the sequence of segments $d(I_{a_n, F_{\eps}})$ form the segment $I_{p/q, \mathbf K+\omega}$ (Lemma \ref{lem-d-monot}), so we may fix $\eps_n\to 0, \eps_n\in \bigcup I_{a_k, F_{\eps}}$, such that $d(\eps_n)\to c$. Due to Lemma \ref{lem-summ}, we have that $R(\bar \tau(f_{\eps_n}))\to \bar \tau (\mathbf K+c)$.

So the  set of limit points of  $R(B_{a_k, F_{\eps}})$ contains $ B_{p/q, \mathbf K +\omega}\smallsetminus \bbR$.

Since the set of limit points is closed, it contains $ B_{p/q,{\mathbf K}+\omega}$ as well.

\end{proof}


\begin{proof}[Proof of Theorem \ref{th-main}.]
 In this theorem, we must present suitable parametrizations of bubbles $B_{a_n, F_{\eps}}$.

 First, consider the case $p/q\neq 0$, i.e. $a_n$ is not the sequence $1/n$; in this case, the suitable parametrization will be the function $d(\cdot)$ introduced in Sec.~\ref{sec-egg}. Due to Lemma \ref{lem-d},  $d(\cdot)$ is a continuous monotonic function on $I_{a_n, F_{\eps}}$.

 Suppose that the curves $R(B_{a_n, F_{\eps}})$ parametrized by $d(\cdot)$ do not tend uniformly to $B_{p/q, \mathbf K+c}$ parametrized by $c$. So for some $\delta>0$, there exists a sequence $\eps_k \to 0, \eps_k \in \bigcup I_{a_{n}, F_{\eps}}$, such that
 \begin{equation}
 \label{eq-bubble-dist}
  \dist (R(\bar\tau( f_{\eps_k})), \bar \tau (\mathbf K + d(\eps_k)))>\delta.
 \end{equation}
Extracting subsequences, we may assume that $d(\eps_k)$ converges to some $c$. Recall that the limit points of the segments $d(I_{a_n, F_{\eps}})$ form the segment $I_{p/q, \mathbf K+\omega}$ (Lemma \ref{lem-d-monot}), hence $c\in I_{p/q, \mathbf K+\omega}.$

Now Lemma \ref{lem-summ} implies that $R(\bar\tau( f_{\eps_k})) \to \bar \tau(\mathbf K +c)$. Since $\bar \tau(\mathbf K +c)$ is continuous with respect to $c$, the distance in \eqref{eq-bubble-dist} must tend to zero, and we get a contradiction.

The contradiction shows that the curves $R(B_{a_k, F_{\eps}})$, parametrized by $d(\cdot)$,  tend uniformly to $B_{p/q, \mathbf K+\omega}$, q.e.d.

If $p/q=0$, i.e. $a_n=1/n$, the function $d(\cdot)$ is not continuous on $I_{a_n, F_{\eps}}$. It has a jump  on each segment $I_{1/n, F_{\eps}}$. However  the size of the jump of $(d(\cdot) \mod 1)$ tends to zero as $n\to \infty$, see  Lemma \ref{lem-d}. So there exists a continuous monotonic reparametrization $D(\cdot) \colon I_{1/n, F_{\eps}}\to \bbR$ of the curves $R(B_{a_n, F_{\eps}})$ such that $(D(\eps) - d(\eps))\mod 1 \to 0$ as $\eps \to 0$. Then for any sequence $\eps_k\to 0$, we have $d(\eps_k)\to c$ on $\bbR/\bbZ$ iff $D(\eps_k)\to c$. So Lemma \ref{lem-summ} holds true for $D(\cdot)$. The same arguments as above imply the statement of Theorem \ref{th-main}.
\end{proof}

Exactly the same arguments lead to the following statement. It seems to be the strongest possible statement in assumptions of Theorem \ref{th-main-1} only.

\begin{theorem}
\label{th-main-1.5}
 In assumptions of Theorem \ref{th-main-1}, let $Z$ be a finite set of values  $c\in I_{p/q, \mathbf K+\omega}$ such that $\mathbf K+c$ has at least two parabolic fixed points. Let $U$ be an arbitrarily small neighborhood of $Z$. Then the  curves $\{R(\bar \tau(f_{\eps}))\mid \eps \in I_{a_n, F_{\omega}}\smallsetminus d^{-1}(U)\}$, parametrized by $d(\cdot)$ or $D(\cdot)$ as in Theorem \ref{th-main}, tend uniformly to $\{\bar \tau(\textbf K+c)\mid c \in I_{p/q, \mathbf K+\omega}\smallsetminus U\}$.
\end{theorem}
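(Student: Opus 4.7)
The plan is to follow the contradiction argument used in the proof of Theorem~\ref{th-main} essentially verbatim, with one small adjustment: the at-most-one-parabolic-cycle hypothesis was invoked there only through Lemma~\ref{lem-summ} (which in turn relies on the continuity Lemma~\ref{lem-tau-contin}), so cutting out an \emph{open} neighborhood $U$ of the finite exceptional set $Z$ is designed precisely to keep every subsequential limit of $d(\eps_k)$ inside the ``good'' parameter set, where Lemma~\ref{lem-summ} remains applicable.

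First I would fix a parametrization of each curve $R(B_{a_n, F_{\eps}})$. For $p/q\neq 0$ the function $d$ is continuous and monotonic on $I_{a_n, F_\eps}$ by Lemma~\ref{lem-d}; for $p/q=0$ I would use the continuous monotonic reparametrization $D$ introduced in the proof of Theorem~\ref{th-main}, which satisfies $(D-d)\mod 1\to 0$ as $\eps\to 0$. In either case $d^{-1}(U)$ is an open subset of $I_{a_n, F_\eps}$, and the restricted curves in the statement are parametrized continuously by $d$ (or $D$).

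Next I would argue by contradiction. Suppose uniform convergence fails; then there exist $\delta>0$ and a sequence $\eps_k\to 0$ with $\eps_k\in I_{a_{n_k}, F_\eps}\smallsetminus d^{-1}(U)$ such that
\[
\dist\bigl(R(\bar\tau(f_{\eps_k})),\ \bar\tau(\mathbf{K}+d(\eps_k))\bigr)>\delta.
\]
Passing to a subsequence I may assume $d(\eps_k)\mod 1\to c$. By Lemma~\ref{lem-d-monot} the limit $c$ lies in $I_{p/q,\mathbf{K}+\omega}$; and since each $d(\eps_k)$ lies outside the open set $U$, so does $c$. In particular $c\notin Z$, hence $\mathbf{K}+c$ has at most one parabolic cycle. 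Lemma~\ref{lem-summ} then gives $R(\bar\tau(f_{\eps_k}))\to\bar\tau(\mathbf{K}+c)$, while $\bar\tau(\mathbf{K}+d(\eps_k))\to\bar\tau(\mathbf{K}+c)$ by Lemma~\ref{lem-tau-contin} applied at the good parameter $c$. Both sides of the displayed inequality therefore approach the same limit, contradicting the lower bound $\delta$. This contradiction yields the uniform convergence claimed in the theorem.

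The only real obstacle is bookkeeping, namely insisting that one removes a \emph{neighborhood} $U$ of $Z$ rather than $Z$ itself: limits of points outside an open set stay outside that open set, whereas limits of points outside a finite closed set may land in the set. This open/closed distinction is precisely what permits the invocation of Lemma~\ref{lem-summ} at the subsequential limit $c$, and is the single reason why this weaker form of the convergence statement, unlike Theorem~\ref{th-main}, requires no extra genericity assumption on $\mathbf{K}$.
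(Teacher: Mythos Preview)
Your argument is correct and is exactly the approach the paper intends: it says ``Exactly the same arguments lead to the following statement,'' meaning the contradiction argument from the proof of Theorem~\ref{th-main} carries over verbatim once one observes that the subsequential limit $c$ of $d(\eps_k)$ lies in the closed set $I_{p/q,\mathbf K+\omega}\smallsetminus U$, so that Lemma~\ref{lem-summ} and Lemma~\ref{lem-tau-contin} apply at $c$. Your explicit remark about why an \emph{open} neighborhood $U$ is needed is precisely the point.
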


\subsection{Proof of Theorems \ref{th-main-1-kl} and \ref{th-main-kl} modulo auxiliary lemmas}

The proofs are completely analogous. We should make the following replacements:
\begin{itemize}
\item $\tilde d(\cdot)$ replaces $d(\cdot)$, $\tilde{R}$ replaces $R$,  $\tilde {\mathbf K}$ replaces $\mathbf K$, $\mathcal R^{k/l}$ replaces $\mathcal R$, $\tilde a_n$ replaces $a_n$.

\item Instead of Lemmas \ref{lem-renorm}, \ref{lem-R-in-Fatou-coord}, \ref{lem-d}, and \ref{lem-d-monot}, we should use their direct analogues Lemmas  \ref{lem-renorm-1},  \ref{lem-R-in-Fatou-coord-kl}, \ref{lem-d-kl}, and \ref{lem-d-monot-kl}. Their proofs are contained in Sec.\ref{sec-renorm-proof} and \ref{sec-egg-kl}.

\item In the proof of Theorem \ref{th-main-kl}, the exceptional sequence  $\{\tilde a_n\}$ is $\tilde R^{-1}(\bbZ)$ instead of $\{1/n\}$.
\end{itemize}
All the rest of the proofs is literally the same.



\section{Complex rotation numbers for hyperbolic diffeomorphisms}
\label{sec-Buff}
Let $g$ be a hyperbolic analytic circle diffeomorphism, let $G$ be its lift to the real line.
In this section, we present more explicit construction of a complex rotation number $\bar \tau(G)$. Namely, we will construct a non-degenerate torus $\mathcal E(g)$ with modulus $\bar \tau(G)$.
The construction was suggested by X.Buff and used in  \cite{NG2012-en}, \cite{XB_NG}.

Informally, we construct a special fundamental domain of $g$ and let $\mathcal E(g)$ be the quotient of this domain by the action of $g$. This fundamental domain is an annulus in $\bbC/\bbZ$ with curvilinear boundaries, it is close to $\bbR/\bbZ$  and passes above repelling cycles and below attracting cycles of $g$.

\begin{figure}
\begin{center}
 \includegraphics[width=0.6\textwidth]{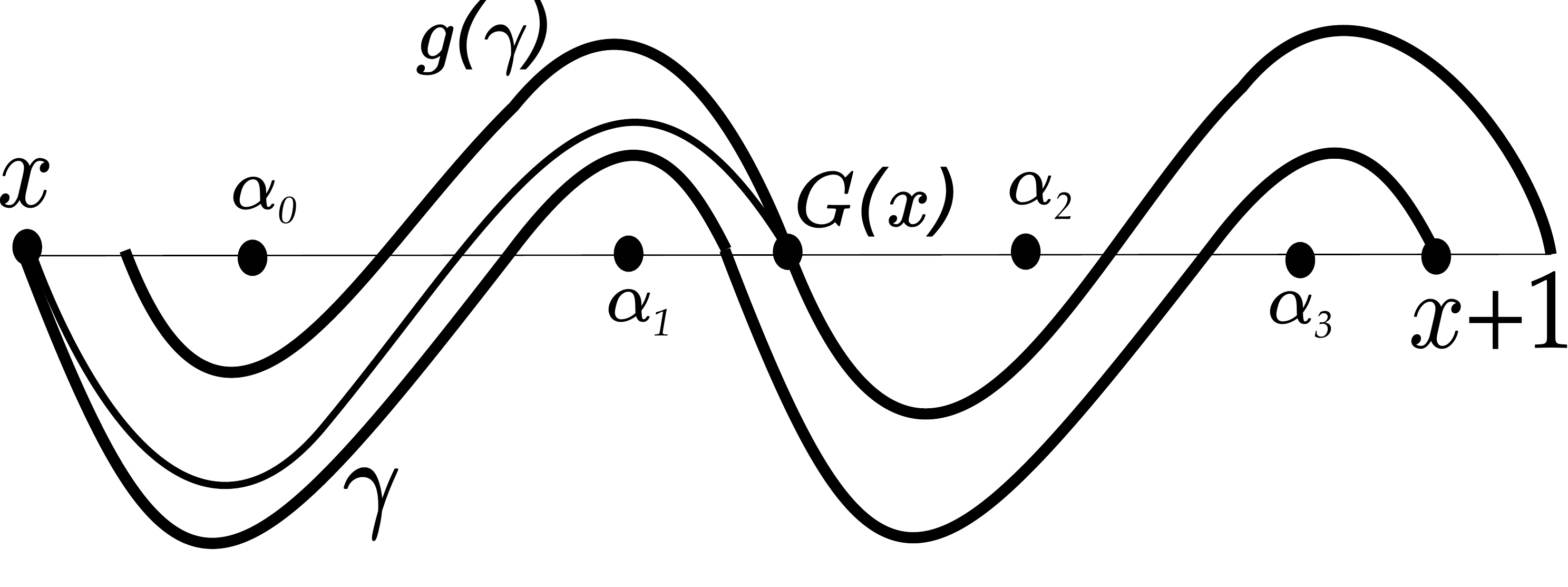}
\end{center}
\caption{Construction of $\mathcal E(g)$ and its generators for the circle diffeomorphism $g$ with two 2-periodic hyperbolic orbits}\label{fig-buff}
\end{figure}

In more detail, suppose that $g\colon\bbR/\bbZ\to \bbR/\bbZ$ has rational rotation number $p/q$ and is hyperbolic. Let $m\geq 1$ be the number of attracting cycles of $g$; it is equal to the number of repelling cycles.
Let $\alpha_j$, $j\in \bbZ/(2mq)\bbZ$, be the periodic points of $g$, ordered cyclically; even indices correspond to attracting periodic points and odd indices to repelling periodic points. Then $g(\alpha_j)=\alpha_{j+pm}$.

Let $\rho_j:= (g^{\circ q})'(\alpha_j)$. Let $\phi_j\colon(\bbC,0)\to (\bbC/\bbZ,\alpha_j)$ be the uniformizing chart for  $g^{\circ q}$, i.e. $ g^{\circ q}\circ \phi_j(z)=\phi_j(\rho_j z)$, and normalize the charts so that $\phi'_j(0)=1$.
Then $\phi_j$ extends univalently to a neighborhood of the real axis, and its range contains a neighborhood of $ (\alpha_{j-1},\alpha_{j+1}).$

For each $j\in \bbZ/(2mq)\bbZ$, let $x_j$ be a point in $(\alpha_j,\alpha_{j+1})$. Let
 $C_j$ be an arc of a circle with endpoints  $\phi_j^{-1}(x_{j-1})$, $\phi_j^{-1}(x_{j})$, such that $C_j$ is close to the real axis and located above $\bbR$ for odd $j$ and below $\bbR$ for even $j$. Put  $\gamma=\bigcup \phi_j(C_j)$. We  choose $x_j, C_j$ so that $g(\gamma)$ is above $\gamma$ in $\bbC/\bbZ$. This requires achieving $g(x_j) \in (\alpha_{j+pm}, x_{j+pm})$ for attracting $\alpha_j$ and $g(x_j) \in (x_{j+pm}, \alpha_{j+pm+1})$ for repelling $\alpha_j$, and the same for the heights of $C_j$. See \cite[Sec. 5]{XB_NG} for more details.

Then, $\gamma$ is a simple closed curve in $\bbC/\bbZ$, $g$ is univalent in a neighborhood of $\gamma$, and  $g(\gamma)$ lies above $\gamma$ in $\bbC/\bbZ$; see Fig. \ref{fig-buff}.
The curves $\gamma$ and $g(\gamma)$ bound the annulus $\Pi \subset \bbC/\bbZ$. Glueing its two sides  via $g$, we obtain the complex torus $\mathcal E (g)$.

If the lift $G$ of $g$ is fixed, this torus has two distinguished generators of the first homology group: the first generator is $\gamma$, and the second one is a curve that joins $x$ to $ G(x)$ in the lift of $\Pi\subset \bbC/\bbZ$ to $\bbC$, see Fig. \ref{fig-buff}.
The modulus of $\mathcal E (g)$ is a unique $\tau=\tau(\mathcal E (g)) \in \bbH$ such that $\mathcal E (g)$ is biholomorphically equivalent to $\bbC/(\bbZ+\tau\bbZ)$ with its first generator corresponding to  $\bbR/\bbZ$ and the second generator corresponding to $\tau\bbR/\tau\bbZ$. The modulus of
$\mathcal E (g)$ depends on the choice of a lift $G$ in the same way as described in Sec. \ref{sec-lift}.

The following theorem is contained in \cite{XB_NG}.
\begin{theorem}
\label{th-Eg=tau}
 For each hyperbolic circle diffeomoprhism $g+\omega$, the modulus $\tau(\mathcal E(g+\omega))$ equals the complex rotation number $\bar \tau_G(\omega)$.
  \end{theorem}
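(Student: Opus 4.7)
The plan is to approximate $\omega \in \bbR$ from the upper half-plane by $\omega_\delta := \omega + i\delta$, $\delta > 0$, and to identify, in the limit $\delta \to 0^+$, the tori $E(G+\omega_\delta)$ defining $\tau_G(\omega_\delta)$ with the Buff torus $\mathcal E(g+\omega)$. Fix the curve $\gamma \subset \bbC/\bbZ$ used in the construction of $\mathcal E(g+\omega)$ and lift it to a $\bbZ$-periodic curve $\tilde\gamma \subset \bbC$. By the hyperbolicity of $g+\omega$ and the careful choice of $\gamma$ recalled in Sec.~\ref{sec-Buff}, $\tilde\gamma$ and its image $G_\omega(\tilde\gamma)$, where $G_\omega := G+\omega$, bound a non-degenerate ``curvilinear strip'' $\tilde\Pi$ in $\bbC$ on which both $G_\omega$ and $z\mapsto z+1$ act univalently.

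First I would establish an alternative description of $E(G+\omega_\delta)$: for all sufficiently small $\delta>0$, the strip $\tilde\Pi_\delta$ bounded below by $\tilde\gamma$ and above by $G_\omega(\tilde\gamma)+i\delta$ is a fundamental domain for the $\bbZ^2$-action of $\Gamma_\delta := \langle z \mapsto z+1,\ z \mapsto G_\omega(z)+i\delta \rangle$ on a neighbourhood of $\bbR$ in $\bbC$; the two generators commute because $G_\omega(z+1)=G_\omega(z)+1$. Since Arnold's horizontal strip $\Pi^\eps$ (suitably enlarged) is another fundamental domain for the \emph{same} group, both quotients give one Riemann surface, and matching the two distinguished generators yields
\[
\tau_G(\omega_\delta) \;=\; \tau\bigl(\tilde\Pi_\delta / \Gamma_\delta\bigr).
\]
Next, as $\delta \to 0^+$, the strips $\tilde\Pi_\delta$ decrease to $\tilde\Pi$, and the gluing $z\mapsto G_\omega(z)+i\delta$ converges uniformly on a neighbourhood of $\tilde\gamma$ to $z\mapsto G_\omega(z)$. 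I would conclude from continuity of the modulus of a marked torus under such uniform convergence of the fundamental-domain data that $\tau(\tilde\Pi_\delta/\Gamma_\delta) \to \tau(\mathcal E(g+\omega))$. Combined with the definition of $\bar\tau_G$ as the continuous extension of $\tau_G$ to $\overline{\bbH}$, this gives
\[
\tau(\mathcal E(g+\omega)) \;=\; \lim_{\delta\to 0^+} \tau_G(\omega_\delta) \;=\; \bar\tau_G(\omega),
\]
as claimed. The matching of generators passes to the limit because the first generator is $\tilde\gamma$ throughout, and the second is the class of any path from $x\in\tilde\gamma$ to $(G_\omega+i\delta)(x)$, which depends continuously on $\delta$.

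The main obstacle is this last continuity step: the group $\Gamma_\delta$ itself varies with $\delta$, so one cannot appeal directly to continuous dependence of the modulus on a fixed covering group. I would address this by constructing, for each small $\delta$, a real-analytic diffeomorphism $\phi_\delta$ of a neighbourhood of $\tilde\Pi$ in $\bbC$ satisfying $\phi_\delta|_{\tilde\gamma}=\id$ and $\phi_\delta(G_\omega(\tilde\gamma)+i\delta)=G_\omega(\tilde\gamma)$, with $\phi_\delta\to\id$ in $C^1$ as $\delta\to 0$; pushing forward the complex structure on $\tilde\Pi_\delta$ via $\phi_\delta$ produces a family of complex tori on the fixed topological annulus $\tilde\Pi/\Gamma_0$ whose Beltrami coefficients tend to zero, and the Ahlfors--Bers theorem supplies the required continuity of the modulus. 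The non-degeneracy of $\tilde\Pi$ guaranteed by hyperbolicity of $g+\omega$ ensures that the limit torus $\mathcal E(g+\omega)$ is non-degenerate, so no collapse of modulus occurs along the limit.
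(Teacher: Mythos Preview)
Your approach is correct and constitutes a genuine proof, whereas the paper does not actually argue this point at all: it simply records that the Buff torus $\mathcal E(g+\omega)$ was taken as the \emph{definition} of $\bar\tau_g(\omega)$ in \cite[Sec.~5]{XB_NG}, so that Theorem~\ref{th-Eg=tau} is a tautology there and the content is absorbed into the proof of Theorem~\ref{th-XB_NG}. In other words, the paper defers entirely to the cited reference.

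Your argument---approximate $\omega$ by $\omega+i\delta$, recognise the curvilinear strip $\tilde\Pi_\delta$ as an alternative fundamental domain for the same $\bbZ^2$-action that produces Arnold's torus $E(G+\omega_\delta)$, and then let $\delta\to 0$ using a quasiconformal continuity estimate---is precisely the direct route one would take to establish the identity from first principles, and it is essentially the argument that underlies \cite{XB_NG}. Two remarks. First, your ``same group, different fundamental domains'' step is valid provided $\tilde\gamma$ lies in the strip where $G_\omega$ is univalent; this is guaranteed because the Buff curve $\gamma$ can be taken arbitrarily close to $\bbR/\bbZ$, so you should say this explicitly rather than rely on the parenthetical ``suitably enlarged''. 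Second, your continuity step (pushing forward the complex structure by a $C^1$-small diffeomorphism and invoking Ahlfors--Bers) is exactly the mechanism of Lemma~\ref{lem-glue-contin} in this paper, so you may simply cite that lemma instead of redoing the construction. Note also that your argument already produces the limit $\lim_{\delta\to 0^+}\tau_G(\omega+i\delta)$ directly, so you do not need to invoke the existence of $\bar\tau_G$ from Theorem~\ref{th-XB_NG-1} as a black box; this avoids any appearance of circularity with \cite{XB_NG}.
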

  Actually, the construction of $\mathcal E(g+\omega)$   was used as a definition of $\bar \tau_g(\omega)$ in \cite[Sec. 5]{XB_NG}, so this theorem follows from the result of \cite{XB_NG}.
  \begin{remark}
  \label{rem-depend-on-g+omega}
  Since $\bar \tau_g(\omega)$ equals the modulus of $\mathcal E(g+\omega)$, it only depends on $g+\omega$, which motivates our notation $\bar \tau(g+\omega) = \tau_g(\omega)$.

  Moreover, $\mathcal E(g)$ does not depend on the analytic chart on the circle. Thus for two analytically conjugate diffeomorphisms $g_1, g_2$, we have $\mathcal E(g_1)=\mathcal E(g_2)$, which implies Lemma \ref{lem-chart}.
  \end{remark}

  \begin{remark}
  \label{rem-analyticity}
  Due to \cite[Sec. 2.1, Proposition 2]{Ris}, the modulus $\tau$ of $\mathcal E(g)$ depends holomorphically on $g$.

  \end{remark}


\section{Complex rotation numbers under renormalizations.}
\label{sec-renorm-proof}

Here we prove Lemmas \ref{lem-renorm} and \ref{lem-renorm-1}.
\begin{proof}[The proof of Lemma \ref{lem-renorm}]
 If $\rot g$ is irrational, then $\bar \tau (g)=\rot g$ due to Theorem \ref{th-XB_NG}. Since $\rot (\mathcal R g) \equiv \frac{-1}{ \rot g} \pmod 1$, the number  $\rot (\mathcal R g)$ is also irrational, and
 \begin{equation}
 \label{eq-tau-rot}
 \bar \tau (\mathcal R  g) = \rot (\mathcal R g) =  \frac{-1}{ \rot (G)}  = \frac{-1}{ \bar \tau(G)}
 \end{equation}
 q.e.d. Similarly, if $g$ has a parabolic cycle, then so does $\mathcal R g$, and we have \eqref{eq-tau-rot} again. The only remaining case is the case of a hyperbolic $g$.

Take any point  $x$ that is not  a periodic point of $g$.
Fix $n\in \bbN$ such that the first return map to $[x,g(x)]$ under iterates of $g$ is $g^n$ on the one subseqment of $[x, g(x)]$ and  $g^{n-1}$ on the other.

Consider the complex torus $\mathcal E(g)=\Pi/g$ defined in the previous section; recall that $\Pi$ is the curvilinear annulus between   $\gamma$ and $g(\gamma)$. Clearly, we may assume that $\gamma$ passes through $x$, and take $\gamma$ sufficiently close to $\bbR/\bbZ$ such that $g^n$ is defined and univalent in the wider annulus between $\gamma$ and $g^{n+1}(\gamma)$. Let this annulus be $\Pi^{(n+1)}$. Let the lifts of the annuli $\Pi, \Pi^{(n+1)}$ to $\bbC$
be denoted by $\tilde \Pi, \tilde \Pi^{(n+1)}.$

 Put $\tau:=\bar \tau(G)$ for shortness. Due to Theorem \ref{th-Eg=tau}, $\mathcal E(g)$ is biholomorphic to $\bbC / \bbZ+\tau \bbZ$. The biholomorphism $H \colon \mathcal E(g) \to \bbC / \bbZ+\tau \bbZ$ lifts to the map $\hat H \colon \tilde \Pi \to \bbC$ that conjugates $G$ to $z\mapsto z+\tau$ and the shift by $1$ to itself. It extends, via iterates of $G$, to the strip $\tilde \Pi^{(n+1)}$. Now $\hat H \colon \tilde \Pi^{(n+1)} \to \bbC$ conjugates $G$ to the shift by $\tau$. Below we show that the same map $\hat H$ rectifies the complex torus $\mathcal E(\mathcal R g)$.

 First of all, introduce a suitable construction of $\mathcal E(\mathcal R g)$; this will be a particular case of the general construction from Sec. \ref{sec-Buff}.

 \begin{figure}
 \includegraphics[width=0.7\textwidth]{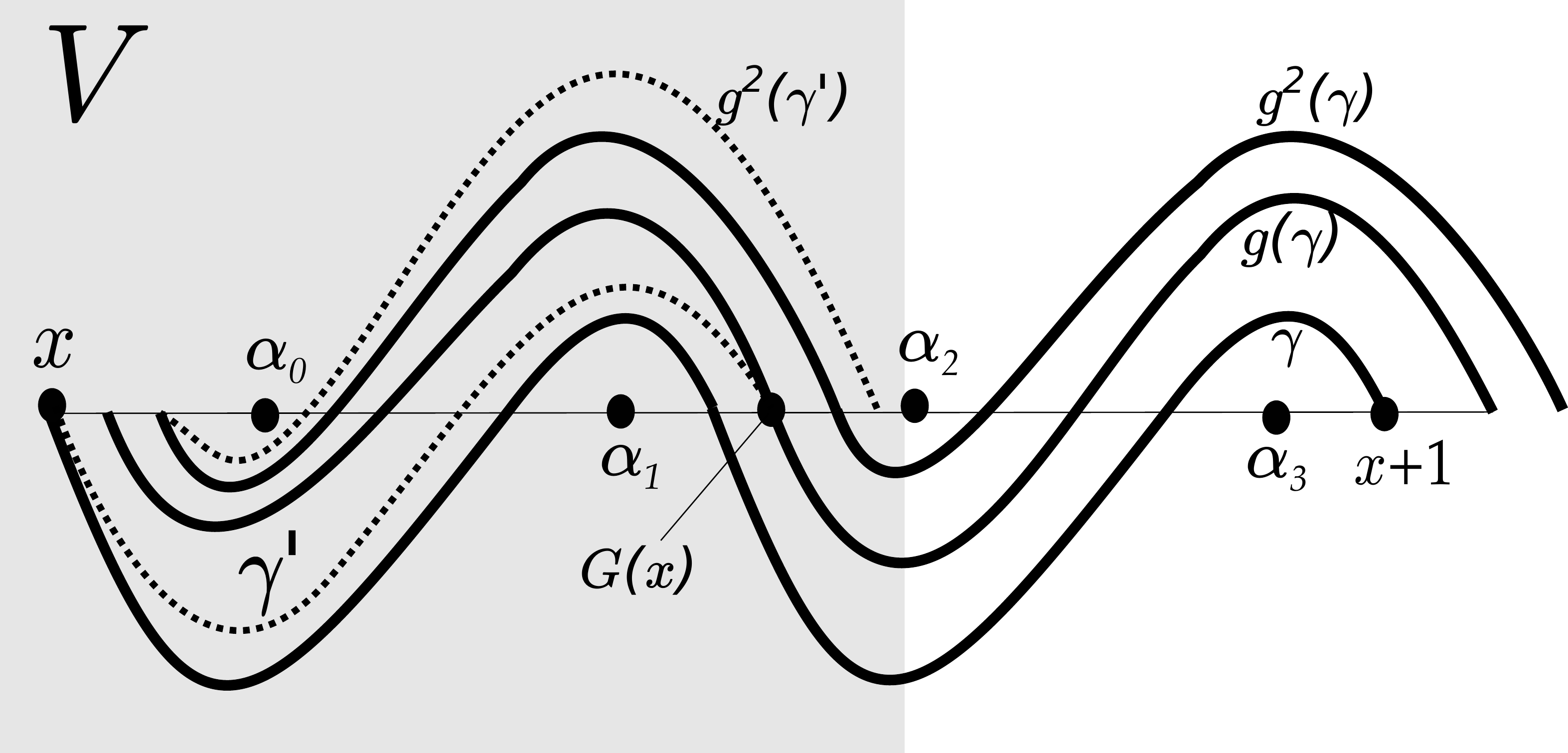}
  \caption{Construction of $\mathcal E(\mathcal R g)$ for $g$ having two $2$-periodic orbits. The curves $\gamma, g(\gamma), g^2(\gamma)$ are thick, the curves $\gamma'$ and $g^2(\gamma')$ are dotted, $V$ is shadowed}\label{fig-Buff-Rg}
 \end{figure}

Take a small neighborhood $V$ of $[x,G(x)]$ in which  $G$ is univalent.
Let a curve $\gamma'\subset V$ be the union of arcs of circles in the linearizing charts of the periodic orbits of $g$, as in Sec. \ref{sec-Buff}; we take one arc for each periodic point in $[x,G(x)]$, arcs are below attracting periodic points and above repelling periodic points. We assume that $\gamma'$ satisfies the following additional requirements (see Fig. \ref{fig-Buff-Rg}):
\begin{enumerate}
 \item $\gamma'$ joins $x$ to $G(x)$;
 \item $\gamma'$ is located between $\gamma$ and $g(\gamma)$ in $\bbC/\bbZ$.
\end{enumerate}
Then $\tilde \gamma:=\gamma'/G$ is a closed curve in $V/G$. Moreover, $\tilde \gamma$ can play the role of $\gamma$ in the construction of $\mathcal E(\mathcal R g)$, see Sec. \ref{sec-Buff}. Indeed, $\mathcal R g$ is induced by the maps $G^{n-1}-1, G^n-1$ on the quotient $V/G$, so has the same periodic points and linearizing charts as $G$; to satisfy the additional requirements above, we just need a suitable choice of $x_j$.

Let $\Omega\subset V/G$ be the annulus between $\tilde \gamma/G$ and $(\mathcal R g (\tilde \gamma))/G$. We conclude that $\mathcal E(\mathcal R g)$ is the quotient space of $\Omega$ by the action of $\mathcal R g$. Now we show that $\hat H$ (or rather its action on $V/G$) uniformizes $\mathcal E(\mathcal R g)$.

First, note that the lift of $\Omega$ to $V$ belongs to $\tilde \Pi^{(n+1)} \cap V$ because $g^n(\gamma')$ is between $g^n (\gamma)$ and $g^{n+1}(\gamma)$. Recall that the map $\hat H$ is defined in $\tilde \Pi^{(n+1)}$ and conjugates $G$ to the shift by $\tau$, so it descends to the map $\hat H \colon \Omega\to \bbC/\tau\bbZ$. Note that $\mathcal R g$ is induced by $G^{n-1}-1$, $G^n-1$ on the quotient $V/G$, and $\hat H$ conjugates  $G^{n-1}-1$, $G^n-1$ to the shifts by $(n-1)\tau-1, n\tau-1$. Thus $\hat H$ descends to the map from $\mathcal E(\mathcal R g)=\Omega/\mathcal R g$ to $\bbC/ \bbZ + \tau \bbZ$.

However its action on  the generators is non-trivial: in particular, $\hat H$ takes the curve $\tilde \gamma$ to
the generator $\tau \bbR/ \tau \bbZ$ of $\bbC/ \bbZ + \tau \bbZ$, because $\gamma'$ joins $x$ to $G(x)$.

Finally, $\mathcal E(\mathcal R g)$ is biholomorphically equivalent to $\bbC / \bbZ + \tau \bbZ $, with the first generator corresponding to $\tau\bbR/ \tau \bbZ$. We conclude that $ \bar \tau(\mathcal R f) \equiv \frac{-1}{\tau} \pmod 1$.
\end{proof}
\begin{proof}[The proof of Lemma \ref{lem-renorm-1}]
 The proof is analogous, with only the following modifications:
 \begin{itemize}
  \item We should consider the fundamental domain $[x, G^l(x)-k]$ instead of $[x, g(x)]$;
  \item $\Pi^{(n+1)}$ is the annulus between $\gamma$ and $g^M(\gamma)$, where $M$ is greater than the powers of $g$ that induce $\mathcal R^{k/l} g$;
  \item $\gamma'$ should join $x$ to $G^l(x)-k$ and be located between $\gamma$ and $g^l(\gamma)$;
  \item Consequently, $\hat H$ takes $\gamma'$ to the generator $\tau l -k$ of $\bbC/\bbZ+\tau \bbZ$.
 \end{itemize}
We get that $\mathcal E(\mathcal R ^{k/l}g)$ is biholomorphic to $\bbC/\bbZ+\tau \bbZ$, with the first generator corresponding to $\tau l -k$.  The lattice $\bbZ+\tau\bbZ$ is generated by  $\tau l -k$ and $-\tau r-s$, where $kr+ls=1$. So the modulus of $\mathcal E(\mathcal R ^{k/l}g)$ is $\bar \tau (\mathcal R ^{k/l}g) = \frac {\tau r+s}{-\tau l+k}$.
\end{proof}

\section{First-return maps for $k/l$-renormalizations}
\label{sec-proof-lem-kl}
In this section, we prove Lemma \ref{lem-renorm-kl}.
The following lemma reduces the question to the structure of orbits of $g$.
\begin{lemma}
\label{lem-firstreturn}
 Let $g$ be a circle diffeomorphism. Suppose that $g^b(x)$ is the first point of the orbit of $x$ that belongs to the arc $I=[x, g^{a}(x)]$, where $a<b$. Then the first-return map to $I$ under $g$ equals $g^b$ on $[x, g^{a-b}(x)]$ and $g^{b-a}$ on $[g^{a-b}(x), g^b(x)]$.
\end{lemma}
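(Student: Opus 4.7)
The plan is to prove Lemma \ref{lem-firstreturn} by a combinatorial argument exploiting the fact that an orientation-preserving circle diffeomorphism preserves cyclic order of orbits.

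First, I would identify the candidate partition point $\alpha := g^{a-b}(x)$ and show it lies in $I$. Since $g^{b-a}$ is an orientation-preserving homeomorphism sending $\alpha \mapsto x$ and $g^a(x) \mapsto g^b(x)$, and the target arc $[x, g^b(x)]$ lies in $I$ by hypothesis, a local analysis at the common endpoint $g^a(x) \in \partial I$ (which maps to the interior point $g^b(x) \in I^\circ$) forces the source arc $[\alpha, g^a(x)]$ to lie in $I$ as well; in particular $\alpha \in I$. An endpoint computation then gives the candidate first-return branches
\begin{align*}
 g^b\colon [x,\alpha] &\to [g^b(x), g^a(x)] \subset I,\\
 g^{b-a}\colon [\alpha, g^a(x)] &\to [x, g^b(x)] \subset I.
\end{align*}

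It remains to rule out earlier returns. The easy case is $y \in [x, \alpha]$: the image $g^{b-a}([x,\alpha]) = [g^{b-a}(x), x]$ lies in the complementary arc of $I$, since $g^{b-a}(x) \notin I$ by the hypothesis on the orbit of $x$ while $x \in \partial I$; hence $g^{b-a}(y) \notin I$. The harder case is to show $g^j(I) \cap I = \emptyset$ for every $j \in \{1, \dots, b-1\} \setminus \{b - a\}$. For most such $j$, both endpoints $g^j(x)$ and $g^{j+a}(x)$ of $g^j(I)$ lie outside $I$ directly by the hypothesis $g^i(x) \notin I$ for $i \in [1, b-1] \setminus \{a\}$; for $j$ with $j + a \ge b$, the cyclic location of $g^{j+a}(x)$ is pinned down via Poincar\'e's correspondence between the cyclic order of the orbit of $x$ under $g$ and the orbit of $0$ under the rigid rotation of angle $\rot g$. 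A ``no wrap-around'' argument then finishes: an arc whose endpoints both lie outside $I$ cannot intersect $I$ unless it wraps around the whole circle, and this is excluded by the same cyclic-order comparison.

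The main technical obstacle I anticipate is this last cyclic-order step. Transferring the finite cyclic combinatorics of iterates under $g$ to those under the rigid rotation requires careful bookkeeping of the indices $\{g^i(x)\}_{0 \le i < a+b}$ relative to the endpoints $\{x, g^a(x)\}$, but once reduced to the rotation model it is an immediate finite check; this transfer is a standard consequence of orientation preservation for circle homeomorphisms.
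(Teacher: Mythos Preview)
Your outline takes a substantially longer route than the paper's, and the detour through Poincar\'e's cyclic-order correspondence is where it develops a real gap.

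The paper's argument is a short self-contained contradiction that never leaves the orbit segment $\{g^i(x)\}_{0\le i\le b}$. Suppose some $y\in I$ satisfies $g^c(y)\in I$ for some $c$ below the claimed return time. Then $g^c(I)$ meets $I$, while $g^c$ sends the endpoints of $I$ outside $I$ (by the minimality of $b$); hence $I\Subset g^c(I)$. But then $g^b(x)\in I\subset g^c(I)$ yields a preimage $z=g^{b-c}(x)\in I$ with $1\le b-c<b$, contradicting the first-return hypothesis for $x$. No rotation number, no iterates beyond time $b$, no rigid-rotation model.

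By contrast, you try to prove $g^j(I)\cap I=\emptyset$ outright for each $j\in\{1,\dots,b-1\}\setminus\{b-a\}$, and for the indices with $j+a\ge b$ you appeal to the cyclic-order correspondence with the rotation by $\rot g$ in order to locate $g^{j+a}(x)$. This is both unnecessary and, as stated, incomplete. Unnecessary, because your ``wrap-around'' case is precisely the inclusion $I\subset g^j(I)$, and that inclusion \emph{already} gives $g^{b-j}(x)\in I$ --- the paper's contradiction --- without ever needing to know where $g^{j+a}(x)$ sits. Incomplete, because the lemma makes no irrationality assumption on $\rot g$: for rational $\rot g=p/q$ the comparison orbit $\{n\rot g\}$ is $q$-periodic, and the correspondence you invoke does not pin down the cyclic position of $g^{j+a}(x)$ once the relevant indices differ by a multiple of $q$. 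Replacing the Poincar\'e step with the one-line inclusion argument collapses your proof to the paper's.
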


\begin{proof}
Our goal is to prove that for each point $y\in [x, g^{a-b}(x)]$, its image $g^b(y)\in [g^b(x), g^a(x)]$ is indeed the \emph{first} return to $I$; and for $y\in [g^{a-b}(x), g^a(x)]$, the first return is $g^{b-a}(y) \in [x, g^b(x)]$. Suppose that some point $y$ returns to $I$ earlier than that, say at $g^c(y)\in I$, $c<b$. Since $g^c$ takes the endpoints of $I$ to somewhere outside $I$ and $g^c(I)$ intersects $I$, we have $I\Subset g^c(I)$. Prove that this is not possible.

Indeed, in this case, $g^b(x) = g^c (z)$ for some $z\in I$, and $z$ is inside $I$.
Since $c<b$, we have $g^{b-c}(x) = z \in I$, thus  $x$ returns to the interior of  $I$ earlier than in $b$ iterates. A contradiction.
\end{proof}
Now the following lemma implies Lemma \ref{lem-renorm-kl}.
\begin{lemma}
\label{lem-firstpt}
 For any diffeomorphism $g$ with  $\rot g>k/l$ sufficiently close to $k/l$, the first point of the orbit of $x$ that belongs to $[x,g^l(x)]$ has the form $g^{Nl-r}(x)$.
\end{lemma}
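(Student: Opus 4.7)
The plan is to reduce the first-return question to an explicit modular-arithmetic computation for the rigid rotation $T_\rho$ with $\rho=\rot g$. For analytic $g$ with irrational $\rho$, Denjoy's theorem provides a topological conjugacy $g\sim T_\rho$ preserving cyclic order on the circle, so the first $m>l$ with $g^m(x)\in[x,g^l(x)]$ coincides with the first such $m$ for $T_\rho$ acting on $0$. For $\rho=p/q$ rational and close to $k/l$, we have $q\ge 1/(l\delta)$ where $\delta:=\rho-k/l$, which exceeds the first-return time computed below; on this initial segment of the orbit the cyclic order of $\{g^m(x)\}$ still agrees with $\{T_\rho^m(0)\}$, so the same reduction applies.

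Setting $\rho=k/l+\delta$ with $\delta>0$ small, we have $T_\rho^l(0)=l\delta$, and for an arbitrary $m$ the residue $j:=mk\bmod l\in\{0,\dots,l-1\}$ gives
\[
T_\rho^m(0)=\Bigl(\tfrac{j}{l}+m\delta\Bigr)\bmod 1.
\]
The condition $T_\rho^m(0)\in[0,l\delta]$ becomes $j/l+m\delta-N\in[0,l\delta]$ for some integer $N\ge 0$, equivalently $m\in\bigl[(Nl-j)/(l\delta),\,(Nl-j)/(l\delta)+l\bigr]$. The case $N=0$ forces $j=0$ and $m\le l$, yielding only the excluded trivial return $m=l$. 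For $N\ge 1$ the quantity $Nl-j$ attains its minimum value $1$ uniquely at $(N,j)=(1,l-1)$; any other admissible pair gives $Nl-j\ge 2$ and hence pushes $m$ up by at least $1/(l\delta)$.

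The smallest $m>l$ therefore satisfies $mk\equiv l-1\equiv-1\pmod{l}$. Multiplying by $r$ (the inverse of $k$ modulo $l$, available since $kr+ls=1$) gives $m\equiv-r\pmod{l}$, i.e.\ $m=Nl-r$ for the unique positive integer $N$ with $Nl-r\in[1/(l\delta),\,1/(l\delta)+l]$. The only real subtlety is the rational-$\rho$ case, where one must justify the cyclic-order comparison with the rigid rotation on an orbit segment longer than the first-return time; the bound $q\ge 1/(l\delta)$ makes this automatic, so the argument goes through uniformly in $\rho$.
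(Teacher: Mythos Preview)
Your direct modular-arithmetic computation for the rigid rotation $T_\rho$ is a clean alternative to the paper's continued-fraction argument, and for irrational $\rho$ the reduction to $T_\rho$ via the combinatorial order of orbits (Poincar\'e's classical fact suffices; Denjoy's conjugacy is overkill) is correct.

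The rational case, however, has a genuine gap. You claim that $q\ge 1/(l\delta)$ \emph{exceeds} the first-return time, but your own computation places that time in $[1/(l\delta),\,1/(l\delta)+l]$, so it can equal or exceed $q$. This happens precisely when $c:=lp-kq=1$, i.e.\ when $\rho=p/q$ is a Farey neighbour of $k/l$; then $1/(l\delta)=q$ and the first return for $T_\rho$ occurs at $m=q$ with $T_\rho^{q}(0)=0$, an \emph{endpoint} of $[0,l\delta]$. At such an $m$ the cyclic-order comparison with $T_\rho$ gives no information about $g^{m}(x)$, and in fact the first-return time for $g$ need not be $q$. For a concrete instance take $k/l=1/2$, $\rho=2/3$, and $x$ just to the right of an attracting fixed point of $g^3$: then $g^3(x)$ lies to the left of $x$, so $g^3(x)\notin[x,g^2(x)]$, and the first return is at $m=5$, not $m=3$. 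The conclusion $m\equiv -r\pmod l$ still holds here, but your argument does not establish it, since for $c=1$ the open interval $(0,l\delta)$ contains no $T_\rho$-orbit points at all and the comparison never certifies a return for $g$.

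The paper treats the rational case by perturbing $g$ to a nearby diffeomorphism with irrational rotation number and the same continued-fraction coefficient $a_{m+1}$, keeping the cyclic order of the relevant initial orbit segment intact; this sidesteps the endpoint ambiguity. Your approach could be repaired the same way, or by a separate inductive argument in the Farey-neighbour case, but as written the sentence ``the bound $q\ge 1/(l\delta)$ makes this automatic'' is not correct.
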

\begin{proof}

First, we prove this for an irrational rotation.
For an irrational number $\alpha, 0<\alpha<1$, let its continued fraction representation be $\alpha =\frac{1}{a_1+\frac{1}{a_2+\dots}}$, and let $p_n/q_n$ be convergents of this continued fraction: $p_n/q_n = \frac{1}{a_1+\frac{1}{\dots+\frac {1}{a_n}}}$.
Suppose that  $\alpha\in\bbR\smallsetminus \bbQ$ is close enough to $k/l$ so that $k/l$ is a convergent to $\alpha$, $k/l=p_m/q_m$ for some $m$. Suppose that  $\alpha>k/l$. Let $T_{\alpha} \colon x \mapsto x+\alpha$ be an irrational rotation, $x_n := T^n_{\alpha}(x)$.

We will prove that the statement of the lemma holds true for $T_{\alpha}$ with  $N=a_{m+1}+2$.
We will use the following well-known facts about continued fractions.
\begin{enumerate}
\item \label{it-odd}The number $p_n/q_n$  approximates $\alpha$ from above for odd $n$, and from below for even $n$  (\cite[Theorem 4]{Kh}).
 \item \label{it-rec}$p_{n+1} = p_n a_{n+1}+p_{n-1}$, and $q_{n+1} = q_n a_{n+1}+q_{n-1}$  (\cite[Theorem 1]{Kh}).
\item \label{it-prod}$p_n q_{n-1}-p_{n-1}q_n=(-1)^{n+1}$ for all $n$ (\cite[Theorem 2]{Kh}).
 \item \label{it-close}For each $n$, $x_{q_n}$ is the point of a closest return to $x$: no points of the orbit $x_N, 0<N<q_n$, are closer to $x$ than $x_{q_n}$ (\cite[Theorem 17]{Kh}). Further, $x_{q_n}$ is to the left of $x$ if $n$ is odd, and to the right otherwise (this is clear from item \ref{it-odd}).
 \item \label{it-first} For any $n$, the first-return map on $[x_{q_{n}}, x_{q_{n+1}}]$ under $T_{\alpha}$ is $(T_{\alpha})^{q_{n+1}}$ on $[x, x_{q_n}]$ and $(T_{\alpha})^{q_{n}}$ on $[x, x_{q_{n+1}}]$. In particular, $x_{q_n+q_{n+1}}$ is the first point of the orbit of $x$ that belongs to $(x,x_{q_{n}})$.
\end{enumerate}
The last item is easily proved by induction.

We use all these facts for $n=m$. Due to item \ref{it-odd}, $m$ is even, because $\alpha>k/l = p_m/q_m$. Now item \ref{it-prod} implies $q_{m-1}k-p_{m-1}l=-1$. On the other hand, $rk+sl=1$ due to the definition of $k,l$, thus $(l-r)k-(k+s)l=-1$.

However there is the only number $a$ between $0$ and $l$ such that $ak \equiv -1 \pmod l$. Thus  $a=l-r=q_{m-1}$, and $k+s= p_{m-1}$. We conclude that $q_{m+1}= q_m a_m+q_{m-1}=l (a_m+1)-r$ due to item \ref{it-rec}, and item \ref{it-first} implies the statement of Lemma \ref{lem-firstpt} for $T_{\alpha}$.

Now, we prove this statement for arbitrary $g$. If $g$ has an irrational rotation number $\alpha$, its orbits are   ordered on the circle in the same way as orbits of $T_{\alpha}$, and the result follows.
Suppose that $g$ has rational rotation number $\rot g >k/l$ close to $k/l$, and $\rot g = \frac{1}{a_1+\frac{1}{\dots+\frac {1}{a_m+\frac{1}{a_{m+1}\dots}}}}$,  $p_m/q_m=k/l$ as before. Then we may perturb $g$ so that $a_{m+1}$ does not change, $\rot g$ becomes irrational, and the order of the first $(a_{m+1}+2)l-r$ points of the orbit of $x$ under $g$ does not change. This reduces the case of $\rot g \in \bbQ$ to the case $\rot g \notin \bbQ$, and completes the proof.

\end{proof}

\section{Continuity of complex rotation numbers (Lemma \ref{lem-tau-contin})}
\label{sec-contin}
 The proof of Lemma \ref{lem-tau-contin} is analogous to the proof of \cite[Lemma 5]{XB_NG}.
We split the proof into three lemmas. For all the lemmas below,  $g_k$ is a sequence  of analytic circle diffeomorphisms that tends to an analytic circle diffeomorphism $g$ uniformly on some neighborhood of $\bbR/\bbZ$. We will use repeatedly that
 $\rot g_{k} \to \rot g$ as $k\to \infty$.

\begin{lemma}
\label{lem-irr}
 Suppose that $\rot g$ is irrational.  Then $ \bar \tau (g_{k}) \to  \bar \tau (g)=\rot g$.
\end{lemma}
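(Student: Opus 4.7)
The plan is to split into cases according to the dynamical type of $g_k$ and read off the conclusion in each case directly from Theorem~\ref{th-XB_NG}. Since rotation number depends continuously on a circle homeomorphism in the $C^0$ topology, we already know $\rot g_k \to \rot g$. By Theorem~\ref{th-XB_NG}(1) the hypothesis $\rot g\notin\bbQ$ gives $\bar\tau(g)=\rot g$, so it suffices to prove $\bar\tau(g_k)\to \rot g$.

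In the first (easy) case, suppose $\rot g_k$ is irrational, or $\rot g_k$ is rational and $g_k$ has a parabolic cycle. Then Theorem~\ref{th-XB_NG}(1)-(2) gives $\bar\tau(g_k)=\rot g_k$, and continuity of the rotation number closes the case. In the remaining (hard) case $g_k$ is hyperbolic with irreducible rotation number $p_k/q_k$, and Theorem~\ref{th-XB_NG}(3) confines $\bar\tau(g_k)$ to a closed disk of radius $D_{g_k}/(4\pi q_k^2)$ tangent to $\bbR/\bbZ$ at $p_k/q_k$. Since $\rot g$ is irrational, $p_k/q_k\to \rot g$ forces $q_k\to \infty$, so the center of the disk tends to $\rot g$. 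It remains only to show the radius shrinks to $0$, i.e.\ that $D_{g_k}$ stays bounded.

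The uniform convergence $g_k\to g$ on a complex neighborhood $U$ of $\bbR/\bbZ$ yields, by Cauchy's estimates, uniform convergence $g_k'\to g'$ and $g_k''\to g''$ on $\bbR/\bbZ$. Because $g'>0$ is bounded away from zero, $1/g_k'$ is eventually uniformly bounded, and therefore $D_{g_k}=\int_{\bbR/\bbZ}|g_k''/g_k'|\,dx$ converges to $D_g<\infty$. Combined with $q_k\to\infty$, this makes the disk from Theorem~\ref{th-XB_NG}(3) degenerate to the point $\rot g$, so $\bar\tau(g_k)\to\rot g=\bar\tau(g)$.

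The only step requiring any attention is ensuring the bound on $D_{g_k}$ is genuinely uniform in $k$, and this is handled by Cauchy's estimates applied to the complex-analytic convergence in $U$. Everything else is bookkeeping: passing to subsequences according to the dichotomy hyperbolic/parabolic-or-irrational and observing that on each type of subsequence the stated convergence is automatic.
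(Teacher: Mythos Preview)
Your proof is correct and follows essentially the same approach as the paper: split according to whether $\rot g_k$ is irrational (or $g_k$ has a parabolic cycle) versus $g_k$ hyperbolic, and in the latter case use the disk estimate from Theorem~\ref{th-XB_NG}(3) together with $q_k\to\infty$ and $D_{g_k}\to D_g$. The only difference is cosmetic: the paper groups the parabolic case with the rational-hyperbolic case (invoking both of the last two subcases of Theorem~\ref{th-XB_NG} at once), whereas you group it with the irrational case; and you spell out the Cauchy-estimate justification for $D_{g_k}\to D_g$ that the paper leaves implicit.
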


\begin{lemma}
\label{lem-parab}
 Suppose that $\rot g$ is rational and $g$ has parabolic cycles.

 (1) Suppose that $g$ has at most one parabolic cycle.  Then $ \bar \tau (g_{k}) \to  \bar \tau (g)=\rot g$.

 (2) Suppose that $g_{k} $ is a monotonic sequence. Then $ \bar \tau (g_{k}) \to  \bar \tau (g)=\rot g$.
\end{lemma}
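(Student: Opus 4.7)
The plan is to split $g_k$ into subsequences according to $\rot g_k$, handle every one but a single ``resonant hyperbolic'' subsequence by Theorem~\ref{th-XB_NG-1}, and tackle that subsequence via Lemma~\ref{lem-disc-radius} plus a direct geometric pinch argument. Write $p/q = \rot g$ in lowest terms; uniform convergence $g_k \to g$ on a complex neighborhood gives $\rot g_k \to p/q$. If $\rot g_k$ is irrational, or if $g_k$ is rational with a parabolic cycle, then $\bar \tau(g_k) = \rot g_k \to p/q$ directly. If $\rot g_k = p_k/q_k \neq p/q$ is rational with $g_k$ hyperbolic, the Diophantine bound $|p/q - p_k/q_k| \ge 1/(q q_k)$ forces $q_k \to \infty$, and the disk radius $D_{g_k}/(4\pi q_k^2)$ from Theorem~\ref{th-XB_NG-1}(3) shrinks to zero (uniform convergence of the analytic $g_k$ on a complex neighborhood makes $D_{g_k}$ uniformly bounded).

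The essential case is $\rot g_k = p/q$ with $g_k$ hyperbolic. Here I would apply Lemma~\ref{lem-disc-radius} to place $\bar \tau(g_k)$ in a disk at $p/q$ of radius $R_k = \bigl(2\pi q \sum_{x \in \mathrm{Per}(g_k)} |\log \rho_x^{(k)}|^{-1}\bigr)^{-1}$, and aim to show $R_k \to 0$ by exhibiting a real periodic orbit of $g_k$ whose multiplier tends to $1$. The candidates come from the parabolic orbit of $g$: if $x_0$ is a parabolic fixed point of $g^q$ of multiplicity $m \ge 2$, then $g_k^q - \id \to g^q - \id$ on a complex neighborhood of $x_0$, and Rouché's theorem gives $m$ zeros of $g_k^q - \id$ there (counted with multiplicity, possibly complex), each with multiplier tending to $1$. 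When $m$ is odd at least one zero is real and on the circle, which suffices.

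The main obstacle is $m$ even with the $m$ zeros forming complex-conjugate pairs off the circle, as happens when the perturbation pushes $g_k^q$ above the diagonal near $x_0$ (e.g.\ the relevant side of the monotonic family in Theorem~\ref{th-main-1}). Then no real periodic orbit of $g_k$ approaches $x_0$ and Lemma~\ref{lem-disc-radius} alone yields only a bounded radius. I would close the gap by a direct geometric argument in the spirit of Sec.~\ref{sec-Buff}: since $g_k^q - \id \to 0$ uniformly on a neighborhood $V$ of $x_0$, replace the nonexistent linearizing chart on $V$ by the identity chart and choose $\gamma_k$ close to $\bbR/\bbZ$ there, forcing the fundamental annulus $\Pi_k$ of $\mathcal E(g_k)$ to admit a cross-cut of length tending to zero near $x_0$. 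Standard annulus-modulus estimates then give $\mathrm{mod}(\Pi_k) \to 0$ and hence $\Im \bar \tau(g_k) \to 0$; combined with $\Re \bar \tau(g_k) \equiv p/q \pmod 1$, this yields $\bar \tau(g_k) \to p/q$. The two hypotheses enter precisely to make this pinch unambiguous: under (1), ``at most one parabolic cycle'' rules out several competing pinches, and under (2) monotonicity forces any pinches to degenerate in lockstep; if two parabolic cycles were allowed to vanish at different rates they would produce distinct limit moduli, which is the discontinuity of Remark~\ref{rem-discont}. The whole argument parallels the proof of \cite[Lemma~5]{XB_NG}.
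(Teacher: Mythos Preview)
Your decomposition into subsequences and the treatment of all cases except the last one match the paper and are correct. The gap is in the ``resonant hyperbolic detached'' case: $\rot g_k = p/q$, $g_k$ hyperbolic, and the real periodic orbits of $g_k$ bounded away from the parabolic set of $g$.

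There your pinch argument fails. A single short cross-cut does \emph{not} force an annulus modulus to be small: with the Euclidean metric $\rho\equiv 1$, extremal length gives only the lower bound $\mathrm{mod}(\Pi_k)\ge \ell^2/\mathrm{Area}$, not an upper bound. Since $\mathcal E(g_k)$ is independent of the choice of $\gamma$ and satisfies $\mathrm{mod}(\Pi_k)=\Im\bar\tau(g_k)$, no alternative $\gamma$ can help. More directly: your argument, as written, nowhere uses hypotheses (1) or (2), so if it worked it would prove $\Im\bar\tau(g_k)\to 0$ in the setting of Remark~\ref{rem-discont} as well---but there $\bar\tau(g_\eps)\to i$. (The side claim $\Re\bar\tau(g_k)\equiv p/q$ is also incorrect; only the tangent-disk containment of Theorem~\ref{th-XB_NG-1}(3) is available.) Your ``competing pinches'' picture is not the right mechanism either: extra pinches could only make an annulus thinner, not fatter.

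The paper instead invokes Lemma~\ref{lemma-qctwist}, which gives $\dist_{\bbH/\bbZ}\bigl(q\bar\tau(g_k),\,-1/\sigma(g_k)\bigr)\le 5D_{g_k}$, and then shows $|\sigma(g_k)|\to\infty$. The quantity $\sigma(g_k)$ is a \emph{signed} sum of logarithmic terms in the linearizing charts of the real periodic points of $g_k$. On each arc $[\alpha_j,\alpha_{j+1}]$ containing a vanished parabolic point the contribution diverges, to $+\infty$ or $-\infty$ according to whether that parabolic cycle attracted from the right or the left. Hypothesis~(1) means a single parabolic cycle, so all $q$ divergent contributions share one sign; hypothesis~(2) means all vanishing parabolic cycles attract from the same side (monotonicity), again one sign. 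Either way $\sigma(g_k)\to\pm\infty$, hence $q\bar\tau(g_k)\to 0$ in $\bbH/\bbZ$, and the tangent-disk bound at $p/q$ finishes. Without (1) or (2) the divergent terms can cancel, which is precisely the phenomenon in Remark~\ref{rem-discont}.
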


\begin{lemma}
\label{lem-hyp}
 Suppose that $\rot g$ is rational, and $g$ is hyperbolic.  Then $ \bar \tau (g_{k}) \to  \bar \tau (g)$.
\end{lemma}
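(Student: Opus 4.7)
The plan is to exploit the explicit construction of $\mathcal E(g)$ from Section \ref{sec-Buff} and to run the same construction for the $g_k$ in a manner that depends continuously on $k$, then read off the modulus.

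First I would check that for $k$ sufficiently large, $g_k$ is hyperbolic with $\rot g_k = \rot g = p/q$ and shares the combinatorial structure of periodic orbits with $g$. Each periodic point $\alpha_j$ of $g$ is a fixed point of $g^q$ with $(g^q)'(\alpha_j) = \rho_j \neq 1$; by the implicit function theorem applied to $g_k^q - \id$, each such fixed point persists under the perturbation, yielding fixed points $\alpha_{j,k}$ of $g_k^q$ with $\alpha_{j,k} \to \alpha_j$ and multipliers $\rho_{j,k} \to \rho_j$. The argument principle, applied in a complex neighborhood of $\bbR/\bbZ$ where $g_k^q - \id \to g^q - \id$ uniformly, rules out extra periodic orbits of period dividing $q$ for large $k$. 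Hence $g_k$ is hyperbolic, $\rot g_k = p/q$, and the attracting/repelling alternation along the circle matches that of $g$.

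Next I would reuse the combinatorial data chosen for $g$ to build $\mathcal E(g_k)$. The Koenigs coordinates $\phi_{j,k}$ of $g_k^q$ at $\alpha_{j,k}$, normalized by $\phi_{j,k}'(0) = 1$, depend continuously on $g_k$, so $\phi_{j,k} \to \phi_j$ uniformly on a fixed disk containing the arcs $C_j$ used in the construction of $\gamma$. Set $\gamma_k := \bigcup_j \phi_{j,k}(C_j)$. For $k$ large, $\gamma_k$ is a simple closed curve close to $\gamma$, and $g_k(\gamma_k)$ still lies above $\gamma_k$ in $\bbC/\bbZ$ (the required inequalities on the heights of the arcs are open conditions, preserved under small perturbation). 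The fundamental annulus $\Pi_k$ between $\gamma_k$ and $g_k(\gamma_k)$ therefore converges to $\Pi$ as a domain in $\bbC/\bbZ$; lifting, the strips $\tilde\Pi_k$ converge to $\tilde\Pi$.

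Finally, by Theorem \ref{th-Eg=tau} the number $\bar\tau(g_k) = \tau(\mathcal E(g_k))$ is read off from a uniformization $\hat H_k \colon \tilde\Pi_k \to \bbC$ that commutes with the shift by $1$, conjugates $G_k$ to $z \mapsto z + \tau_k$, and is normalized by sending a fixed basepoint $x_0 \in \gamma$ to $0$. Since $G_k \to G$ uniformly on a neighborhood of $\bbR/\bbZ$ and $\tilde\Pi_k \to \tilde\Pi$ as pointed domains, a Carath\'eodory-kernel argument applied to the universal covers (both are half-planes, and the covering deck groups for $\Pi_k/g_k$ vary continuously) yields $\hat H_k \to \hat H$ locally uniformly on $\tilde\Pi$. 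Evaluating at $x_0$ gives $\tau_k = \hat H_k(G_k(x_0)) - \hat H_k(x_0) \to \hat H(G(x_0)) - \hat H(x_0) = \tau$, proving $\bar\tau(g_k) \to \bar\tau(g)$. The main obstacle here is this last continuity statement for uniformizations of converging annuli with identifications: the cleanest way is to pass to universal covers (where the classical Carath\'eodory kernel theorem for simply connected domains applies) and use the convergence of deck transformations $G_k \to G$ to descend to the quotient. Continuity of the Koenigs charts in step two, while classical, also requires care, but reusing the same fixed arcs $C_j$ at positive distance from the boundaries of the linearization disks avoids any issue with the domains of $\phi_{j,k}$ shrinking.
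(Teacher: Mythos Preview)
Your step 1 is correct and more carefully stated than in the paper, which takes it for granted. After that your route diverges from the paper's, and your step 3 has a gap.

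The paper skips your step 2 entirely: rather than building new curves $\gamma_k$ from the linearizing charts of $g_k$, it keeps the \emph{fixed} curve $\gamma$ and neighbourhood $\Pi_\delta$ constructed for $g$, notes that $g_k$ is also univalent on $\Pi_\delta$ for $k$ large, and realises $\mathcal E(g_k)$ as $\Pi_\delta/g_k$ with this same $\gamma$. This reduces the lemma to a clean statement (Lemma~\ref{lem-glue-contin}): if gluing maps on a fixed annulus converge uniformly, the moduli of the quotient tori converge. The paper proves that by writing down an explicit quasiconformal homeomorphism from $\Pi_\delta/g$ to $\Pi_\delta/g_k$, interpolating between $\id$ and $g_k\circ g^{-1}$ via a smooth cutoff in the imaginary direction, and checking that its Beltrami coefficient tends to zero uniformly.

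Your step 3 attempts a Carath\'eodory-kernel argument instead. You correctly flag this as the main obstacle, but the fix you propose does not work as stated. The phrase ``both are half-planes'' is wrong for the objects in play: the universal cover of the torus $\Pi_k/g_k$ is $\bbC$, not a half-plane, so the classical kernel theorem for Riemann maps of simply connected domains onto the disk does not apply to $\hat H_k$. If you meant the universal cover of the annulus $\Pi_k$, then $g_k$ only identifies boundary components and is not a deck transformation of that cover, so ``the covering deck groups vary continuously'' does not carry the content you need. A genuine normal-families argument here would require an a priori bound on $\Im\tau_k$ (to get a uniform bound on $\hat H_k-\id$), then Hurwitz to show the limit is univalent, then uniqueness of the torus uniformization to identify the limit with $\hat H$; none of these pieces is supplied. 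The paper's explicit quasiconformal interpolation sidesteps all of it.
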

These lemmas constitute the proof of Lemma \ref{lem-tau-contin}. Also, Lemmas \ref{lem-parab} and \ref{lem-hyp} show that any bubble $B_{p/q, F_{\omega}}$ in a monotonic family is a continuous curve that starts and finishes at $p/q$.
\subsection{Irrational rotation numbers (Lemma \ref{lem-irr})}

 For the values of $k$ such that $\rot g_{k}$ is irrational, we have $ \bar \tau (g_{k})=\rot g_{k}$ which tends to $\rot g$, and the result follows.

 For the values of  $k$ such that $\rot g_{k}$ is rational, $\rot g_{k}= \frac{p_{k}}{q_{k}}$, the last two subcases in Theorem \ref{th-XB_NG} show that the distance between $ \bar \tau(g_{k})$ and  $\rot g_{k}$ is at most  $2\cdot D_{g_{k}}/(4\pi q_{k}^2)$. Since  $\rot g_{k} \to \rot g= \bar \tau(g)$, it suffices to prove that $ D_{g_{k}}/(4\pi q_{k}^2)$ tends to zero.

 Clearly, the denominator $q_{k}$ of $\rot g_{k}$ tends to infinity as $\rot g_{k}\to \rot g\notin \bbQ$. As for the distortion $D_{g_{k}} = \int_{\bbR/\bbZ} |\frac{g''_{k}}{g'_{k}}|$, we have $D_{g_{k}} \to D_g$ because $g_{k}$ tends to $g$ uniformly on some neighborhood of $\bbR/\bbZ$. So  $ D_{g_{k}}/(4\pi q_{k}^2)$ tends to zero, therefore $ \bar \tau (g_{k}) \to  \bar \tau (g)=\rot g$.

\subsection{Diffeomorphisms with parabolic cycles (Lemma \ref{lem-parab})}
 We split $g_k$ into  the following subsequences.
\begin{enumerate}
\item The subsequence with  $\rot g_k\neq \rot g$. The proof is literally the same as in the case of irrational rotation number (Lemma \ref{lem-irr} above).

\item The subsequence with  $\rot g_k= \rot g=p/q$ such that  $g_{k}$ has parabolic cycles. Then $\bar \tau(g_{k}) = \rot(g_{k}) = \rot (g) = \bar \tau(g)$ due to the second subcase in Theorem \ref{th-XB_NG}, and the result follows.

\item The subsequence with  $\rot g_k= \rot g=p/q$ such that  $g_{k}$ has only hyperbolic cycles on $\bbR/\bbZ$, and some of these cycles approach parabolic cycles of $g$. Then the multipliers of these real hyperbolic cycles of $g_{k}$ tend to $1$ as $k\to \infty$. The result follows from Lemma \ref{lem-disc-radius} (applied to $g_{k}$), because the radius $R$ tends to zero.

\item The subsequence with  $\rot g_k= \rot g=p/q$ such that  $g_{k}$ has only hyperbolic cycles on $\bbR/\bbZ$, and these cycles are detached from parabolic cycles of $g$. This means that the hyperbolic cycles of $g_k$ tend to the hyperbolic cycles of $g$, while all parabolic cycles of $g$ bifurcate and disappear from the real line.
\end{enumerate}
This is the only non-trivial case. The consideration is analogous to that in \cite[Lemma 15]{XB_NG}, where the case $g_{\eps}=g+\eps$, $\eps\to 0$, is considered. We are going to prove that $\bar \tau(g_{k}) \to \bar \tau(g)=p/q$.  First, we prove that $q\bar \tau(g_{k}) \to 0$ in $\bbH/\bbZ$.

In the notation of Sec. \ref{sec-Buff}, set
\[\tilde r_j := \frac{\log \phi_j^{-1}(x_j)}{\log \rho_j}\quad\text{and}\quad  \tilde s_j := \frac{\log |\phi_j^{-1}(x_{j-1})|}{\log \rho_j}+\frac{i \pi}{|\log \rho_j|}.\]
Put
\begin{equation}
\label{eq-sigma}
\sigma(f) := \sum_{j\in \bbZ/2mq Z}\tilde s_j-\tilde r_j.\end{equation}

We will use the following statement, see \cite[Lemma 12]{XB_NG}:
\begin{lemma}\label{lemma-qctwist}
For any hyperbolic circle diffeomorphism $f$ with rotation number $p/q$, we have  that
\[\dist_{\bbH/\bbZ}\left(q\bar \tau(f),-\frac{1}{\sigma(f)}\right)\leq 5D_f
\]
\end{lemma}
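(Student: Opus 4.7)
My approach would be to compute the modulus $\bar\tau(f)$ --- or more precisely $q\bar\tau(f)\pmod 1$ --- by constructing an explicit quasiconformal uniformization of the complex torus $\mathcal E(f)$ from Section \ref{sec-Buff} in charts adapted to the periodic cycles, and then controlling the deviation from the ``naive'' M\"obius answer by the distortion $D_f$. The first step is to pass to the $q$-fold cover: the map $F^q-p$ has fixed (rather than periodic) points at every $\alpha_j$, and the complex torus built from the same curve $\gamma$ but with $f$ replaced by $f^q$ is an unbranched $q$-fold cover of $\mathcal E(f)$ in the ``vertical'' direction, so its modulus equals $q\bar\tau(f)$ modulo $\bbZ$, and it suffices to compute this. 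In each linearizing chart $\phi_j$ (in which $f^q$ acts as $z\mapsto \rho_j z$), I would then pass to the logarithmic chart $\psi_j(z):=\log(\phi_j^{-1}(z))/\log\rho_j$, which conjugates $f^q$ to the unit shift $w\mapsto w+1$ and has period $2\pi i/\log\rho_j$. In this chart the arc $C_j$ maps to a curve close to the segment joining $\tilde s_j$ to $\tilde r_j$: indeed, $\tilde r_j$ and $\tilde s_j$ were defined precisely as the logarithmic images of $x_j$ and $x_{j-1}$, with the summand $i\pi/|\log\rho_j|$ accounting for the branch of $\log$ on the negative real axis.

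The ``straight-line'' computation now goes as follows. If $f$ were a M\"obius transformation, the various logarithmic charts would glue rigidly along the arcs $\phi_j(C_j)$: the horizontal period of the resulting torus (one full traversal of $\gamma$) would equal the telescoping sum $\sigma(f)=\sum_j(\tilde s_j-\tilde r_j)$, while the vertical period (one application of $f^q$) would equal $1$. Normalizing so that the first generator has length $1$, the modulus becomes $-1/\sigma(f)$ modulo $\bbZ$, which would give the claim with $D_f$ replaced by $0$.

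The main obstacle, and the source of the $5D_f$ error term, is that $f$ is only analytic and not M\"obius, so the transitions between neighbouring logarithmic charts $\psi_j$ and $\psi_{j+1}$ are merely analytic maps close to translations on the intervals between $\alpha_j$ and $\alpha_{j+1}$. To handle this I would construct an explicit quasiconformal homeomorphism from $\mathcal E(F^q-p)$ to a model torus of modulus $-1/\sigma(f)$, supported on thin ``transition annuli'' between consecutive periodic points, whose Beltrami coefficient is bounded pointwise by the local nonlinearity $|f''/f'|$. The integral bound $\int_{\bbR/\bbZ}|f''(x)/f'(x)|\,dx = D_f$ then converts, via a Gr\"otzsch-type estimate for how quasiconformal maps distort moduli, into the claimed bound on $\dist_{\bbH/\bbZ}$; pinning down the explicit constant $5$ in this interpolation is the main technical point.
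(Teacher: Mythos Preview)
The paper does not give a self-contained proof of this lemma: it is quoted verbatim as \cite[Lemma~12]{XB_NG}, with only the one-line remark that ``the proof of this lemma in \cite{XB_NG} contains an explicit construction of a quasiconformal homeomorphism between $\mathcal E(f)$ and the standard complex torus.'' Your sketch is fully consistent with that description --- passing to the $q$-fold cover (so that $q\bar\tau(f)$ is the modulus of $\mathcal E(f^q)$), working in logarithmic linearizing coordinates $\log\phi_j^{-1}/\log\rho_j$ around each periodic point, and observing that $\tilde r_j,\tilde s_j$ are exactly the images of the endpoints $x_{j-1},x_j$ in those coordinates, so that the ``rigid'' model torus has periods $1$ and $\sigma(f)$ --- is indeed the skeleton of the argument in \cite{XB_NG}.

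The one place where your outline is genuinely incomplete is the final step. You write that the Beltrami coefficient of the interpolating map is ``bounded pointwise by the local nonlinearity $|f''/f'|$'' and then appeal to a ``Gr\"otzsch-type estimate'' to convert the $L^1$ bound $D_f=\int|f''/f'|$ into a hyperbolic-distance bound. Standard Gr\"otzsch/Teichm\"uller estimates control moduli via the $L^\infty$ norm of the Beltrami coefficient, not its $L^1$ norm, so this step as stated does not go through directly. In the actual proof in \cite{XB_NG} the point is rather that the transition between adjacent logarithmic charts $\psi_j$ and $\psi_{j+1}$ over the interval $(\alpha_j,\alpha_{j+1})$ differs from a translation by a map whose nonlinearity is controlled by the distortion of $f$ restricted to a fundamental domain for $f^q$ on that interval; summing over $j$ these contributions account for one full turn around the circle and so add up to $D_f$ (not $D_{f^q}$). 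The quasiconformal interpolation is then supported on thin strips of controlled modulus around each $x_j$, and the constant $5$ comes from an explicit length--area estimate there. Your sketch correctly identifies this as the main technical point but does not resolve it.
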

The proof of this lemma in \cite{XB_NG} contains an explicit construction of a quasiconformal homeomorphism between $\mathcal E(f)$ and the standard complex torus.

We apply this lemma to hyperbolic circle diffeomorphisms $g_{k}$. The corresponding notation for periodic points, their multipliers and linearizing charts of $g_{k}$ is $\alpha^{k}_j$, $\rho_j^{k}$, $\phi_j^{k}$; we  may and will assume that $x_j$ does not depend on $k$, and we define $\tilde r_j^{k}, \tilde s_j^{k}$, $\sigma(g_{k})$ as above.
The following lemma, together with Lemma \ref{lemma-qctwist}, implies $q\bar \tau(g_{k}) \to 0$.
\begin{lemma}
\label{lem-sigma}
 In assumptions of Lemma \ref{lem-parab} subcase (4), $\sigma(g_k)\to \infty$ as $k\to \infty$.
\end{lemma}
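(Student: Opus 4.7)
The approach is to split $\sigma(g_k)$ into real and imaginary parts, observe that the imaginary part stays bounded, and exhibit a single unbounded term in the real part.

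From the definitions of $\tilde s_j^k$ and $\tilde r_j^k$, one reads off
\[
\Im \sigma(g_k) \;=\; \sum_{j} \frac{\pi}{|\log \rho_j^k|},
\]
the sum running over real periodic points of $g_k$. In subcase (4), every real periodic cycle of $g_k$ converges to a hyperbolic cycle of $g$, so the number of terms stabilizes and each multiplier $\rho_j^k$ converges to a hyperbolic multiplier of $g$, bounded away from $1$. Hence $\Im \sigma(g_k)$ stays bounded, and it suffices to prove $|\Re \sigma(g_k)| \to \infty$.

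To locate a divergent contribution to $\Re \sigma(g_k)$, I use the assumption that the parabolic cycles of $g$ disappear from $\bbR$: there is at least one interval $(\alpha_{j^*}^k, \alpha_{j^*+1}^k)$ between consecutive real periodic points of $g_k$ whose interior contains a former parabolic fixed point $\beta$ of $g^q$. Assume $\alpha_{j^*}^k$ is attracting (the other case is analogous). Fix $x_{j^*}$ in the real interval $(\beta, \alpha_{j^*+1})$, independent of $k$; for $k$ large, $x_{j^*}$ lies in the immediate basin of $\alpha_{j^*}^k$ under $g_k^q$ as well as in that of $\alpha_{j^*+1}^k$. The key claim is that
\[
\bigl|(\phi_{j^*}^k)^{-1}(x_{j^*})\bigr| \to \infty \qquad \text{as } k\to\infty.
\]
Dynamically, the forward $g_k^q$-orbit of $x_{j^*}$ must traverse the parabolic bottleneck near $\beta$ before entering a fixed small neighborhood of $\alpha_{j^*}^k$, so the number of iterates required, $N_k$, tends to infinity. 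Since the Koenigs chart conjugates $g_k^q$ to multiplication by $\rho_{j^*}^k$, the orbit contracts by exactly $|\rho_{j^*}^k|^{N_k}$ in the Koenigs coordinate, forcing $|(\phi_{j^*}^k)^{-1}(x_{j^*})| \gtrsim |\rho_{j^*}^k|^{-N_k} \to \infty$. Equivalently, in the Koenigs chart of the limit $g$, parabolic boundary points of the immediate basin of $\alpha_{j^*}$ lie at infinity.

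For $j\ne j^*$, choose each $x_j$ in an interval of $g$ away from every former parabolic point; by continuous dependence of Koenigs charts on compact subsets of the basins, $\log(\phi_j^k)^{-1}(x_j)$ and $\log|(\phi_j^k)^{-1}(x_{j-1})|$ remain bounded. Likewise $(\phi_{j^*+1}^k)^{-1}(x_{j^*})$ stays bounded, since $x_{j^*}$ lies in a compact subset of the basin of $\alpha_{j^*+1}$ for $g$. Consequently the only unbounded contribution to $\Re \sigma(g_k)$ is $-\log(\phi_{j^*}^k)^{-1}(x_{j^*})/\log \rho_{j^*}^k$, which tends to $\pm\infty$ with a definite sign, so $|\Re \sigma(g_k)|\to\infty$ and the lemma follows. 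The main technical point is the divergence $(\phi_{j^*}^k)^{-1}(x_{j^*})\to\infty$; I would justify it rigorously via perturbed Fatou coordinates of $g^q$ near $\beta$ (as in \cite[Prop.~3.2.2]{Shi}), quantifying $N_k \to \infty$ uniformly, parallel to the one-parameter case treated in \cite[Lemma 15]{XB_NG}.
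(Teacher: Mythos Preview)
There is a genuine gap. Your argument hinges on the claim that, after a judicious choice of the $x_j$, the \emph{only} unbounded contribution to $\Re\sigma(g_k)$ is the single term $-\log(\phi_{j^*}^k)^{-1}(x_{j^*})/\log\rho_{j^*}^k$. This is false in general. Every arc $(\alpha_j,\alpha_{j+1})$ that contains a point of a parabolic cycle of $g$ forces exactly one of $\tilde r_j^k$ or $\tilde s_{j+1}^k$ to diverge, and no placement of $x_j$ avoids both: if $x_j$ sits on the $\alpha_j$-side of the parabolic point then $(\phi_{j+1}^k)^{-1}(x_j)\to\infty$, and if it sits on the $\alpha_{j+1}$-side then $(\phi_{j}^k)^{-1}(x_j)\to\infty$. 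Since a parabolic cycle of period $q$ visits $q$ distinct arcs, there are at least $q$ divergent summands, not one; your sentence ``for $j\ne j^*$, choose each $x_j$ \dots\ away from every former parabolic point'' cannot be carried out when $q>1$ or when there is more than one parabolic cycle.

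This matters because your argument never invokes either hypothesis of Lemma~\ref{lem-parab}: ``at most one parabolic cycle'' in part~(1), or monotonicity of $g_k$ in part~(2). If your proof were correct it would establish $\sigma(g_k)\to\infty$ for an arbitrary $g$ with parabolic cycles, and that is known to fail (see Remark~\ref{rem-discont}: for $g$ with two parabolic cycles the divergent terms can carry opposite signs and cancel). The paper's proof proceeds exactly by showing that each arc containing a parabolic point contributes a term $\Re(\tilde s_{j+1}^k-\tilde r_j^k)\to\pm\infty$, with sign determined by the side from which that parabolic point attracts; the hypothesis in~(1) (one parabolic cycle, hence all its points attract from the same side) or in~(2) (monotonicity forces all parabolic cycles to attract from the same side) is precisely what guarantees that all these divergent summands share a common sign, so that $\Re\sigma(g_k)\to\pm\infty$. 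Your bottleneck argument for why $(\phi_{j^*}^k)^{-1}(x_{j^*})\to\infty$ is fine and is essentially what the paper uses; what is missing is the accounting for all such terms and the sign analysis tied to the hypothesis.
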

\begin{proof}
Note that the description of subcase (4) implies that $\alpha_j^{k}$ tend to all real hyperbolic periodic points of $g$. Let $\alpha_j:=\lim_{k\to\infty} \alpha_j^{k}$ be these periodic points; note that parabolic periodic points of $g$ are not in the list $\{\alpha_j\}$.

The multipliers $\rho_j^{k}$ of the real hyperbolic cycles of $g_{k}$ tend to the multipliers of the real hyperbolic cycles of $g$, so the imaginary parts of $\tilde s_j^k$ have finite limits. Prove that $\Re \sigma(g_{k})$ tends to infinity.

If an arc $[\alpha_j, \alpha_{j+1}]$ does not contain a parabolic periodic point of $g$, then $\phi_j^{k}$, $\phi_{j+1}^{k}$ tend to the linearizing charts of $g^q$ at $\alpha_j, \alpha_{j+1}$ on the whole arc $[\alpha_j, \alpha_{j+1}]$, and the real parts of $\tilde r_j^{k}, \tilde s_{j+1}^{k}$ have finite limits.

Let $[\alpha_j, \alpha_{j+1}]$ be one of the arcs that contain parabolic cycles of $g$. We have
\begin{equation}
\label{eq-r-s}
 \Re(\tilde s^{k}_{j+1}-\tilde r^{k}_{j}) = \frac{\log |(\phi_{j+1}^{k})^{-1}(x_{j})|}{\log \rho^{k}_{j+1}}-\frac{\log (\phi_j^{k})^{-1}(x_j)}{\log \rho_j^{k}}.
\end{equation}
The limit of this quantity is  $+\infty$ if $\alpha_j $ attracts and $\alpha_{j+1}$ repels; otherwise, the limit is $-\infty$. Indeed, the denominators stay bounded as mentioned above;   $\phi^{k}_j$, $\phi^{k}_{j+1}$ tend to linearizing charts of $\alpha_j, \alpha_{j+1}$ in small neighborhoods of $\alpha_j, \alpha_{j+1}$ respectively, however we need more and more iterates of $g_{k}$ to get to these neighborhoods from $x_j$. So either both $\log |(\phi_{j+1}^{k})^{-1}(x_{j})|$, $\log (\phi_{j}^{k})^{-1}(x_{j})$ tend to $+\infty$, or one of them remains bounded and the other one tends to $+\infty$. If the parabolic cycle that visits $[a_j, a_{j+1}]$ attracts from the right, i.e. $\alpha_j$ attracts and $\alpha_{j+1}$ repels, then $\log \rho^{k}_j<0<\log \rho^{k}_{j+1}$, and $\Re(\tilde s^{k}_{j+1}-\tilde r^{k}_{j})  \to +\infty$ as $k\to \infty$. If this parabolic cycle attracts from the left,  the limit is $-\infty$.

Now, $\sigma(g_k)$ is a sum of several bounded summands and several summands that tend to $+\infty$ ($-\infty$).
The proof is finished differently for the two parts of Lemma \ref{lem-parab}.

(1) $g_0$ has at most one parabolic cycle. If this cycle attracts from the left, all unbounded summands in the sum for $\sigma(g_{k})$ tend to $-\infty$, so $\sigma(g_{k})\to -\infty$. If this cycle attracts from the right, $\sigma(g_{k})\to +\infty$.

(2) $g_{k}$ is monotonic, say decreases with $k$. Recall that all parabolic orbits of $g$  disappear from the real line. Thus all of them attract from the left. So  $\sigma(g_k)$ is a sum of several bounded summands and several summands that tend to $-\infty$. Hence  $\sigma(g_{k})\to -\infty$.
\end{proof}

Lemmas \ref{lemma-qctwist} and \ref{lem-sigma} imply that $q\bar \tau(g_{k}) \to 0$ in $\bbH/\bbZ$, because $D_{g_k}$ is bounded, $D_{g_k}\to D_g$.
Note that $\bar \tau(g_{k})$ is in the disc of radius $D_{g_{k}}/(4\pi q^2)$ that is tangent to the real line at $p/q$ (see the last subcase of Theorem \ref{th-XB_NG-1}); so $\bar \tau(g_{k})\to p/q = \bar \tau(g)$, q.e.d.

\begin{remark}
\label{rem-discont}
 If $g$ has two periodic cycles, several  summands in \eqref{eq-sigma} for $g_{k}$ still tend to infinity, but may have different signs and may compensate each other. So the statement of Lemma \ref{lem-sigma} might be wrong, see the example below. This is the only place in the proof of Theorem \ref{th-main} where we use that ${\mathbf K}+c $ has at most one parabolic cycle.

 The following example  was suggested by Yu.Ilyashenko.
 Let $g_{\eps}=g_{v_{\eps}}^1$ be the time-one flow of the vector field $v_{\eps}(x)=\sin 2\pi x (\cos^2 2\pi x+\eps)$. Then $\overline \tau (g_0)=0$ because $g_0$ has parabolic fixed points at $1/4, 3/4$. To compute $\overline \tau (g_\eps)=\tau(\mathcal E(g_{\eps}))=\tau(\Pi^{\eps}/g_{\eps})$, we consider the complex time along $v_{\eps}$ as a new coordinate in the strip $\Pi^{\eps}$. In this coordinate, $\mathcal E(g_{\eps})$ becomes the standard torus with generators $I_{\eps}=\int_{\gamma} \frac {dx}{v_{\eps}(x)}$ and $1$. So its modulus is $1/I_{\eps}$. Here $\gamma$ is a curve from the construction of $\mathcal E(g_0)$ that passes above the repellor $0$ and below the attractor $0.5$ of $g_0$.

 One can compute the integral above and get $I_{\eps}= (V.P.)\int \frac {dx}{v_{\eps}(x)}-\pi i Res_0 \frac{1}{v_{\eps}} + \pi i Res_{0.5} \frac{1}{v_{\eps}} $, c.v.  the computation in  \cite[Lemma 3.4]{YuIM}. The first summand is zero due to the symmetry $v_{\eps}(x)=-v_{\eps}(-x)$. The second and the third summands tend to $-\pi i \frac 1{v'(0)}+\pi i \frac 1 {v'(0.5)} =-i$ as $\eps\to 0$. So $\lim_{\eps\to 0} \overline \tau(g_{\eps})=1/(-i)=i$, and $\overline \tau(g_0) =0$ as mentioned above. Therefore $\overline \tau(\cdot)$ is not continuous at $g_0 = g_{v_0}^1$.
\end{remark}

\subsection{Hyperbolic diffeomorphisms (Lemma \ref{lem-hyp})}
We are going to reduce the statement to the following lemma. Informally, it means that close gluings produce close complex tori.
\begin{lemma}
\label{lem-glue-contin}

 Let $A$ be the annulus bounded by two analytic essential curves $\gamma_{1,2}$ in $\bbC/\bbZ$. Let $G_0\colon \gamma_1 \to \gamma_2$ be an analytic diffeomorphism, let analytic maps $G_{k} \colon \gamma_1 \to \bbC/\bbZ$, tend to $G_0$ uniformly in a small neighborhood of $\gamma_1$ as $k\to\infty$. Let $A_{\delta}$ be a small neighborhood of $A$.

 Let $\tau_{k}\in\bbH/\bbZ$ be the moduli of the complex tori $A_{\delta}/G_{k}$, i.e. we suppose that there exists a biholomorphism that takes $A_{\delta}/G_{k}$ to $\bbC/\bbZ+\tau_{k}\bbZ$ and maps the class of $\gamma_1$ to the class of $\bbR/\bbZ$.

 Then $\lim_{k\to \infty} \tau_k =\tau_0$.
 \end{lemma}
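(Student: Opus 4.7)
The plan is to build, for each large $k$, a quasiconformal self-map of (a neighborhood of) $A$ that intertwines the two gluings, descends to a homeomorphism $\Phi_k\colon A_\delta/G_0 \to A_\delta/G_k$ preserving the first generator, and whose Beltrami coefficient tends to $0$ in $L^\infty$. Standard continuity of the modulus under quasiconformal deformations will then give $\tau_k \to \tau_0$.

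Concretely, since the $G_k$ are analytic and converge to $G_0$ uniformly in a complex neighborhood of $\gamma_1$, Cauchy's estimates yield convergence in every $C^r$-norm there. Hence the analytic maps $h_k := G_k \circ G_0^{-1}$, defined in some complex neighborhood $V$ of $\gamma_2$, satisfy $h_k \to \mathrm{id}$ in $C^1(V)$. Choose a thin tubular neighborhood $U \Subset V \cap A_\delta$ of $\gamma_2$ disjoint from $\gamma_1$, and a smooth cut-off $\chi\colon A_\delta \to [0,1]$ equal to $1$ near $\gamma_2$ and to $0$ outside $U$. Define
\[
\phi_k(z) := z + \chi(z)\bigl(h_k(z) - z\bigr) \text{ on } U, \qquad \phi_k := \mathrm{id} \text{ on } A_\delta \setminus U.
\]
For $k$ large, $\phi_k$ is a smooth diffeomorphism of $A_\delta$ onto a neighborhood of $A$; it equals the identity near $\gamma_1$ and equals $h_k$ near $\gamma_2$, so $\phi_k \circ G_0 = G_k$ on $\gamma_1$. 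Thus $\phi_k$ descends to a homeomorphism $\Phi_k\colon A_\delta/G_0 \to A_\delta/G_k$ that fixes the class of $\gamma_1$, hence respects the first distinguished generator.

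The Beltrami coefficient $\mu_k := (\phi_k)_{\bar z}/(\phi_k)_z$ is supported in $U$, and since $\phi_k - \mathrm{id} = \chi\cdot(h_k - \mathrm{id})$ with $\chi$ smooth and fixed, we obtain $\|\mu_k\|_\infty = O\bigl(\|h_k - \mathrm{id}\|_{C^1(V)}\bigr) \to 0$. Let $\pi_k\colon \bbC/(\bbZ + \tau_k\bbZ) \to A_\delta/G_k$ be the uniformization sending the class of $\bbR/\bbZ$ to the class of $\gamma_1$. Then $\pi_k^{-1} \circ \Phi_k \circ \pi_0$ is a quasiconformal homeomorphism of tori with Beltrami coefficient of $L^\infty$-norm tending to zero, preserving the first generator. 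Lifting to $\bbC$ and normalizing so that $0 \mapsto 0$, we obtain quasiconformal solutions of the Beltrami equation with $\mu \to 0$; by the measurable Riemann mapping theorem these solutions converge, uniformly on compact sets, to the identity. Since the lifted map sends $\bbZ + \tau_0\bbZ$ to $\bbZ + \tau_k\bbZ$ and commutes with $z\mapsto z+1$, this forces $\tau_k \to \tau_0$.

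The main technical point is ensuring that the quasiconformal $\Phi_k$ is really globally defined on $A_\delta$ and respects the first generator, so that the standard Ahlfors--Bers continuity statement can be applied in the correct normalization; the $C^1$-smallness of $h_k - \mathrm{id}$ (granted for free by analyticity and Cauchy's estimates) makes the Beltrami-coefficient bound essentially automatic.
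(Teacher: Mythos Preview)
Your proof is correct and follows essentially the same strategy as the paper's: build a quasiconformal map interpolating between the identity and $h_k = G_k\circ G_0^{-1}$ via a cutoff, so that the Beltrami coefficient tends to zero and the moduli converge. The only cosmetic difference is that the paper first uniformizes $A$ to a straight annulus $\{0\le \Im z\le \mu\}$ so that the cutoff can be taken as a function of $\Im z$ alone and the derivatives computed explicitly, whereas you work directly with a bump function supported near $\gamma_2$.
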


\subsubsection{Reduction to Lemma \ref{lem-glue-contin}}
Choose the curve $\gamma\in \bbC/\bbZ$  as in Sec. \ref{sec-Buff}, and let $\Pi$ be the annulus between $\gamma$ and $g(\gamma)$. Let $\Pi_\delta$ be a neighborhood of $\Pi$ where both $g$ and $g_{k}$ are  univalent for large  $k$. Then $\mathcal E(g) = \Pi^{\delta}/g$ and $\mathcal E(g_{k})=\Pi^{\delta}/g_{k}$. So we may use Lemma \ref{lem-glue-contin} for $g, g_{k}$ playing the role of $G,G_{k}$. Formally, we may not take $\gamma_1=\gamma$ because $\gamma$ may be non-analytic; we should take $\gamma_1$ to be any analytic curve sufficiently close to $\gamma$ such that $\gamma_1\subset \Pi^{\delta}$ and $\gamma_2:=g(\gamma_1)\subset \Pi^{\delta}$.

Lemma \ref{lem-glue-contin} implies that $\bar \tau(g_{k})\to \bar \tau(g_{k})$ q.e.d.

\subsubsection{Proof of Lemma \ref{lem-glue-contin}}
Suppose that the annulus $A$ has modulus $\mu$. Let $A(\mu) : = \{z \in \bbC/\bbZ \mid 0 \le \Im z \le \mu \}$, then there exists a biholomorphism $\Phi \colon A(\mu) \to A$. Note that $\Phi$ extends analytically to a neighborhood of $A(\mu)$ because the boundaries of $A$ are analytic curves. Now letting $\hat G_{k}:=\Phi \circ G_{k} \circ \Phi^{-1}$ and $\hat G:=\Phi \circ G \circ \Phi^{-1}$ reduces the general case to the case $\gamma_1=\bbR/\bbZ$, $\gamma_2=\bbR/\bbZ+i\mu$. Below we only consider this case.

Our goal is to construct a quasiconformal homeomorphism $H$ that takes $A_\delta/G$ to $A_\delta/G_{k}$ and the class of $\bbR/\bbZ$ to itself, and to show that its quasiconformal dilatation $\left|\frac{H_{\bar z}}{H_z}\right|$ tends to zero uniformly in $A_\delta/G$ as $k\to \infty $.

Put $\xi (z)= G_{k}\circ G^{-1}$. For small $\delta$, for sufficiently large $k$, this map is well-defined in a $\delta$-neighborhood of $\bbR/\bbZ+i\mu$, and tends to identity uniformly within this neighborhood. Let $s\colon [0, \mu]\to [0,1]$ be a $C^2$-smooth monotonic map such that $s=1$ except for $[\mu-\delta, \mu+\delta]$ and $s=0$ in $(\mu-\delta/2, \mu+\delta)$. The estimate on $s'$ will depend on $\mu$ and $\delta$ only.  The required quasiconformal map is induced by
$$H(x,y) = s(y) (x+iy) + (1-s(y)) \xi(x+iy).$$
Indeed, $H$ induces the map between $A^{\delta}/G$ and $A^{\delta}/G_{k}$ because near the lower boundary of $A^{\delta}$, $H$ is identical, and near the upper boundary, it equals $\xi$. Outside the  $\delta$-neighborhood of  $\bbR/\bbZ+i\mu$, $H$ is identical and has quasiconformal dilatation equal to $0$. Inside this neighborhood, we have
\begin{align*}
H(x,y) &= \xi(x+iy)  + s(y) (x+iy - \xi(x+iy) );\\
\frac{\partial H}{\partial x} &= \xi'(x+iy) + s(y) (1-\xi'(x+iy));\\
  \frac{\partial H}{\partial y} &= i\xi'(x+iy)  + is(y) (1 - \xi'(x+iy))+ s'(y) (x+iy - \xi(x+iy)) ;\\
 2\frac{\partial H}{\partial \bar z} &=   \frac{\partial H}{\partial x}+i\frac{\partial H}{\partial y} = is'(y)(x+iy - \xi(x+iy)) ;\\
 2\frac{\partial H}{\partial z} &=    \frac{\partial H}{\partial x}-i\frac{\partial H}{\partial y} = 2\xi'(x+iy) + 2s(y) \cdot (1-\xi'(x+iy)) - is'(y)(x+iy - \xi(x+iy)).
\end{align*}
Note that $(x+iy - \xi(x+iy))$ tends to zero uniformly in the $\delta$-neighborhood of $\bbR/\bbZ+i\mu$ as $k\to \infty$, and the bound on $s'$ does not depend on $k$. So  $\frac{\partial H}{\partial \bar z}$ tends to zero uniformly in the strip $\Pi_{\delta}$.
This also shows that $\frac{\partial H}{\partial z}$ is uniformly close to $\xi'(x+iy) + s(y) \cdot (1-\xi'(x+iy))$ which is between $1$ and $\xi'(x+iy)$. Since $\xi'(x+iy)$ uniformly tends to $1$, we conclude that $\frac{\partial H}{\partial z}$ is uniformly close to $1$.

Finally, the quasiconformal dilatation of $H$ is uniformly close to $0$. This implies that $H$ is a homeomorphism, and shows that the modulus of $A^{\delta}/G_{k}$ tends to the modulus of  $A^{\delta}/G_{k}$  as $k\to \infty$.


\printbibliography
\end{document}